\newif\ifLTEX
\newcounter{signcmdcounter}
\newtheorem{thm}{Theorem}[section]
\newtheorem{lem}[thm]{Lemma}
\newtheorem{cor}[thm]{Corollary}
\newtheorem{prop}[thm]{Proposition}
\theoremstyle{definition}
\newtheorem{conv}[thm]{Convention}
\newcommand{\C}{\mathbb{C}}
\newcommand{\Z}{\mathbb{Z}}
\newcommand{\D}{\mathbb{D}}
\newcommand{\Hy}{\mathbb{H}^2}
\newcommand{\M}{\mathcal{M}}
\newcommand{\F}{\mathcal{F}}
\newcommand{\DC}{DC~presentation}
\newcommand{\refline}{reflection curve}
\newcommand{\equiva}{topologically conjugate }
\newcommand{\topconj}{topologically conjugate }
\newcommand{\pmsign}[1]{%
    \ifthenelse{\equal{#1}{p}}{%
        +%
    }{\ifthenelse{\equal{#1}{m}}{%
        -%
    }{%
    \pm%
}}}
\newcommand{\repeatsyms}[2]{%
    \ifthenelse{\equal{#1}{0}}{%
        -%
    }{\ifthenelse{\equal{#1}{1}}{%
        #2%
    }{\ifthenelse{#1 > 1}{%
        \setcounter{signcmdcounter}{#1}
        \whiledo{\value{signcmdcounter} > 1}{%
            #2, %
            \addtocounter{signcmdcounter}{-1}
        }%
        #2%
    }{%
}}}}
\newcommand{\NEC}[4]{(#1, \pmsign{#2}, [\repeatsyms{#3}{2}], \{ \repeatsyms{#4}{(-)} \})}
\title[DC presentations for involutions]{Dehn twist--crosscap slide presentations for involutions on non-orientable surfaces of genera up to 5}
\author[G.~Omori]{Genki Omori}
\address{
(Genki Omori)
Department of Mathematics, Faculty of Science and Technology, Tokyo University of Science, 2641 Yamazaki, Noda-shi, Chiba, 278-8510 Japan
}
\email{omori\_genki@rs.tus.ac.jp}
\author[N.~Sakata]{Naoki Sakata}
\address{
(Naoki Sakata)
Department of Physics, Ochanomizu University, 2-1-1, Otsuka, Bunkyo-ku, Tokyo, 112-8610 Japan
}
\email{sakata@casis.sakura.ne.jp}
\subjclass[2010]{57S05, 57M07, 57M05, 20F05}
\keywords{Dehn twist--crosscap slide presentation, non-orientable surface, involution, NEC group, NSK-map}
\date{\today}
\begin{document}
\maketitle
\begin{abstract}
We give Dehn twist--crosscap slide presentations for involutions on non-orientable surfaces of genera up to 5.
\end{abstract}

\section{Introduction}

\subsection{Background}

Let $S$ be a connected closed surface.  
A self-homeomorphism $\varphi $ on $S$ is a \textit{periodic map} on $S$ if there exists a positive integer $n$ such that $\varphi ^n=\textrm{id}_S$.   
In particular, when $n=2$ in the definition above, we also call $\varphi $ an \textit{involution} on $S$.
For self-homeomorphisms $\varphi $ on $S$ and $\varphi ^\prime $ on a surface $S^\prime $, $\varphi $ is \textit{\topconj }to $\varphi ^\prime $ if there exists a homeomorphism $F\colon S\to S^\prime $ such that $\varphi ^\prime =F\circ \varphi \circ F^{-1}$, namely, the following diagram commutes: 
\[
\xymatrix{
S \ar[r]^{\varphi } \ar[d]_F &  S \ar[d]^F \\
S^\prime  \ar[r]_{\varphi ^\prime } &S^\prime .\ar@{}[lu]|{\circlearrowright}
}
\]

In the case where $S$ is an orientable surface, Dehn~\cite{Dehn} showed that any orientation-preserving self-homeomorphism $\varphi $ on $S$ is isotopic to a product of Dehn twists. 
We call such a product a \textit{Dehn twist presentation} for $\varphi $. 
Since the conjugation of a Dehn twist is also a Dehn twist, one Dehn twist presentation for a self-homeomorphism $\varphi $ on $S$ gives a Dehn twist presentation for any homeomorphism which is \topconj to $\varphi $. 
Nielsen~\cites{Nielsen, Nielsen2} classified topological conjugacy classes of orientation-preserving periodic maps on oriented surfaces and proved that any orientation-preserving periodic map on an oriented surface $S$ with negative Euler characteristic is realized as an isometry for some hyperbolic metric on $S$. 

Let $\Sigma _{g}$ be a connected closed oriented surface of genus $g\geq 0$. 
Birman-Hilden~\cite{Birman-Hilden} gave a specific Dehn twist presentation for the hyperelliptic involution on $\Sigma _g$.  
Matsumoto~\cite{Matsumoto} gave a Dehn twist presentation for a certain involution on $\Sigma _2$.
Korkmaz~\cite{Korkmaz1} and Gurtas~\cite{Gurtas} generalized Matsumoto's Dehn twist presentation for higher genus cases. 
Ishizaka~\cite{Ishizaka} gave a list of topological conjugacy classes of orientation-preserving periodic maps on $\Sigma _g$ which commute with the hyperelliptic involution and constructed their Dehn twist presentations. 
For the other cases that $g\leq 4$, Hirose~\cite{Hirose} gave an explicit list of all topological conjugacy classes of orientation-preserving periodic maps on $\Sigma _g$ and constructed Dehn twist presentations for their periodic maps.
The first author~\cite{Omori} gave a Dehn twist presentation for a periodic map which is a generalization of the hyperelliptic involution and is called the balanced superelliptic rotation. 

There is an application of constructing a Dehn twist presentation for a periodic map to a Lefschetz fibration, which is a fiber structure deeply related to a symplectic 4-manifold. 
The {\it mapping class group} $\M (\Sigma _{g})$ of $\Sigma _{g}$ is the group of isotopy classes of orientation-preserving self-homeomorphisms on $\Sigma _{g}$. 
For a given relation among right-handed Dehn twists in $\M (\Sigma _{g})$, we can construct a Lefschetz fibration over $\Sigma _0$ with a regular fiber $\Sigma _g$.
Since a some power of a Dehn twist presentation for a periodic map on $\Sigma _g$ is a relation in $\M (\Sigma _{g})$ among Dehn twists, a Dehn twist presentation for a periodic map by right-handed Dehn twists gives a Lefschetz fibration. 
Endo-Nagami~\cite{Endo-Nagami} gave a signature formula for Lefschetz fibrations over $\Sigma _0$ that is calculated from a corresponding relation among right-handed Dehn twists in $\M (\Sigma _{g})$. 

In the case where $S$ is a non-orientable surface, Lickorish~\cite{Lickorish1} showed that any self-homeomorphism $\varphi $ of $S$ is isotopic to a product of Dehn twists and ``crosscap slides.'' 
We call such a product a \textit{Dehn twist--crosscap slide presentation} for $\varphi $ (we often abbreviate it to \textit{\DC }).

Let $N_{g}$ be the connected closed non-orientable surface of genus $g\geq 1$ as in Figure~\ref{scc_closed_nonorisurf}, that is a connected sum of $g$ real projective planes.  
The $\times $-marks in Figure~\ref{scc_closed_nonorisurf} means the identification of the antipodal points of the boundary components, and in this paper, we use the model of $N_g$ as the surface in Figure~\ref{scc_closed_nonorisurf}. 
We remark that a neighborhood of the $\times $-mark in $\mathcal{N}$ as in Figure~\ref{scc_closed_nonorisurf} is homeomorphic to a M\"{o}bius band.
We call a M\"{o}bius band in a suface a \textit{crosscap} in it. 
A list of topological conjugacy classes of involutions on $N_g$ which are realized as isometries for some hyperbolic metric on $N_g$ is given by Bujalance-Etayo-Mart\'{i}nez-Szepietowski~\cite{BEMS} for $g=4,\ 5$. 
Remark that one \DC \ for a self-homeomorphism $\varphi $ on $N_g$ also gives a \DC \ for any homeomorphism which is \topconj to $\varphi $. 
Dugger~\cite{Dugger} gave a list of topological  conjugacy classes of all involutions on $N_g$ for $2\leq g\leq 7$ by using equivariant surgeries. 
For the case of non-orientable surfaces, the Nielsen realization theorem was given by Evans-Kolbe~\cite{Evans-Kolbe}. 
In particular, by their result, any periodic map on $N_g$ is realized as an isometry on $N_g$ for some hyperbolic metric on $N_g$. 
\begin{figure}[ht]
\includegraphics[scale=0.8]{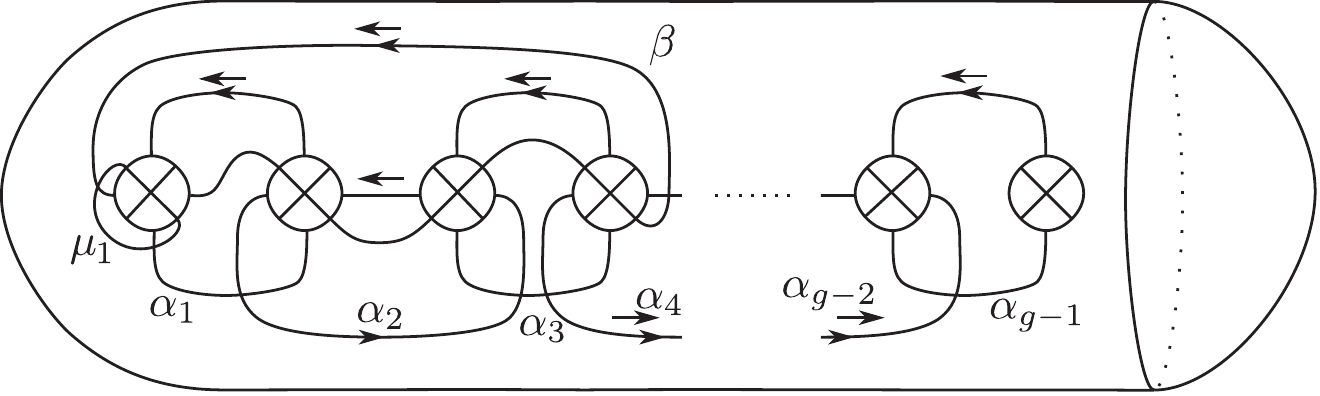}
\caption{The surface $N_g$ and simple closed curves $\mu _1,$ $\alpha _1, \dots , \alpha _{g-1},$ and $\beta $  on $N_g$.}\label{scc_closed_nonorisurf}
\end{figure}

Stukow~\cite{Stukow} gave a \DC \ for the hyperelliptic involution on $N_g$. 
The {\it mapping class group} $\M (N_{g})$ of $N_{g}$ is the group of isotopy classes of all self-homeomorphisms on $N_{g}$. 
In this paper, we give \DC s for all involutions on $N_g$ for $2\leq g\leq 5$ with respect to the generating set for $\M (N_g)$ given by Lickorish~\cite{Lickorish1} for $g=2$, by Birman-Chillingworth~\cite{Birman-Chillingworth} for $g=3$, and by Szepietowski~\cite{Szepietowski2} for $g\geq 4$ (Theorem~\ref{gen_mcg}).

\subsection{Main result}\label{section_main-result}

Topological conjugacy classes of involutions on $N_g$ for $2\leq g\leq 7$ are listed by Dugger~\cite{Dugger}. 
By Evans-Kolbe's result in~\cite{Evans-Kolbe}, these topological conjugacy classes are represented by isometries. 
As an argument in Section~\ref{section_NEC-group}, an involution on $N_g$ 
is corresponding to an ``NSK-map'' of genus $g$, where an NSK-map $\theta \colon \Gamma \to \Z _2=\Z /2\Z$ is a surjective homomorphism from an NEC group $\Gamma $ whose kernel is isomorphic to the fundamental group of $N_g$. 
In Table~\ref{table:NSKmap}, we list a representative for each equivalence class (which are also called a topological conjugacy class) of NSK-maps of genus $2\leq g\leq 5$. 
We remark that the genus 4 and 5 cases of Table~\ref{table:NSKmap} are given by Bujalance-Etayo-Mart\'{i}nez-Szepietowski~\cite{BEMS}. 
By Proposition~\ref{prop_topconj_inv-NSK}, the topological conjugacy relation on NSK-maps of genus $g$ is compatible with that on involutions on $N_g$. 
Therefore, to construct \DC s\ for involutions on $N_g$ with $2 \leq g \leq 5$, it is sufficient to give a \DC \ for the involution corresponding to each NSK-map listed in Table~\ref{table:NSKmap}.

Let $\iota _{g;s}$ and $\iota _{g;s,t}$ be involutions on $N_g$ for $2\leq g\leq 5$ which are  described as in Figures~\ref{figure_involution_genus2}, \ref{figure_involution_genus3}, \ref{figure_involution_genus4}, and \ref{figure_involution_genus5}. 
The words ``ref.'', ``$\pi $'', and ``anti.'' in the figures mean that the corresponding involution is induced by the reflection about the gray plane in the figure, the $\pi $-rotation about the black axis in the figure, and the antipodal action, respectively. 
The gray points and gray simple closed curves on surfaces in Figures~\ref{figure_involution_genus2}, \ref{figure_involution_genus3}, \ref{figure_involution_genus4}, and \ref{figure_involution_genus5} indicate isolated fixed points and \refline s of the corresponding involution, respectively (a \textit{\refline } of an involution $\iota $ on a surface means a simple closed curve on the surface which is fixed by $\iota $ pointwise). 
An NSK-map of genus $g$ which is listed in Table~\ref{table:NSKmap} is also labeled by a pair $(g;s)$ or a tuple $(g;s,t)$ of positive integers $g$, $s$, and $t$ as $\theta =\theta _{g;s}$ or $\theta =\theta _{g;s,t}$. 
By Proposition~\ref{prop_topconj}, an involution which is corresponding to $\theta _{g;s}$ (resp. $\theta _{g;s,t}$) is \topconj to $\iota _{g;s}$ (resp. $\iota _{g;s,t}$). 

\begin{figure}[ht]
\includegraphics[scale=0.7]{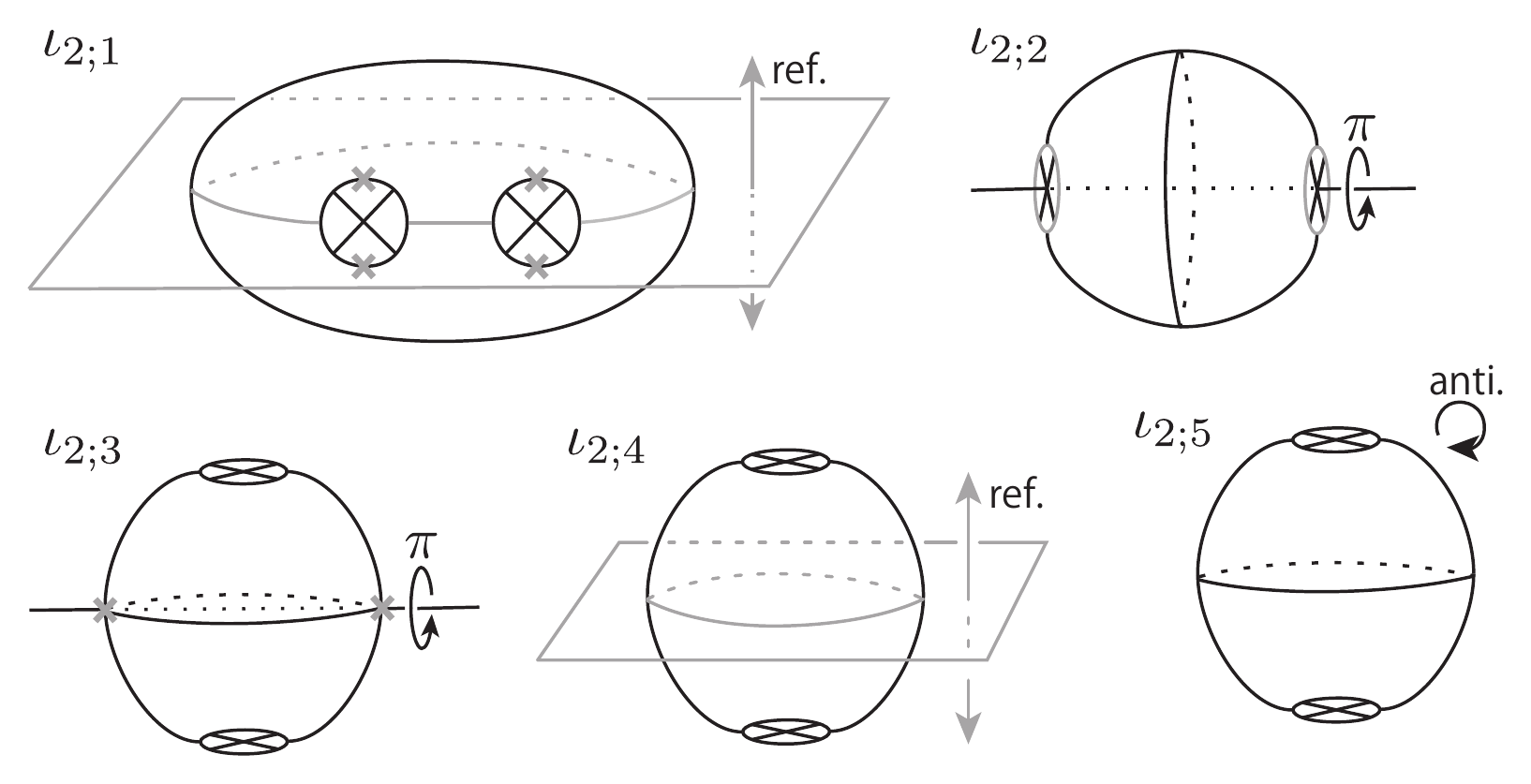}
\caption{A list of representatives of topological conjugacy classes of non-trivial involutions on genus 2 non-orientable surfaces.}\label{figure_involution_genus2}
\end{figure}

\begin{figure}[ht]
\includegraphics[scale=0.7]{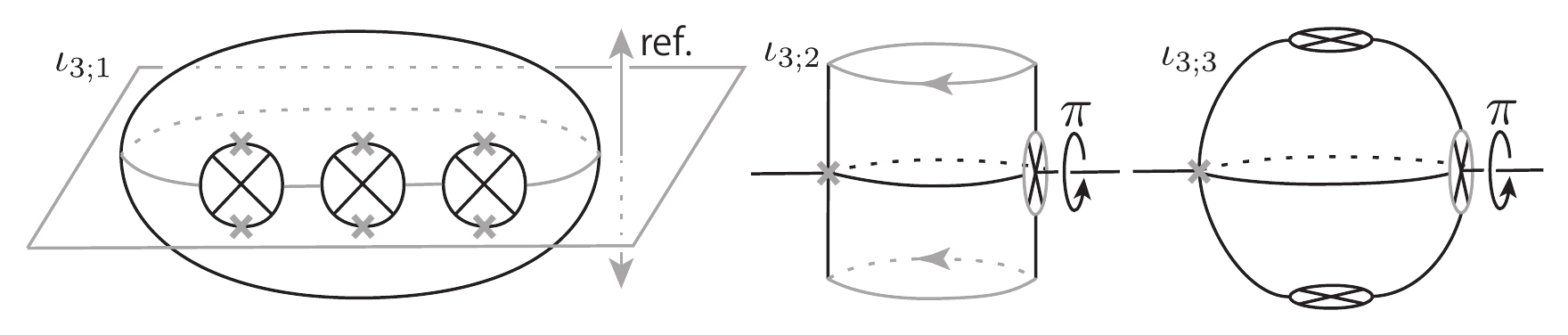}
\caption{A list of representatives of topological conjugacy classes of non-trivial involutions on genus 3 non-orientable surfaces.}\label{figure_involution_genus3}
\end{figure}

\begin{figure}[ht]
\includegraphics[scale=0.68]{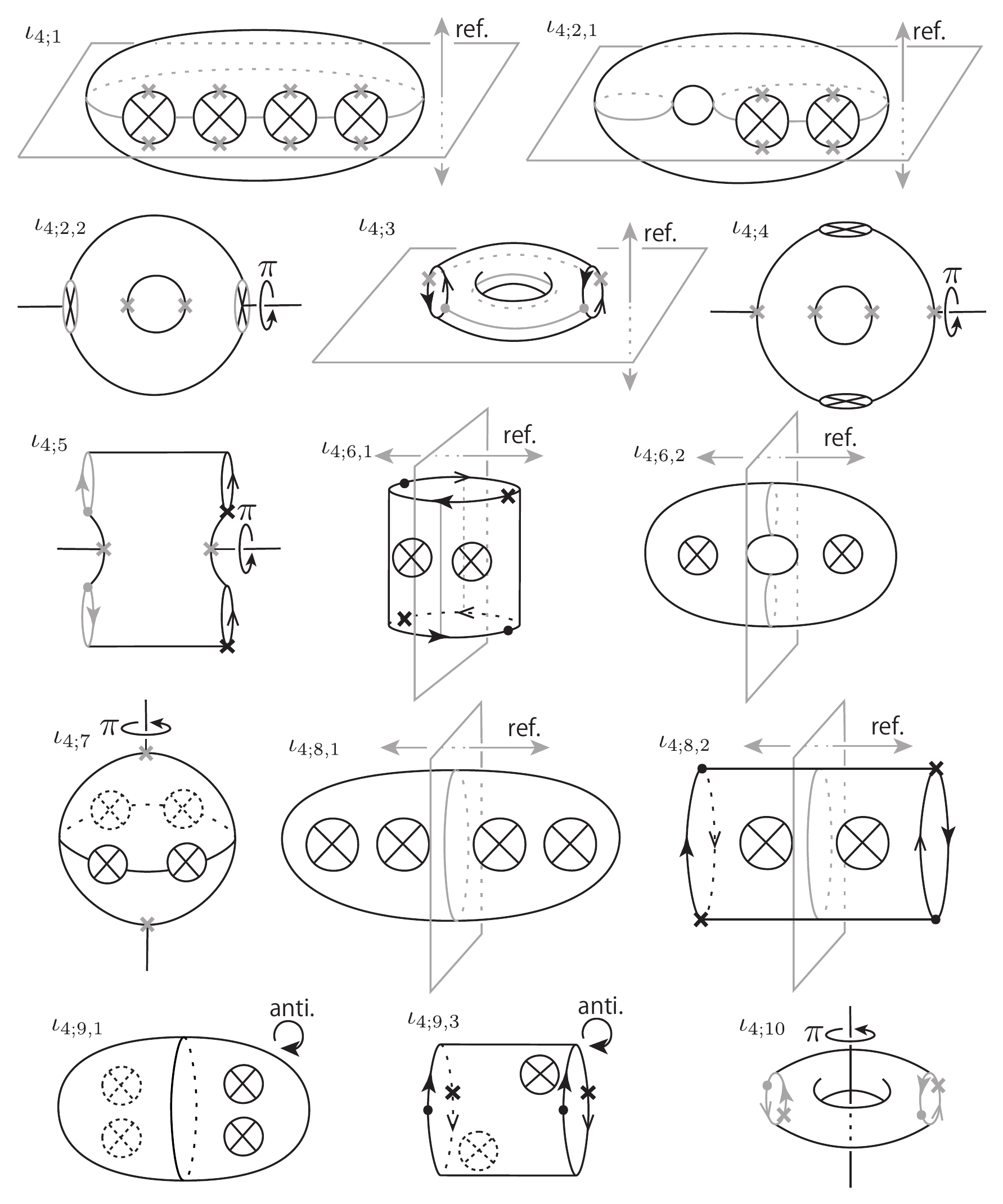}
\caption{A list of representatives of topological conjugacy classes of non-trivial involutions on genus 4 non-orientable surfaces.}\label{figure_involution_genus4}
\end{figure}

\begin{figure}[ht]
\includegraphics[scale=0.68]{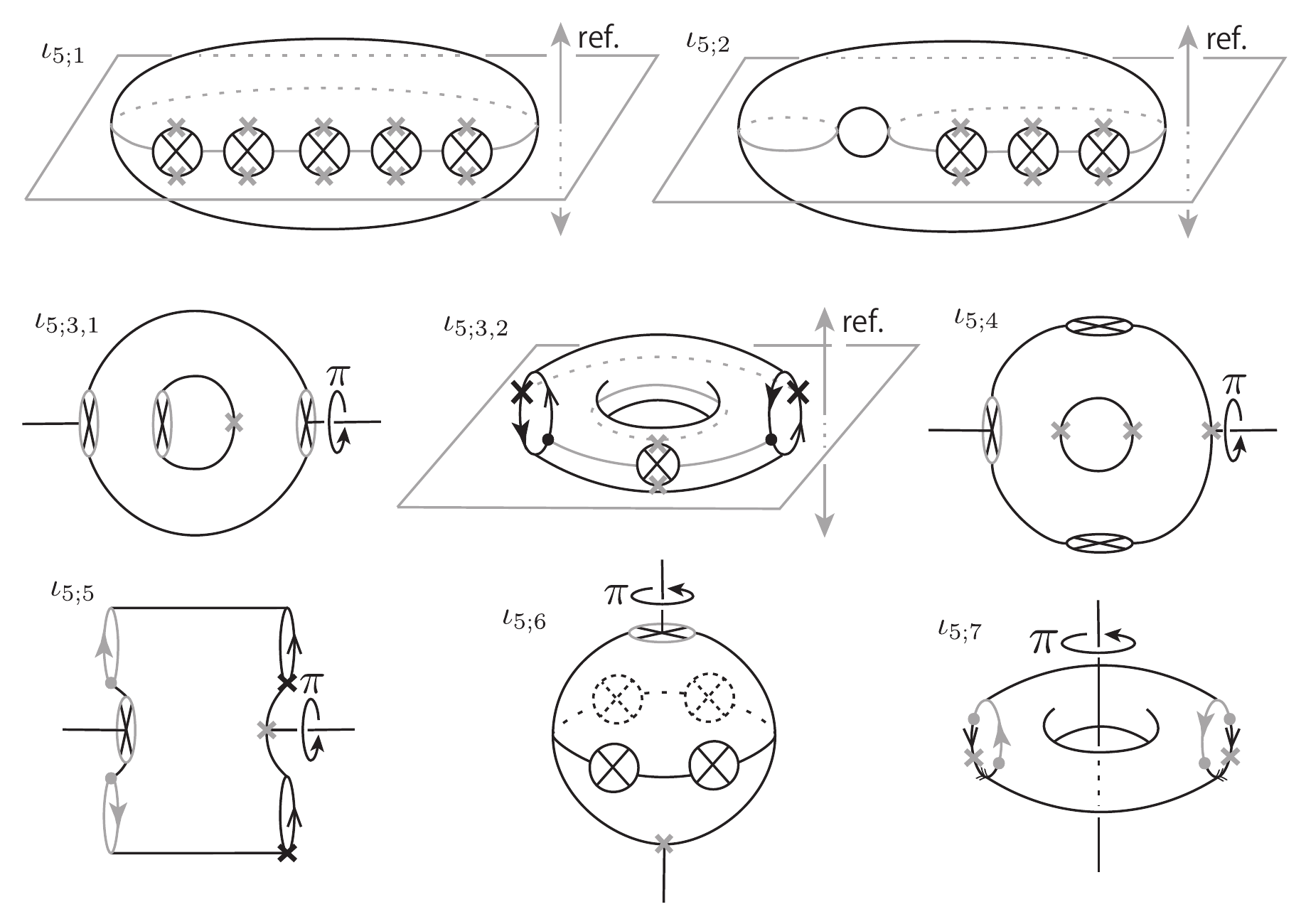}
\caption{A list of representatives of topological conjugacy classes of non-trivial involutions on genus 5 non-orientable surfaces.}\label{figure_involution_genus5}
\end{figure}

For a two-sided simple closed curve $\gamma $ on a closed surface $S$, we choose an orientation of a regular neighborhood of $\gamma $ in $S$. Then denote by $t_\gamma $ the right-handed Dehn twist along $\gamma $ with respect to the chosen orientation. In particular, for a given explicit two-sided simple closed curve, an arrow on a side of the simple closed curve indicates the positive direction of the Dehn twist (see Figure~\ref{scc_closed_nonorisurf} and \ref{dehntwist}).

\begin{figure}[ht]
\includegraphics[scale=0.7]{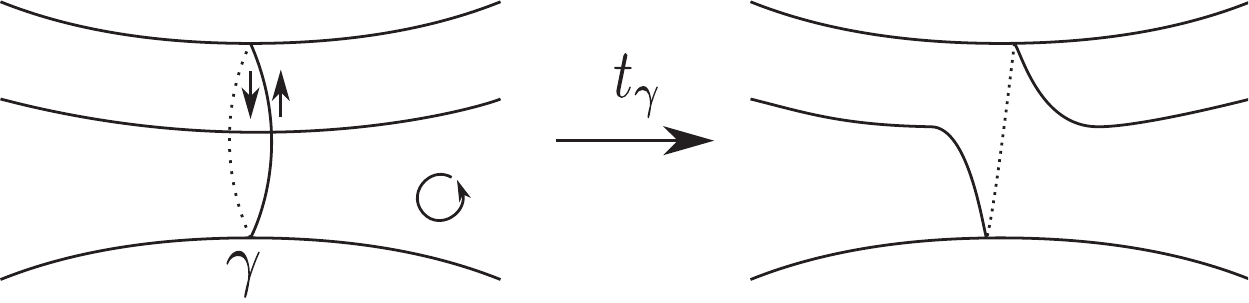}
\caption{The right-handed Dehn twist $t_\gamma $ along a two-sided simple closed curve $\gamma $ on $S$.}\label{dehntwist}
\end{figure}
 
Let $\mu $ be a one-sided simple closed curve on a closed non-orientable surface $N$ and $\alpha $ a two-sided simple closed curve on $N$ such that $\mu $ and $\alpha $ intersect transversely at one point. 
A regular neighborhood $\mathcal{N}$ of the union $\mu \cup \alpha $ in $N$ is a non-orientable surface of genus 2 with one boundary component (see Figure~\ref{crosscap_slide}). 
Recall that a neighborhood of $\mu $ is a crosscap in $\mathcal{N}$. 
Then, we denote by $Y_{\mu , \alpha }$ a self-homeomorphism on $N$ which is described as the result of pushing 
once the crosscap along $\alpha $ as in Figure~\ref{crosscap_slide}. 
We call $Y_{\mu , \alpha }$ a {\it crosscap slide} about $\mu $ and $\alpha $.

\begin{figure}[ht]
\includegraphics[scale=1.3]{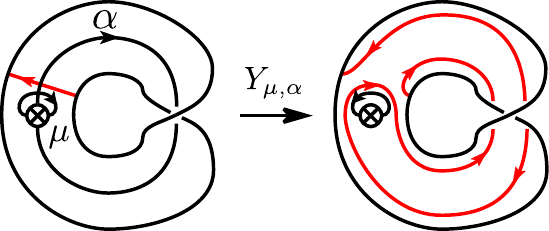}
\caption{The crosscap slide $Y_{\mu ,\alpha }$ about simple closed curves $\mu $ and $\alpha $ on $N$.}\label{crosscap_slide}
\end{figure}

For a non-orientable surface $N$ of genus $g$ and a homeomorphism $F \colon N_g\to N$, we remark that the composition $F\circ t_{\gamma }\circ F^{-1}$ (resp. $F\circ Y_{\mu ,\alpha }\circ F^{-1}$) coincides with the Dehn twist $t_{F(\gamma )}^\varepsilon $ (resp. the crosscap slide $Y_{F(\mu ), F(\alpha )}$) on $N$, where $\varepsilon \in \{ \pm1 \}$. 
Thus, if a \DC \ for one representative of the topological conjugacy class $[\iota ]$ of an involution $\iota $ on $N_g$ is given, then we obtain a \DC \ for any $\iota ^\prime \in [\iota ]$.  
Hence we also call a \DC \ for an involution $\iota $ on $N_g$ the \textit{\DC } for the topological conjugacy class $[\iota ]$, namely, for a product $w$ of Dehn twists and crosscap slides, $[\iota ]=w$ means that there exists a representative $\iota ^\prime \in [\iota ]$ such that $\iota ^\prime $ is isotopic to $w$. 
 
Let $\mu _1,\ \alpha _1,\ \dots ,\ \alpha _{g-1},\ \beta $ be simple closed curves on $N_g$ as in Figure~\ref{scc_closed_nonorisurf}. 
The following theorem is given by Lickorish~\cite{Lickorish1} for $g=2$, by Birman-Chillingworth~\cite{Birman-Chillingworth} for $g=3$, and by Szepietowski~\cite{Szepietowski2} for $g\geq 4$. 

\begin{thm}[\cites{Lickorish1, Birman-Chillingworth, Szepietowski2}]\label{gen_mcg}
The mapping class group $\M (N_g)$ is generated by $t_{\alpha _1}, \dots , t_{\alpha _{g-1}}$, and $Y_{\mu_1,\alpha _1}$ for $g\in \{ 2,3\}$, and by $t_{\alpha _1}, \dots , t_{\alpha _{g-1}}$, $Y_{\mu_1,\alpha _1}$, and $t_\beta $ for $g\geq 4$.
\end{thm}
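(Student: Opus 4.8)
The plan is to follow the arguments of Lickorish, Birman--Chillingworth, and Szepietowski, the point being to cut an a priori infinite generating set down to the explicit finite one above. Recall first (Lickorish's theorem, \cite{Lickorish1}) that $\M(N_g)$ is generated by the Dehn twists along all two-sided simple closed curves together with a single crosscap slide. The reduction rests on the fact that conjugation is geometric: for $h\in\M(N_g)$ one has $h\,t_\gamma\,h^{-1}=t_{h(\gamma)}^{\pm 1}$ and $h\,Y_{\mu,\alpha}\,h^{-1}=Y_{h(\mu),h(\alpha)}$. Write $\Gamma$ for the subgroup generated by the elements in the statement. Since $Y_{\mu_1,\alpha_1}\in\Gamma$ already supplies the required crosscap slide, by Lickorish's theorem it suffices to prove that $\Gamma$ contains the twist subgroup $\mathcal T(N_g)$, i.e.\ the subgroup generated by \emph{all} Dehn twists along two-sided curves.

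To show $\mathcal T(N_g)\subseteq\Gamma$ one uses a change-of-coordinates principle on $N_g$: the twist subgroup acts transitively on isotopy classes of two-sided simple closed curves of each fixed topological type, and there are only finitely many such types (distinguished by whether the curve is separating or not and by the homeomorphism type of the complementary pieces, keeping track of how many crosscaps each piece contains). Granting this, it is enough to exhibit in $\Gamma$, for each type, one Dehn twist along a curve of that type, and then to verify $\mathcal T(N_g)\subseteq\Gamma$ directly; in practice these two tasks are carried out together, by showing that $\mathcal T(N_g)$ is generated by twists along finitely many curves obtainable from the chain $\alpha_1,\dots,\alpha_{g-1}$ (and, for $g\ge 4$, from $\beta$) by applying elements of $\Gamma$, using the standard relations among Dehn twists (braid, chain, and the like). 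The curve $\beta$ is genuinely needed only for $g\ge 4$: the chain $\alpha_1,\dots,\alpha_{g-1}$ together with $Y_{\mu_1,\alpha_1}$ cannot reach a two-sided curve whose complement is an orientable subsurface straddling four crosscaps, and $t_\beta$ is precisely what closes that gap; for $g\le 3$ no such curve exists, which is why $\beta$ does not appear there.

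The remaining bookkeeping is organised by genus. For $g=2$, $\M(N_2)$ is finite (isomorphic to $\Z_2\times\Z_2$ by Lickorish), and one checks directly that $t_{\alpha_1}$ and $Y_{\mu_1,\alpha_1}$ generate it. For $g=3$, Birman--Chillingworth analyse $\M(N_3)$ through the orientation double cover $\Sigma_2\to N_3$ and its free deck involution, reducing the claim to a finite generation check in (a subgroup of) $\M(\Sigma_2)$. For $g\ge 4$ the natural route is induction on $g$ by means of a Birman-type exact sequence relating $\M(N_g)$ to the mapping class group of a surface obtained by deleting or capping a crosscap: one verifies that the listed generators surject onto generators of the quotient and that the kernel, a crosscap-pushing subgroup, is generated by elements already known to lie in $\Gamma$. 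The main obstacle throughout is the change-of-coordinates transitivity, which on a non-orientable surface is more delicate than in the orientable case because two-sided curves of the same homological ``size'' can be topologically distinct (orientable versus non-orientable complement, one- versus two-sidedness of nearby curves); the Alexander-method arguments that establish transitivity have to be adapted to track these extra invariants. Once that transitivity, and the attendant identification of exactly which curves are reachable from $\alpha_1,\dots,\alpha_{g-1},\beta$, are in place, the reductions of the previous two paragraphs finish the proof.
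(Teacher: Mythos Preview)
The paper does not give its own proof of this theorem: it is quoted as a result of Lickorish, Birman--Chillingworth, and Szepietowski and used as a black box. There is therefore no proof in the paper to compare your attempt against.

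Taken on its own, what you have written is a strategy outline rather than a proof. The substantive content---transitivity of the twist subgroup on two-sided curves of each topological type on a non-orientable surface, and the claim that the chain curves together with $\beta$ reach a representative of every type---is explicitly deferred (``Granting this'', ``Once that transitivity \ldots\ are in place''). That is exactly where the work lies, and on $N_g$ it is genuinely more delicate than in the orientable case, as you acknowledge. Your opening reduction also needs care: Lickorish's result is that $\M(N_g)$ is generated by all Dehn twists together with crosscap slides, so reducing to the single $Y_{\mu_1,\alpha_1}$ already requires a change-of-coordinates argument for crosscap slides (any two pairs $(\mu,\alpha)$ of the relevant type are related by an element of the twist subgroup), which is another instance of the transitivity you have not established. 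Finally, for $g\ge 4$ Szepietowski's actual argument passes through the level-$2$ mapping class group rather than the Birman-sequence induction you sketch; your proposed induction is not obviously doomed, but it is not the route of the cited reference and would itself require nontrivial work to make precise. In short, the plan is reasonable in broad shape, but as written it assumes the hard steps rather than proving them.
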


Note that, for $g\geq 4$, Hirose~\cite{Hirose2} proved that the generating set for $\M (N_g)$ in Theorem~\ref{gen_mcg} is minimal among generating sets for $\M (N_g)$ by Dehn twists and crosscap slides. 

For convenience, we abuse notation of a self-homeomorphism on $N_g$ and its isotopy class, and express the homeomorphisms $t_{\alpha _i}$ $(i=1,\dots ,g-1)$, $t_\beta $, $Y_{\mu _1,\alpha _1}$, and $f^{-1}$ for any homeomorphism $f$ by $a_i=i$, $b$, $y$, and $\bar{f}$, respectively. 
The identity map on $N_{g}$ is expressed by $\mathrm{id}$. 
For self-homeomorphisms $\varphi $ and $\psi $ on $N_g$, we define $\psi \varphi :=\psi \circ \varphi $, i.e. $\varphi $ is applied first. 
The main theorem in this paper is as follows:

\begin{thm}\label{main_thm}
For $2\leq g\leq 5$, \DC s\ for any topological conjugacy classes of involutions are listed as follows: 

Genus 2 case: $[\iota _{2;1}]=y,\quad [\iota _{2;2}]=\mathrm{id},\quad [\iota _{2;3}]=1y,\quad [\iota _{2;4}]=1,\quad [\iota _{2;5}]=1$.\\

Genus 3 case: $[\iota _{3;1}]=(12)^3,\quad [\iota _{3;2}]=y,\quad [\iota _{3;3}]=1y$. \\

Genus 4 case: 
\begin{align*}
[\iota _{4;1}]&=\bar{y}23y23\bar{y}23,\quad [\iota _{4;2,1}]=21321y\bar{1}\bar{2}\bar{3}\bar{1}\bar{2}\bar{y},\quad [\iota _{4;2,2}]=12y\bar{1}\bar{2}\bar{y}12,\\
[\iota _{4;3}]&=23y(23)^212\bar{y}\bar{2}\bar{1}\bar{y},\quad [\iota _{4;4}]=by\bar{2}\bar{3}\bar{y}\bar{2}\bar{3}y\bar{2}\bar{3},\quad [\iota _{4;5}]=y(321)^2\bar{y}\bar{1}\bar{2}\bar{3}\bar{1}\bar{2},\\
[\iota _{4;6,1}]&=12y21\bar{y}\bar{2}\bar{1}y123,\quad [\iota _{4;6,2}]=by123\bar{y}2\bar{3}2312y\bar{2},\quad [\iota _{4;7}]=(123)^2,\\
[\iota _{4;8,1}]&=\bar{y}\bar{2}y12\bar{y}321,\quad [\iota _{4;8,2}]=\bar{b}(y2^2)^2121y\bar{2}\bar{1},\\
[\iota _{4;9,1}]&=\bar{y}\bar{2}\bar{3}\bar{y}23y21,\quad [\iota _{4;9,3}]=\bar{y}\bar{2}\bar{1}y21\bar{y}\bar{1}\bar{2}y12\bar{y}\bar{b},\quad [\iota _{4;10}]=\bar{b}\bar{y}23y2\bar{y}\bar{1}\bar{2}\bar{3}.
\end{align*}

Genus 5 case: 
\begin{align*}
[\iota _{5;1}]&=(1234)^5,\quad [\iota _{5;2}]=2312y4321\bar{y}\bar{1}\bar{2}\bar{3}\bar{4}\bar{2}\bar{1}\bar{3}\bar{2}\bar{y},\\
[\iota _{5;3,1}]&=12y\bar{1}\bar{2}\bar{3}\bar{y}\bar{2}y123y123,\quad [\iota _{5;3,2}]=4321y4321\bar{y}\bar{2}\bar{3}\bar{4}y\bar{1}\bar{2}\bar{3}\bar{1}\bar{2}\bar{y}21,\\
[\iota _{5;4}]&=by\bar{2}\bar{3}\bar{y}\bar{2}\bar{3}y\bar{2}\bar{3},\quad [\iota _{5;5}]=4321\bar{y}432y\bar{1}\bar{2}\bar{3}\bar{4}\bar{y}\bar{2}\bar{3}\bar{1}\bar{2}y21,\\
[\iota _{5;6}]&=y234\bar{y}234,\quad [\iota _{5;7}]=\bar{b}\bar{4}\bar{3}\bar{2}\bar{1}y1234\bar{y}23y2\bar{y}\bar{1}\bar{2}\bar{3}.\\
\end{align*}
\end{thm}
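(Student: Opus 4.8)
The strategy is to verify the identities in Theorem~\ref{main_thm} one at a time, reducing each to bookkeeping of how simple closed curves move. By the discussion preceding the theorem it suffices to treat the explicit representatives $\iota_{g;s}$ and $\iota_{g;s,t}$ depicted in Figures~\ref{figure_involution_genus2}--\ref{figure_involution_genus5}: Proposition~\ref{prop_topconj} identifies the involution attached to each NSK-map of Table~\ref{table:NSKmap} with one of these, Proposition~\ref{prop_topconj_inv-NSK} matches the topological conjugacy relations on the two sides, and the remark after Figure~\ref{crosscap_slide} shows that a \DC\ for one representative of a topological conjugacy class yields a \DC\ for every other. So the content of the theorem is the assertion that, for each listed class, the displayed word $w$ in the generators $a_i=i$, $b$, $y$ of $\M(N_g)$ (Theorem~\ref{gen_mcg}) represents the pictured involution.

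To prove such an identity $[\iota_{g;s}]=w$ I would fix on $N_g$ a finite system $\mathcal{C}_g$ of simple closed curves and arcs containing $\mu_1,\alpha_1,\dots,\alpha_{g-1},\beta$ together with a few auxiliary curves, chosen so that any self-homeomorphism of $N_g$ fixing the isotopy class of each member of $\mathcal{C}_g$ (as an unoriented curve or arc) is isotopic to the identity; this is the non-orientable version of the Alexander method, so that a mapping class of $N_g$ is determined by its action on the isotopy classes in $\mathcal{C}_g$. From the geometric description in the figure --- a reflection, a $\pi$-rotation, or the antipodal map, together with the gray fixed points and \refline s that locate the fixed set --- I would read off the image $\iota_{g;s}(c)$ of each $c\in\mathcal{C}_g$. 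Independently, applying the standard transformation formulas for $t_{\alpha_i}$, $t_\beta$ and the crosscap slide $Y_{\mu_1,\alpha_1}$ --- and keeping track of the fact, noted after Figure~\ref{crosscap_slide}, that conjugating a right-handed Dehn twist by an orientation-reversing homeomorphism turns it into a left-handed one, so the sign $\varepsilon\in\{\pm1\}$ genuinely matters --- I would compute the image of each $c\in\mathcal{C}_g$ under the word $w$. When the two computations agree on all of $\mathcal{C}_g$ we get $[\iota_{g;s}]=w$, and likewise $[\iota_{g;s,t}]=w$. In practice I would exploit recurring structure to shorten the work: several words repeat or stabilize (for instance $[\iota_{4;4}]$ and $[\iota_{5;4}]$ are the same word, coming from a symmetry compatible with a standard inclusion $N_4\hookrightarrow N_5$), and entries such as $[\iota_{3;1}]=(12)^3$, $[\iota_{4;7}]=(123)^2$ and $[\iota_{5;1}]=(1234)^5$ are powers of products of consecutive Dehn twists and can be matched against the \DC\ for the hyperelliptic involution of Stukow~\cite{Stukow}.

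A few low-genus cases want separate handling. For $g=2$, $N_2$ is the Klein bottle, $\M(N_2)$ is a finite group (of order four, generated by $y$ and $t_{\alpha_1}$, with $t_{\alpha_1}$ itself of order two), and the curve systems above do not determine mapping classes there; instead I would verify $[\iota_{2;1}]=y$, $[\iota_{2;2}]=\mathrm{id}$, $[\iota_{2;3}]=1y$, $[\iota_{2;4}]=1$ and $[\iota_{2;5}]=1$ directly, by placing each involution in the explicitly listed group $\M(N_2)$ (the entry $[\iota_{2;2}]=\mathrm{id}$ just records that that involution is isotopic to the identity). Similar care is needed for $g=3$ to be sure the chosen system is adequate.

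The real difficulty is not conceptual but combinatorial: there are on the order of thirty separate curve-tracking verifications, some of the words have length up to roughly seventeen generators, and the interaction of crosscap slides with orientation-reversing conjugation makes the transformation rules delicate and the sign bookkeeping unforgiving. The crux, and where essentially all the labor lies, is to set up once and for all consistent pictures across the four genera --- fixing the orientations of the regular neighborhoods defining $t_{\alpha_i}$ and $t_\beta$ (the arrows in Figures~\ref{scc_closed_nonorisurf} and~\ref{dehntwist}) and the direction of the push defining $Y_{\mu_1,\alpha_1}$ (Figure~\ref{crosscap_slide}) --- so that every case is computed against the same conventions.
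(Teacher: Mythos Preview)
Your plan is sound in principle --- verifying each identity by the Alexander method is exactly what the paper does at its core --- but your route differs from the paper's in one structural respect worth flagging. The paper does not carry out thirty independent curve-tracking verifications. Instead, for each even-genus involution $\iota_{g;s}$ with a fixed point (Sections~\ref{dc-pres_genus2-3} and~\ref{dc-pres_genus4-5}), it works in the \emph{pointed} mapping class group $\M(N_g,x_0)$ and finds the involution in the form $\Delta(l)\,w$, where $\Delta$ is the point-pushing map and $w$ is the displayed word. Applying the forgetful map $\F$ gives $[\iota_{g;s}]=w$ in $\M(N_g)$; applying the blowup homomorphism $\Phi\colon\M(N_g,x_0)\to\M(N_{g+1})$ (Section~\ref{section_preliminaries}) gives the corresponding odd-genus involution as $\Phi(\Delta(l))\,w$, and $\Phi(\Delta(l))=\Psi(l)$ is computed from Lemmas~\ref{pushing1}--\ref{pushing2} and~\ref{rel_y_ij}. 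Proposition~\ref{prop_blowup_even-odd} guarantees that every genus-$3$ and genus-$5$ class arises this way, so roughly half the table is derived algebraically rather than re-verified on curves. Your observation that $[\iota_{4;4}]$ and $[\iota_{5;4}]$ coincide is a shadow of this mechanism (there $l$ is trivial), but in general the odd-genus word differs from its even-genus parent by the extra factor $\Psi(l)$, and that is where the paper's longer genus-$5$ words come from. Your direct approach would of course also succeed, at the cost of more curve diagrams; the paper's buys uniformity and cuts the pictorial work roughly in half.

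One technical point to watch in your version: the non-orientable Alexander method as stated in Proposition~\ref{alexander_method} does \emph{not} assert that a mapping class is determined by its action on the isotopy classes of a filling system alone. Part~(2) requires in addition that $\varphi$ preserve each connected component of $N-\bigcup_i\gamma_i$, a condition that is automatic for orientation-preserving maps of oriented surfaces but not here (the paper remarks on this just after the proposition, and actually checks it in e.g.\ Sections~\ref{section_DC_4-5_5-5} and~\ref{section_DC_4-10_5-7}). So when you set up your systems $\mathcal{C}_g$ you must either verify this component condition in each case or enlarge $\mathcal{C}_g$ so that the complement is a single disk.
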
 

We can check that by Proposition~\ref{prop_topconj}, the topological conjugacy classes of the involutions appear in Theorem~\ref{main_thm}.
Contents of this paper are as follows. 
In Section~\ref{section_NSK_conjclass}, we give a list of representatives for topological conjugacy classes of involutions of $N_g$ for $2 \leq g \leq 5$ by using a list of equivalence classes of NSK-maps by Bujalance-Etayo-Mart\'{i}nez-Szepietowski~\cite{BEMS}. 
In Section~\ref{section_blowup}, we explain a blowup operation for an involution on $N_g$ and the topological conjugacy class of an involution on $N_g$ which is obtained by the blowup operation. 
In Section~\ref{section_mainthm}, we give the proof of Theorem~\ref{main_thm}.

\section{NSK-maps and topological conjugacy classes of involutions}\label{section_NSK_conjclass}
In this section, we give a list of topological conjugacy classes of involutions on $N_g$ for $2 \leq g \leq 5$ via equivalence classes of NSK-maps. 
We remark that Dugger~\cite{Dugger} gave a list of topological  conjugacy classes of involutions on $N_g$ for $2\leq g\leq 7$ by using equivariant surgeries. 
A list of equivalence classes of NSK-maps is given by Bujalance-Etayo-Mart\'{i}nez-Szepietowski~\cite{BEMS} for $g=4,\ 5$. 

\subsection{NEC groups}\label{section_NEC-group}

A \emph{non-Euclidean crystallographic group} (\emph{NEC group}, for short) is a discrete subgroup of the isometric transformations of the hyperbolic plane $\Hy$.
Note that an NEC group contains an orientation-reversing element in general.
An NEC group $\Gamma $ has a signature consisting of the following sequence:
\begin{equation*}
\sigma (\Gamma )=\left( g, \pm, [m_1, \ldots, m_r], \left\{C_{1}, \ldots, C_{k} \right\} \right).
\end{equation*}
Here, $g$ and $m_i$ are integers with $g \geq 0$ and $m_i \geq 2$, and each $C_i$ is a sequence of integers $(n_{i1}, \ldots, n_{is_i})$ with $n_{ij} \geq 2$.
Wilkie~\cite{Wilkie} has studied the NEC groups  and introduced their signatures.
Wilkie showed the algebraic classification of the NEC groups by using their signatures, and Macbeath~\cite{Macbeath} gave the complete classification of them.

\begin{conv}
\label{conv:signature}
The signature $(g, \pm, [m_1, \dots, m_r], \left\{C_{1}, \dots, C_{k} \right\} )$ of an NEC group $\Gamma$ determines a presentation with generators and relations as in Table~\ref{table:NECgrp}.
The symbols $x_i$ $(1\leq i\leq r)$ denote elliptic transformations, and $c_{i,j}$ $(1\leq i\leq k,\ 0\leq j\leq s_i)$ denote reflections.
In addition, the symbols $e_i$ $(1\leq i\leq k)$ and $a_i$, $b_i$ $(1\leq i\leq g)$ are hyperbolic translations, and $d_i$ $(1\leq i\leq g)$ are glide reflections.  
\begin{table}[ht]
\begin{tabular}{lll}
\hline
Signature & Generator(s) & Relation(s) \\ \hline
$m_i$ & $x_i$ & $x_i^{m_i} = 1$ \\
$C_i$ & $e_i$ & $c_{i,s_i} = e_i^{-1} c_{i,0} e_i$\\
      & $c_{i,0}, c_{i,1}, \dots, c_{i,s_i}$ & $c_{i,j}^2 = 1$, $(c_{i, j - 1} c_{i, j})^{n_{ij}} = 1$ \\
$+$ & $a_1, b_1, \dots, a_g, b_g$ & $x_1 \dotsm x_r e_1 \dotsm e_k a_1 b_1 a_1^{-1} b_1^{-1} \dotsm a_g b_g a_g^{-1} b_g^{-1} = 1$ \\
$-$ & $d_1, \dots, d_g$ & $x_1 \dotsm x_r e_1 \dotsm e_k d_1^2 \dotsm d_g^2 = 1$ \\
\hline
\end{tabular}
\caption{The table shows the generators and defining relations of the presentation for the NEC group corresponding to a signature $\left( g, \pm, [m_1, \ldots, m_r], \left\{C_{1}, \ldots, C_{k} \right\} \right)$. One of the last two generators is adopted depending on whether the sign is $+$ or $-$. If $g = 0$, then $a_i$, $b_i$, and $d_i$ are excluded from the generators and relations. Similarly, the corresponding symbols are excluded if there is no $m_i$ or $C_i$.}
\label{table:NECgrp}
\end{table}
\end{conv}

Each NEC group $\Gamma$ induces an orbifold as an orbit space $\Hy / \Gamma$, similar to Fuchsian groups.
The signature of an NEC group describes the information of the singular points of the corresponding orbifold.
For an NEC group $\Gamma$ of signature $(g, \varepsilon, [m_1, \ldots, m_r], \left\{C_1, \ldots, C_k \right\})$, the orientability of the underlying space of the corresponding orbifold is determined by whether $\varepsilon$ is $+$ or $-$. When $\varepsilon = +$, it is a sphere with $g$ handles and ordered $k$ holes, and when $\varepsilon = -$, it is a sphere with $g$ crosscaps and $k$ ordered holes.
The singular locus of the orbifold consists of $r$ ordered cone points in the interior and $s_i$ cyclically ordered reflector (or dihedral) corners on the $i$-th boundary component.
Each $i$-th cone point is with angle $2\pi/m_i$ and each $j$-th dihedral corner in $i$-th boundary is a corner reflector with angle $\pi/ n_{ij}$. 
We remark that the quotient image of the isolated fixed point of $x_i$ coincide with the $i$-th cone point of $\Hy /\Gamma $, and that of the union of the geodesics fixed by $c_{i,0}, \dots , c_{i, s_i}$ in $\Hy /\Gamma $ coincide with the $i$-th boundary component of $\Hy /\Gamma $.  

We obtain an NEC group with a specific signature from the pair of a closed surface with non-low genus and its involution.
Let $\iota $ be an involution on a closed surface $S$ and $\Gamma^\prime$ be a torsion free NEC group such that $\Gamma^\prime$ is isomorphic to the fundamental group $\pi_1(S)$.
By Nielsen realization theorem, there exists an isometry representative of the isotopy class of $\iota$, and we have an isometry $\widehat\iota$ on $\Hy$ that is a lift of the representative.
We denote by $\Gamma$ the subgroup of the isometry group of $\Hy$ generated by generators of $\Gamma^\prime$ and the isometry $\widehat\iota$.
Since $\widehat\iota$ is a lift of $\iota$, the quotient spaces $\Hy/\Gamma$ and $S / \iota$ have the same topological type.
Hence, $\Gamma$ is a discrete subgroup, that is, $\Gamma$ is an NEC group.
Since the involution $\iota $ on $S$ acts on a neighborhood of its fixed point as a rotation or a reflection, the orbifold $\Hy /\Gamma \approx S/\iota $ has only cone points with angle $\pi$ and boundary components with no reflector corners. 
Then we have $m_i=2$ for each $1\leq i\leq r$ and $s_i = 0$ for each $1\leq i\leq k$, that means that each $C_i$ is the empty sequence $C_i=(-)$. 
Hence the signature of $\Gamma $ is 
\begin{equation*}
    \sigma (\Gamma )=(h, \varepsilon, [\overbrace{2, \ldots, 2}^{r}], \{\overbrace{(-), \ldots, (-)}^{k}\}). 
\end{equation*}
For convenience, we write
\begin{equation*}
    (h, \varepsilon, [\overbrace{2, \ldots, 2}^{r}], \{\overbrace{(-), \ldots, (-)}^{k}\}) =(h,\varepsilon , [(2)^{r}],\{ (-)^{k}\}),
\end{equation*}
and express $[(2)^{0}]=[-]$ and $\{ (-)^{0}\}=\{ -\}$. 

There exists a natural epimorphism from an NEC group to $\Z_2$ induced by an involution $\iota$ on a closed surface $S$ whose kernel is isomorphic to $\pi_1(S)$.
Let $\Gamma^\prime$ be a subgroup of the isometry group that is isomorphic to $\pi_1(S)$, $\Gamma$ the subgroup of the isometry group induced by $\Gamma^\prime$ and $\iota$ introduced in the above discussion, and $\widehat{\iota}$ an isometry that is a lift of $\iota$.
Then, we have $p \circ \widehat\iota = \iota \circ p$, where $p$ is the projection $\Hy \to \Hy/\Gamma^\prime \approx S$.
Thus, $p \circ \widehat\iota^{\;2} = p$ holds.
So, $\widehat\iota^{\;2}$ is in $\Gamma^\prime$.
Hence, there is a homomorphism $\theta_\iota\colon \Gamma \to \Gamma/\Gamma^\prime \cong \Z_2$.
If the surface $S$ is homeomorphic to $N_g$, then we call such a map $\theta_\iota$ a \emph{non-orientable-surface-kernel map} (\emph{NSK-map}, for short) \emph{of genus $g$}, namely, an NSK-map of genus $g$ is an epimorphism from an NEC group to a finite group such that $\ker(\theta_\iota)$ is isomorphic to $\pi _1(N_g)$. 
For an involution $\iota $ on $N_g$ and a corresponding NSK-map $\theta \colon \Gamma \to \Z _2$, we call the signature of $\Gamma $ the \emph{signature} of $\iota $, and write $\sigma (\iota )=\sigma (\Gamma )$. 

Conversely, an NSK-map $\theta \colon \Gamma \to \Z _2$ of genus $g$ induces an involution of a non-orientable surface of genus $g$ as follows. 
Since $\ker \theta $ is isomorphic to $\pi _1(N_g)$ and $\Hy $ is simply connected, the quotient space $\Hy /\ker \theta $ is homeomorphic to $N_g$. 
By the surjectivity of $\theta$, there exists an isometry $\widehat\iota \in \Gamma$ such that $\theta(\widehat\iota)$ is the nontrivial element of $\Z_2$.
Hence, $\widehat\iota$ induces an involution on $\Hy/\ker \theta \approx N_g$.
Note that the involution is uniquely determined, not depending on the choice of isometries in $\Gamma$ mapped to the nontrivial elements.

NSK-maps help classify involutions on non-orientable surfaces.
In order to see the correspondence between NSK-maps and involutions, we review an equivalence relation in NSK-maps as follows.
Two NSK-maps of genus $g$, $\theta_1\colon \Gamma_1 \to G$ and $\theta_2\colon \Gamma_2 \to G$, are said to be \emph{topologically conjugate} if there exists an isomorphism $\psi\colon \Gamma_1 \to \Gamma_2$ and a self-isomorphism $\alpha$ of $G$ such that $\theta_2 \circ \psi = \alpha \circ \theta_1$. 
Compatibility of topological conjugations between NSK-maps of genus $g$ with ones between corresponding involutions on $N_g$ is ensured by the following proposition. 
\begin{prop}[\cite{Macbeath}*{Theorem~3} (cf. \cite{BCCS}*{Section~2})]\label{prop_topconj_inv-NSK}
Let $\theta _i\colon \Gamma _i\to \Z _2$ $(i=1, 2)$ be NSK-maps of genus $g$ and $\iota _i$ the involution on $N_g$ corresponding to $\theta _i$ as above. 
Then, $\theta _1$ is \equiva to $\theta _2$ if and only if $\iota _1$ is also \topconj to $\iota _2$. 
\end{prop}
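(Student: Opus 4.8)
The plan is to reduce the statement to the rigidity theorem for NEC groups of Macbeath~\cite{Macbeath}*{Theorem~3}: every isomorphism between cocompact NEC groups acting on $\Hy $ is induced by conjugation by some self-homeomorphism of $\Hy $. The dictionary making this work is that a self-homeomorphism $\widehat F$ of $\Hy $ with $\widehat F\Gamma _1\widehat F^{-1}=\Gamma _2$ and $\theta _2(\widehat F\gamma \widehat F^{-1})=\theta _1(\gamma )$ descends, via the coverings $\Hy \to \Hy /\ker \theta _i\approx N_g$, to a homeomorphism $F\colon N_g\to N_g$ with $F\iota _1F^{-1}=\iota _2$, and conversely every such $F$ lifts to such a $\widehat F$. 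Throughout I use that $\mathrm{Aut}(\Z _2)$ is trivial, so the automorphism $\alpha $ in the definition of topological conjugacy of NSK-maps is necessarily the identity and the condition $\theta _2\circ \psi =\alpha \circ \theta _1$ becomes simply $\theta _2\circ \psi =\theta _1$; in particular any such $\psi $ carries $\ker \theta _1$ onto $\ker \theta _2$ and the nontrivial coset $\widehat \iota _1\ker \theta _1$ onto $\widehat \iota _2\ker \theta _2$.

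For the ``only if'' direction, suppose $\psi \colon \Gamma _1\to \Gamma _2$ is an isomorphism with $\theta _2\circ \psi =\theta _1$. By Macbeath's theorem, choose a self-homeomorphism $\widehat F$ of $\Hy $ with $\widehat F\gamma \widehat F^{-1}=\psi (\gamma )$ for all $\gamma \in \Gamma _1$. Since $\psi $ maps $\ker \theta _1$ isomorphically onto $\ker \theta _2$, the equality $\widehat F\delta =\psi (\delta )\widehat F$ for $\delta \in \ker \theta _1$ shows that $\widehat F$ induces a homeomorphism $F\colon \Hy /\ker \theta _1\to \Hy /\ker \theta _2$, that is, $F\colon N_g\to N_g$. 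Picking $\widehat \iota _1\in \Gamma _1$ with $\theta _1(\widehat \iota _1)\neq 0$, the element $\psi (\widehat \iota _1)=\widehat F\widehat \iota _1\widehat F^{-1}$ satisfies $\theta _2(\psi (\widehat \iota _1))=\theta _1(\widehat \iota _1)\neq 0$, so it is one of the admissible lifts defining $\iota _2$. Descending $\widehat F\widehat \iota _1\widehat F^{-1}=\psi (\widehat \iota _1)$ to $N_g$, and using that the involution determined by an NSK-map does not depend on the chosen preimage of the generator of $\Z _2$, yields $F\iota _1F^{-1}=\iota _2$, so $\iota _1$ is \topconj to $\iota _2$.

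For the ``if'' direction, suppose $F\colon N_g\to N_g$ is a homeomorphism with $F\iota _1F^{-1}=\iota _2$, and lift $F$ to a self-homeomorphism $\widehat F$ of the universal cover $\Hy $. Since deck transformations cover $\mathrm{id}_{N_g}$, conjugation by $\widehat F$ sends $\ker \theta _1$ into $\ker \theta _2$, and onto by applying the same to $F^{-1}$. Moreover $\widehat F\widehat \iota _1\widehat F^{-1}$ covers $F\iota _1F^{-1}=\iota _2$, hence differs from $\widehat \iota _2$ by a deck transformation and lies in $\widehat \iota _2\ker \theta _2\subseteq \Gamma _2$. Since $[\Gamma _1:\ker \theta _1]=2$ we have $\Gamma _1=\langle \ker \theta _1,\widehat \iota _1\rangle $, so $\widehat F\Gamma _1\widehat F^{-1}\subseteq \Gamma _2$; the reverse inclusion follows from the argument for $F^{-1}$, whence $\widehat F\Gamma _1\widehat F^{-1}=\Gamma _2$. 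Define $\psi \colon \Gamma _1\to \Gamma _2$ by $\psi (\gamma )=\widehat F\gamma \widehat F^{-1}$; this is an isomorphism mapping $\ker \theta _1$ onto $\ker \theta _2$ and $\widehat \iota _1\ker \theta _1$ onto $\widehat \iota _2\ker \theta _2$, so $\theta _2\circ \psi =\theta _1$. A different choice of lift $\widehat F$ only composes $\psi $ with conjugation in $\Gamma _2$ by an element of $\ker \theta _2$, which changes nothing since $\Z _2$ is abelian. Taking $\alpha =\mathrm{id}_{\Z _2}$ shows $\theta _1$ is \equiva to $\theta _2$.

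The only step going beyond elementary covering-space theory is the geometric realization of the abstract isomorphism $\psi $ in the ``only if'' direction: a priori a group isomorphism between NEC groups need not be induced by any transformation of $\Hy $ at all, and this is exactly what Macbeath's classification provides. Note that $\widehat F$ cannot in general be taken to be an isometry, only a homeomorphism — but a homeomorphism is precisely what is needed to conclude that $\iota _1$ and $\iota _2$ are topologically conjugate.
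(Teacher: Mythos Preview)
The paper does not give its own proof of this proposition: it is simply cited from Macbeath~\cite{Macbeath}*{Theorem~3} (with a pointer to~\cite{BCCS}*{Section~2}) and used as a black box. Your proposal is therefore not to be compared against a proof in the paper but evaluated on its own, and it is a correct and standard unpacking of the cited result. The ``if'' direction is pure covering-space theory: a lift $\widehat F$ of the conjugating homeomorphism $F$ conjugates deck groups and lifts of the involutions, producing the required isomorphism $\psi$ of NEC groups compatible with the $\theta_i$. The ``only if'' direction is where Macbeath's theorem does the real work, realizing an abstract isomorphism $\psi\colon\Gamma_1\to\Gamma_2$ as conjugation by a homeomorphism of the model plane, which then descends to the desired $F$; you identify this step correctly. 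Two minor remarks: first, as the paper itself notes at the end of Section~2.2, for $g=2$ the universal cover is Euclidean rather than hyperbolic, so the argument should be phrased for the appropriate model plane in that case (the same reasoning goes through); second, your observation that $\mathrm{Aut}(\Z_2)$ is trivial is what makes the bookkeeping with $\alpha$ disappear, and is worth keeping explicit.
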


By an argument in this section, we have the following lemma. 

\begin{lem}\label{lem_number_fixedpt}
Let $\theta \colon \Gamma \to \Z _2$ be an NSK-map of genus $g$ 
from an NEC group $\Gamma $ of signature $(h,\varepsilon , [(2)^{r}],\{ (-)^{k}\})$ and 
$\iota $ the corresponding involution on $N_g$. 
Then, the number of the isolated fixed points of $\iota$ and that of the \refline s of it are $r$ and $k$, respectively.
\end{lem}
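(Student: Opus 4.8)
The plan is to work upstairs in $\Hy$ and push everything down through the two quotients $\Hy \to N_g = \Hy/\ker\theta \to \Hy/\Gamma \approx N_g/\iota$. Write $\widehat\iota\in\Gamma$ for a lift of $\iota$, so $\theta(\widehat\iota)\neq 0$ and $\iota(\ker\theta\cdot x)=\ker\theta\cdot\widehat\iota(x)$. First I would pin down $\mathrm{Fix}(\iota)$. Since $\ker\theta\cong\pi_1(N_g)$ is torsion free it acts freely on $\Hy$, so a point $\ker\theta\cdot x$ is fixed by $\iota$ if and only if $\gamma\widehat\iota(x)=x$ for some $\gamma\in\ker\theta$, i.e.\ if and only if $x$ is fixed by some element of $\Gamma\setminus\ker\theta$; as $\mathrm{Stab}_\Gamma(x)\cap\ker\theta=\{1\}$, this is equivalent to $\mathrm{Stab}_\Gamma(x)\neq\{1\}$. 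Hence $\mathrm{Fix}(\iota)$ is exactly the image in $N_g$ of the singular set $\Sigma:=\{x\in\Hy:\mathrm{Stab}_\Gamma(x)\neq\{1\}\}$.

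Next I would describe $\Sigma$ from the signature $(h,\varepsilon,[(2)^r],\{(-)^k\})$. Because every $C_i$ is empty, $\Gamma$ has no corner relations, so every nontrivial finite subgroup of $\Gamma$ is cyclic, generated either by a conjugate of some elliptic generator $x_i$ (of order $m_i=2$) or by a conjugate of some reflection $c_{i,0}$. Thus $\Sigma$ is the disjoint union of the $\Gamma$-orbits of the single points $\mathrm{Fix}(x_1),\dots,\mathrm{Fix}(x_r)$ (each with stabilizer of order $2$) and of the $k$ geodesics $\mathrm{axis}(c_{1,0}),\dots,\mathrm{axis}(c_{k,0})$; the two pieces are disjoint, since a point lying on a reflection axis and also fixed by an elliptic would produce a corner point in $\Hy/\Gamma$, contradicting $s_i=0$. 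As $x_i$ and $c_{i,0}$ are torsion while $\ker\theta$ is torsion free, none of them lies in $\ker\theta$, so $\Gamma=\ker\theta\sqcup\ker\theta\,x_i=\ker\theta\sqcup\ker\theta\,c_{i,0}$ and, since $x_i$ (resp.\ $c_{i,0}$) fixes its fixed set, each of these $\Gamma$-orbits coincides with the corresponding $\ker\theta$-orbit. Hence the image of the ``elliptic part'' of $\Sigma$ consists of exactly $r$ points of $N_g$, distinct because $x_i$ and $x_j$ are non-conjugate in $\Gamma$ for $i\neq j$, and the image $\bar\ell_i$ of each $\mathrm{axis}(c_{i,0})$ is the image of a single geodesic, hence connected.

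Then I would treat the local behaviour. Near each of the $r$ image points the free $\ker\theta$-action makes $N_g$ a disk on which $\langle x_i\rangle\cong\Z_2$ acts by a rotation through $\pi$, so $\iota$ fixes that point in isolation: these are precisely the isolated fixed points, $r$ in number. Near $\bar\ell_i$ the group $\langle c_{i,0}\rangle$ acts on a disk by a reflection, so $\iota$ acts there by a reflection and $\bar\ell_i$ is locally a $1$-manifold inside the compact $1$-manifold $\mathrm{Fix}(\iota)$; using proper discontinuity of the $\ker\theta$-action (for local injectivity of $\mathrm{axis}(c_{i,0})\to N_g$) and the no-corners fact again (so that distinct $\bar\ell_i$ stay disjoint and the closure of $\bar\ell_i$ adds no new points), I would show $\bar\ell_i$ is open and closed in a component of $\mathrm{Fix}(\iota)$, hence an embedded circle. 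This yields exactly $k$ pairwise disjoint reflection curves, disjoint from the $r$ isolated fixed points, and since $\mathrm{Fix}(\iota)$ is exhausted by the image of $\Sigma$ the enumeration is complete.

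I expect the main obstacle to be that last step: proving the image of each reflection-axis orbit is a single embedded simple closed curve rather than, say, a circle covering itself or a union of several circles. This is where one genuinely needs the absence of corner points in the signature together with the compactness of $N_g$ and the properly discontinuous action; the remainder is coset bookkeeping for $\ker\theta$ in $\Gamma$.
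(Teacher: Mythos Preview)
Your argument is correct. The paper, however, does not give a separate proof of this lemma: it simply says the statement follows from ``an argument in this section,'' meaning the standing identification $\Hy/\Gamma \approx N_g/\iota$ together with the orbifold dictionary recalled there (the signature $(h,\varepsilon,[(2)^r],\{(-)^k\})$ says $\Hy/\Gamma$ has exactly $r$ interior cone points of angle $\pi$ and $k$ reflector boundary circles with no corners, while the cone points of $N_g/\iota$ are the images of isolated fixed points of $\iota$ and the boundary circles are the images of the reflection curves). So the paper works entirely downstairs in the quotient orbifold and reads off the numbers directly.

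You take the complementary upstairs route: you lift to $\Hy$, identify $\mathrm{Fix}(\iota)$ with the image in $\Hy/\ker\theta$ of the $\Gamma$-singular set, and then count $\ker\theta$-orbits of elliptic fixed points and reflection axes using the coset decomposition $\Gamma=\ker\theta\sqcup\ker\theta\cdot x_i$ and the torsion-freeness of $\ker\theta$. This is essentially a hands-on verification of the same orbifold correspondence, and the ``no corners'' observation you use to show each $\bar\ell_i$ is a single embedded circle is exactly what the paper encodes by $s_i=0$. Your version is more self-contained; the paper's is a one-line appeal to already-stated NEC/orbifold facts. Either way the content is the same.
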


\subsection{Lists of equivalence classes of NSK-maps of genera up to \texorpdfstring{$5$}{5}}

In this section, we review the list of topological conjugacy classes of NSK-maps of genus $2\leq g\leq 5$ follows from~\cite{BEMS}. 
Let $\Gamma $ be an NEC group of signature $(h,\varepsilon , [(2)^{r}],\{ (-)^{k}\})$. 
We regard $\Gamma $ as a subgroup of the isometry group of $\mathbb{H}^2$ which has the presentation as in Table~\ref{table:NECgrp}. 
Recall that the presentation has the generators $x_1, \dots, x_r$, $e_1, \dots, e_k$, $c_{1,0}, c_{2,0}, \dots, c_{k,0}$, and  $a_1, b_1, \dots, a_h, b_h$ if $\varepsilon =+$ (resp. and $d_1, \dots, d_h$ if $\varepsilon =-$), and as isometries, $x_j$ for $1\leq j\leq r$ is a rotation of $\mathbb{H}^2$, $c_{j,0}$ for $1\leq j\leq r$ is a reflection of $\mathbb{H}^2$, $d_i$ is a glide reflection of $\mathbb{H}^2$, and $a_i, b_i$, and $e_i$ are translations on $\mathbb{H}^2$. 
In particular, we remark that $x_j$ and $c_{j,0}$ are order 2 elements, and the other elements are infinite order elements. 
First, since the fundamental group $\pi _1(N_g)$ of $N_g$ is torsion free for $g\geq 2$ by \cite{Karrass-Magnus-Solitar}, we have the following lemma.
\begin{lem}\label{representatives_z2_action}
Suppose that $g\geq 2$. 
Let $\theta \colon \Gamma \to \Z _2=\{ 1, X\}$ be an NSK-map of genus $g$ 
from an NEC group $\Gamma $ of signature $(h,\varepsilon , [(2)^{r}],\{ (-)^{k}\})$. 
Then the images of $x_j$ and $c_{j,0}$ by $\theta $ are nontrivial elements in $\Z _2$, namely, we have $\theta (x_j)=X$ $(1\leq j\leq r)$ and $\theta (c_{j,0})=X$ $(1\leq j\leq k)$. 
\end{lem}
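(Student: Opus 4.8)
The plan is to argue by contradiction, exploiting the torsion-freeness of $\ker\theta\cong\pi_1(N_g)$ for $g\geq 2$. Suppose first that $\theta(x_j)=1$ for some $j$ with $1\leq j\leq r$. Then $x_j\in\ker\theta$. But $x_j$ is an elliptic transformation of order $2$ (since the signature has $m_j=2$, the relation $x_j^{m_j}=1$ forces $x_j^2=1$, and $x_j\neq 1$ because it is a genuine rotation of $\mathbb{H}^2$ by angle $\pi$). Thus $\ker\theta$ would contain an element of order $2$, contradicting that $\ker\theta\cong\pi_1(N_g)$ is torsion free by \cite{Karrass-Magnus-Solitar}. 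Hence $\theta(x_j)=X$ for every $1\leq j\leq r$. The same argument applies verbatim to each reflection $c_{j,0}$ for $1\leq j\leq k$: it satisfies $c_{j,0}^2=1$ from Table~\ref{table:NECgrp} and is a nontrivial isometry (a reflection of $\mathbb{H}^2$ across a geodesic), so it has order exactly $2$, and if it lay in $\ker\theta$ we would again get $2$-torsion in $\pi_1(N_g)$, which is impossible. Therefore $\theta(c_{j,0})=X$ for all $1\leq j\leq k$.

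One point I would make explicit is why $x_j$ and $c_{j,0}$ are nontrivial in $\Gamma$ in the first place, i.e.\ why the abstract presentation in Table~\ref{table:NECgrp} really does contain these as order-$2$ elements rather than collapsing them. This is exactly the content of the realization of $\Gamma$ as a concrete NEC group acting on $\mathbb{H}^2$: by Wilkie's and Macbeath's classification (\cite{Wilkie}, \cite{Macbeath}) the signature $(h,\varepsilon,[(2)^r],\{(-)^k\})$ is realized by a subgroup of $\operatorname{Isom}(\mathbb{H}^2)$ in which $x_j$ acts as a rotation by $2\pi/2=\pi$ about the $j$-th cone point and $c_{j,0}$ acts as a reflection across a boundary geodesic, as recalled in the paragraph preceding the lemma. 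Both are manifestly nontrivial isometries and visibly of order $2$, so the abstract relations $x_j^2=1$, $c_{j,0}^2=1$ are sharp. This is the only mild subtlety; with it in hand the rest is immediate.

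I do not anticipate a real obstacle here — the lemma is essentially the observation that a torsion-free subgroup cannot absorb a torsion element, combined with the standard description of NEC group generators as concrete isometries. The only care needed is to cite the torsion-freeness of $\pi_1(N_g)$ for $g\geq 2$ (the hypothesis $g\geq 2$ is used precisely at this step, since $\pi_1(N_1)=\Z$ is torsion free too but $\pi_1(N_1)$ is not the relevant case — more to the point, for $g=1$ one does not get an NSK-map in the required sense) and to make sure the argument is applied uniformly to both families of generators $\{x_j\}$ and $\{c_{j,0}\}$.
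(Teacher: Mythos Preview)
Your proof is correct and follows exactly the approach the paper intends: the paper simply remarks that $\pi_1(N_g)$ is torsion free for $g\geq 2$ by \cite{Karrass-Magnus-Solitar} and states the lemma as an immediate consequence, and you have spelled out precisely why torsion-freeness of $\ker\theta$ forces the order-$2$ generators $x_j$ and $c_{j,0}$ out of the kernel. One small slip in your parenthetical aside: $\pi_1(N_1)\cong\Z/2\Z$, not $\Z$, so it is \emph{not} torsion free---this is in fact the real reason the hypothesis $g\geq 2$ is needed---but this does not affect your argument for the lemma itself.
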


The next proposition gives a necessary and sufficient condition for being that two NSK-maps are topologically conjugate. 

\begin{prop}[\cite{BEMS}*{Theorem 2}]\label{thm_bems}
Let $\Gamma$ be an NEC group of signature $(h; \varepsilon; [(2)^r]; \{ (-)^k \})$ and $\theta_i\colon \Gamma \to \Z _2$ an NSK-map for each $i = 1, 2$. 
Set $n_i$ and $m_i$ as follows:
\begin{align*}
n_i &= \# \{ j \in \{ 1, \ldots, k \} \mid \theta_i(e_j) = X \},\\
m_i &= \# \{ j \in \{ 1, \ldots, h \} \mid \theta_i(d_j) = X \}.
\end{align*}
Then, $\theta_1$ and $\theta_2$ are topologically conjugate if and only if the following two conditions hold:
\begin{enumerate}[label=\textup{(\arabic*)}]
\item $n_1 = n_2$
\item If $r = n_1 = n_2 = 0$ and $\varepsilon = -$, then $m_1 \equiv m_2 \pmod{2}$. In addition, if one of $m_i$ is zero, then the other is also zero.
\end{enumerate}
\end{prop}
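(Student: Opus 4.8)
The plan is to translate the statement into a question about orbits of NSK-maps under $\mathrm{Aut}(\Gamma)$ and then feed in the classical description of $\mathrm{Aut}(\Gamma)$ for NEC groups. Since $\mathrm{Aut}(\Z_2)$ is trivial, two NSK-maps $\theta_1,\theta_2\colon\Gamma\to\Z_2$ are topologically conjugate if and only if $\theta_2\circ\psi=\theta_1$ for some $\psi\in\mathrm{Aut}(\Gamma)$. By Lemma~\ref{representatives_z2_action} every NSK-map $\theta$ has $\theta(x_j)=X$ for $1\le j\le r$ and $\theta(c_{j,0})=X$ for $1\le j\le k$, so $\theta$ is completely encoded by the tuple of its values on $e_1,\dots,e_k$ together with $a_1,b_1,\dots,a_h,b_h$ (if $\varepsilon=+$) or $d_1,\dots,d_h$ (if $\varepsilon=-$). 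Applying $\theta$ to the long relation of Table~\ref{table:NECgrp} and using that $\Z_2$ is abelian (every commutator and square dies), the only constraint among these values is $r+n\equiv 0\pmod{2}$; in addition $\theta$ must be surjective with $\ker\theta\cong\pi_1(N_g)$, and this non-orientability is equivalent, after comparing $\theta$ with the orientation homomorphism $w\colon\Gamma\to\Z_2$ sending each reflection $c_{j,0}$ and each glide reflection $d_i$ to $X$ and all other generators to $1$, to $w\ne 1$ and $w\ne\theta$. So the task becomes: classify the admissible tuples modulo the precomposition action of $\mathrm{Aut}(\Gamma)$.

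Next I would bring in the structure of $\mathrm{Aut}(\Gamma)$. By the work of Wilkie and Macbeath on NEC groups (equivalently, $\mathrm{Out}(\Gamma)$ can be identified with the mapping class group of the orbifold $\Hy/\Gamma$, a surface of genus $h$ — non-orientable if $\varepsilon=-$ — with $k$ boundary circles and $r$ cone points of order $2$), $\mathrm{Aut}(\Gamma)$ is generated, modulo inner automorphisms, by: permutations of the cone points $x_j$; permutations of the boundary components, carrying along the pairs $(e_j,c_{j,0})$; the standard mapping-class action on the handles if $\varepsilon=+$ or on the crosscaps if $\varepsilon=-$; and ``slide'' automorphisms pushing a cone point, a boundary component, or a crosscap around a loop of the surface. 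Reading off the effect of each on the encoding tuple by a Tietze transformation of the presentation, one finds: inner automorphisms and cone-point permutations act trivially (recall $\theta(x_j)=X$ always); boundary permutations permute $(\theta(e_j))_j$; crosscap moves permute $(\theta(d_i))_i$ and replace some $d_i$ by $d_i^{-1}$, invisible in $\Z_2$; handle moves realize the $\mathrm{Sp}(2h,\Z_2)$-action on $(\theta(a_i),\theta(b_i))_i$; a Dehn twist along a two-sided simple closed curve $\gamma$ contributes the transvection $v\mapsto v+(v\cdot[\gamma])[\gamma]$ on $\Z_2$-homology, with $[\gamma]$ of even weight; and a slide of a crosscap (or a handle generator) past a cone point, or past a boundary component with $\theta(e_j)=X$, multiplies a single $\theta(d_i)$ (resp.\ $\theta(a_i)$ or $\theta(b_i)$) by $X$.

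For necessity I would check the claimed quantities are $\mathrm{Aut}(\Gamma)$-invariant. The integer $n$ counts the indices $j$ with $\theta(e_j)=X$, and via Proposition~\ref{prop_topconj_inv-NSK} and the discussion around Lemma~\ref{lem_number_fixedpt} it equals the number of reflection curves of the corresponding involution whose regular neighborhood is a M\"{o}bius band (the one-sided reflection curves); being topological, it is a conjugacy invariant, so $n_1=n_2$. For condition~(2), suppose $r=n_1=n_2=0$ and $\varepsilon=-$. Then the residual datum is $\phi:=(\theta(d_i))_i\in\Z_2^h$; with no cone points and no boundary component carrying an $X$, the automorphisms acting nontrivially on $\phi$ are exactly those induced by the mapping class group of the closed surface $N_h$ (the boundary-related moves factor through point-pushes, which act trivially on $H_1(N_h;\Z_2)$). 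This action preserves the mod-$2$ intersection form, hence the self-pairing $\phi\cdot\phi$, which is the number of nonzero entries of $\phi$ reduced mod $2$, i.e.\ $m\bmod 2$; it also preserves the condition $\phi=0$, i.e.\ $m=0$ (equivalently, $N_g\setminus\mathrm{Fix}(\iota)$ disconnected). Hence $m_1\equiv m_2\pmod{2}$ and $m_1=0\iff m_2=0$.

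Conversely, assuming (1) and (2), I would put $\theta_1$ and $\theta_2$ into a common normal form. First use boundary permutations so that $\theta(e_j)=X$ exactly for $j\le n$. If $\varepsilon=+$, the $\mathrm{Sp}(2h,\Z_2)$-action is transitive on nonzero vectors, the zero handle-tuple being excluded precisely when $r=n=0$ (there it forces $w=\theta$) and otherwise moved to a nonzero tuple by a slide past a cone point or a boundary component with $\theta(e_j)=X$; so the handle data carries no further information and $n$ is the only invariant. If $\varepsilon=-$, sort the crosscaps so $\theta(d_i)=X$ for $i\le m$; when $r>0$ or $n>0$, repeatedly apply slide moves past a cone point or a boundary component with $\theta(e_j)=X$ to change $m$ by $\pm1$, so $m$ is not an invariant; when $r=n=0$, use transvections by even-weight classes (Dehn twists), together with the fact — supplied by the structure of the $\mathrm{Mod}(N_h)$-action on $\Z_2^h$ and the exclusion of the orientable case $m=h$ — that the orbit of $\phi$ among admissible tuples depends only on $\phi\cdot\phi$ and on whether $\phi=0$, to reach a representative determined by $m\bmod 2$ and by whether $m=0$. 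In all cases $\theta_1$ and $\theta_2$ acquire the same normal form, hence are conjugate. I expect the main obstacle to be exactly this last bundle of moves: one must exhibit the ``slide'' and ``transvection'' automorphisms explicitly as Tietze transformations of the presentation in Table~\ref{table:NECgrp} and verify their precise effect on the encoding tuple, since it is these that force ``$m$'' to collapse to ``$m\bmod 2$ together with whether $m=0$'' in the exceptional case and to vanish as an invariant otherwise; the remaining steps are routine bookkeeping.
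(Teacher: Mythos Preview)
The paper does not prove this proposition: it is quoted verbatim as \cite{BEMS}*{Theorem~2} and used as a black box (see the sentence ``By using Proposition~\ref{thm_bems}, Bujalance--Etayo--Mart\'{i}nez--Szepietowski gave a list\ldots'' immediately after it). So there is no ``paper's own proof'' to compare against; your proposal is an attempt to reprove the cited result.

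As a sketch of how one would prove the BEMS theorem, your outline is on the right track and is essentially the standard strategy for such classification results: reduce to $\mathrm{Aut}(\Gamma)$-orbits on the finite set of admissible $\Z_2$-valued tuples, invoke a generating set for $\mathrm{Aut}(\Gamma)$ coming from the orbifold mapping class group, and compute the effect of each generator on the tuple. Two places deserve more care. First, your necessity argument for condition~(1) appeals to the topological interpretation of $n$ as the number of one-sided reflection curves; in this paper that interpretation is Lemma~\ref{theta_e_i}, which is stated and proved \emph{after} Proposition~\ref{thm_bems}, so if you want a self-contained algebraic proof you should instead exhibit $n$ directly as an $\mathrm{Aut}(\Gamma)$-invariant (it is: the conjugacy classes of the reflections $c_{j,0}$ and of the products $e_jc_{j,0}$ are permuted by automorphisms, and $\theta(e_j)=X$ records whether $e_jc_{j,0}$ lies in $\ker\theta$). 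Second, the orbit analysis on $H_1(N_h;\Z_2)$ in the exceptional case $r=n=0$, $\varepsilon=-$ needs the precise image of $\mathcal{M}(N_h)$ in $\mathrm{GL}(h,\Z_2)$: it is the stabiliser of the characteristic class $w_1=(1,\dots,1)$, and one must check that this stabiliser has exactly the orbits $\{0\}$, $\{v\neq 0:\ \langle v,w_1\rangle=0\}$, $\{v:\ \langle v,w_1\rangle=1\}$ after excluding the forbidden vector $w_1$ itself (which corresponds to $\theta=w$). You correctly flag this as the crux; filling it in is routine linear algebra once the generating transvections and crosscap slides are written down, but it is not automatic.
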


We give a topological interpretations for the condition~(1) of the above proposition in Lemma~\ref{theta_e_i}. 

By using Proposition~\ref{thm_bems}, Bujalance-Etayo-Mart\'{i}nez-Szepietowski gave a list of topological conjugacy classes of NSK-maps of genera 4 and 5 in Sections~4 and~5 in~\cite{BEMS}. 
By their list and \cite{Dugger}*{Theorem~8.6}, we list representatives of topological conjugacy classes of NSK-maps of genus $2\leq g\leq 5$ as in Table~\ref{table:NSKmap}\footnote{For the lists of genera 2 and 3 in Table~\ref{table:NSKmap}, we can check the correctness of the lists by the fact that the number of topological conjugacy classes of involutions on $N_2$ is 5 and that on $N_3$ is 3 by Theorem~8.6 in \cite{Dugger}, and distinct two NSK-maps are not topologically conjugate.}. 
These representatives are expressed as $\theta _{g;s}$ or $\theta _{g;s,t}$, as in the fourth column of Table~\ref{table:NSKmap}.
The subscripts are positive integers, where $g$ is the genus of an NSK-map (column 1), $s$ is a number to distinguish NEC groups that are the domains (column 2), and $t$ is one to differentiate NSK-maps with the same genus and domain.
The fifth column presents the image of the generator $(a_1, b_1, \dots, a_g, b_g, d_1, \dots, d_g,\ x_1, \dots, x_r,\ e_1, \dots, e_k,\ c_{1,0}, \dots, c_{k,0})$ of the domain by each NSK-map. 
We remark that, for a genus and an NEC group, if there exists a unique NSK-map of the genus whose domain is the group, we abuse notation and write $\theta_{g;s}$ instead of $\theta_{g;s,1}$ for simplicity. 
For instance, the NSK-map $\theta _{4;2,1}\colon \Gamma \to \Z _2$ shown in the second row of genus 4 in Table 2 is the 1st NSK-map whose domain is the 2nd NEC group in genus 4.
Furthermore, the map satisfies
\[
\theta _{4;2,1}\colon (x_1, x_2, e_1, e_2, c_{1,0}, c_{2,0})\to (X, X, 1, 1, X, X),
\]
where $X$ is the nontrivial element of $\Z_2$.

We can describe the involutions of the genus 2 non-orientable surface $N_2$ by using NSK-maps with the domains as discrete subgroups of the isometry group of the Euclidean plane instead of NEC groups.
As a matter of fact, we cannot describe any involution of $N_2$ by an NSK-map with an NEC group domain because $N_2$ admits a Euclidean metric.
However, a similar argument above can be applied to discrete subgroups of the isometry group of the Euclidean plane, and the same fact holds.
Besides, as Macbeath mentioned in \cite{Macbeath}*{Section~10}, it is possible to represent the discrete subgroups of the isometry group of the Euclidean plane using the signatures of the NEC groups introduced in Convention~\ref{conv:signature}. 
Hence, we can present the involutions of $N_2$ in the same way as $N_g$ with $g = 3, 4, 5$.

\begin{table}[ht]
\begin{tabular}{lllll}
\hline
Genus & Num. & $\Gamma$ & Name & Image \\ \hline
2 & 1      & $\NEC{0}{p}{2}{1}$ & $\theta_{2;1}$ & $(X, X, 1, X)$ \\
 & 2      & $\NEC{0}{p}{0}{2}$ & $\theta_{2;2}$ & $(X, X, X, X)$ \\
 & 3      & $\NEC{1}{m}{2}{0}$ & $\theta_{2;3}$ & $(\text{$1$ or $X$}, X, X)$ \\
 & 4      & $\NEC{1}{m}{0}{1}$ & $\theta_{2;4}$   & $(1, 1, X)$ \\
 & 5      & $\NEC{2}{m}{0}{0}$ & $\theta_{2;5}$ & $(1, X)$ \\ \hline
3 & 1      & $\NEC{0}{p}{3}{1}$ & $\theta_{3;1}$ & $(X, X, X, X, X)$ \\
 & 2      & $\NEC{0}{p}{1}{2}$ & $\theta_{3;2}$ & $(X, X, 1, X, X)$ \\
 & 3      & $\NEC{1}{m}{1}{1}$ & $\theta_{3;3}$ & $(\text{$1$ or $X$}, X, X, X)$ \\ \hline
4 & 1      & $\NEC{0}{p}{4}{1}$ & $\theta_{4;1}$   & $(X, X, X, X, 1, X)$ \\
 & 2      & $\NEC{0}{p}{2}{2}$ & $\theta_{4;2,1}$ & $(X, X, 1, 1, X, X)$ \\
 &        &                                    & $\theta_{4;2,2}$ & $(X, X, X, X, X, X)$ \\
 & 3      & $\NEC{0}{p}{0}{3}$ & $\theta_{4;3}$   & $(X, X, 1, X, X, X)$ \\
 & 4      & $\NEC{1}{m}{4}{0}$ & $\theta_{4;4}$   & $(\text{$1$ or $X$}, X, X, X, X, X)$ \\
 & 5      & $\NEC{1}{m}{2}{1}$ & $\theta_{4;5}$   & $(\text{$1$ or $X$}, X, X, X, 1, X)$\\
 & 6      & $\NEC{1}{m}{0}{2}$ & $\theta_{4;6,1}$ & $(\text{$1$ or $X$}, X, X, X, X)$\\
 &        &                    & $\theta_{4;6,2}$ & $(1, 1, 1, X, X)$\\
 & 7      & $\NEC{2}{m}{2}{0}$ & $\theta_{4;7}$   & $(\text{$1$ or $X$}, \text{$1$ or $X$}, X, X)$\\
 & 8      & $\NEC{2}{m}{0}{1}$ & $\theta_{4;8}$   & $(1, \text{$1$ or $X$}, 1, X)$\\
 & 9      & $\NEC{3}{m}{0}{0}$ & $\theta_{4;9,1}$ & $(X, 1, 1)$\\
 &        &                    & $\theta_{4;9,3}$ & $(X, 1, X)$\\
 & 10     & $\NEC{1}{p}{0}{1}$ & $\theta_{4;10}$  & $(X, \text{$1$ or $X$}, 1, X)$\\ \hline
5 & 1      & $\NEC{0}{p}{5}{1}$ & $\theta_{5;1}$   & $(X, X, X, X, X, X, X)$ \\
 & 2      & $\NEC{0}{p}{3}{2}$ & $\theta_{5;2}$   & $(X, X, X, X, 1, X, X)$ \\
 & 3      & $\NEC{0}{p}{1}{3}$ & $\theta_{5;3,1}$ & $(X, X, X, X, X, X, X)$ \\
 &      &                    & $\theta_{5;3,2}$ & $(X, X, 1, 1, X, X, X)$ \\
 & 4      & $\NEC{1}{m}{3}{1}$ & $\theta_{5;4}$   & $(\text{$1$ or $X$}, X, X, X, X, X)$ \\
 & 5      & $\NEC{1}{m}{1}{2}$ & $\theta_{5;5}$   & $(\text{$1$ or $X$}, X, X, 1, X, X)$\\
 & 6      & $\NEC{2}{m}{1}{1}$ & $\theta_{5;6}$   & $(\text{$1$ or $X$}, \text{$1$ or $X$}, X, X, X)$\\
 & 7      & $\NEC{1}{p}{1}{1}$ & $\theta_{5;7}$   & $(\text{$1$ or $X$}, \text{$1$ or $X$}, X, X, X)$\\ \hline
\end{tabular}
\caption{List of NSK-maps. The Image column represents the images by NSK-map when the generators corresponding to the signature are arranged in the order $a_1, b_1, \dots, a_g, b_g$, $d_1, \dots, d_g$, $x_1, \dots, x_r$, $e_1, \dots, e_k$, $c_{1,0}, \dots, c_{k,s_k}$. The symbol $X$ denotes the generator of $\Z_2$.}
\label{table:NSKmap}
\end{table}

\subsection{Correspondence between the topological conjugacy classes of involutions and those of NSK-maps of genera up to 5}
\label{section_list_involution}

In this section, we check the correspondence between the topological conjugacy classes of involutions in Figures~\ref{figure_involution_genus2}--\ref{figure_involution_genus5} and those of NSK-maps in Table~\ref{table:NSKmap}.
To give this, we explain a topological interpretation for the condition~(1) in Proposition~\ref{thm_bems}. 
For an involution $\iota $ on a surface $N$, we recall that a \textit{\refline } $\alpha $ of $\iota $ is a simple 
closed curve on $N$ such that the restriction of $\iota $ to $\alpha $ is the identity map on $\alpha $.

The next lemma gives a topological interpretation for the condition~(1) in Proposition~\ref{thm_bems}. 

\begin{lem}\label{theta_e_i}
Let $\theta \colon \Gamma \to \Z _2$ be an NSK-map of genus $g$ 
from an NEC group $\Gamma $ of signature $(h,\varepsilon , [(2)^{r}],\{ (-)^{k}\})$ with $k\geq 1$, $\iota $ the corresponding involution on $N_g$, and $e_i$ $(1\leq i\leq k)$ translations in the generating set for $\Gamma $ as in Convention~\ref{conv:signature}.
Set 
\[
n=\# \{ j \in \{ 1, \ldots, k \} \mid \theta (e_j) = X \}. 
\]
Then $n$ \refline s of $\iota $ are one-sided, and the other $k-n$ \refline s of $\iota $ are two-sided. 
\end{lem}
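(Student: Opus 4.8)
The plan is to pass to the universal cover $\mathbb{H}^2$ and reduce the two-sidedness of each reflection curve of $\iota$ to the orientation behaviour of an explicit infinite cyclic subgroup of $\ker\theta$. Write $p\colon \mathbb{H}^2\to \mathbb{H}^2/\ker\theta\approx N_g$ for the covering projection and, for $i\in\{1,\dots,k\}$, let $L_i\subset \mathbb{H}^2$ be the geodesic fixed pointwise by the reflection $c_{i,0}$ from the generating set of $\Gamma$. By the discussion in Section~\ref{section_NEC-group}, $p(L_i)$ is exactly the $i$-th reflection curve $\alpha_i$ of $\iota$, and $\alpha_i=L_i/S_i$, where $S_i:=\{\gamma\in\ker\theta\mid \gamma(L_i)=L_i\}$. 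Since the $\ker\theta$-action on $\mathbb{H}^2$ is free and properly discontinuous and distinct $\ker\theta$-translates of $L_i$ are disjoint, a regular neighborhood of $\alpha_i$ in $N_g$ is the quotient by $S_i$ of a thin $S_i$-invariant strip about $L_i$; hence $\alpha_i$ is two-sided if every element of $S_i$ preserves the orientation of $\mathbb{H}^2$, and one-sided otherwise.

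It then remains to compute $S_i$. From the relation $c_{i,s_i}=e_i^{-1}c_{i,0}e_i$ of Table~\ref{table:NECgrp} with $s_i=0$ one gets $e_ic_{i,0}=c_{i,0}e_i$, so the (orientation-preserving) hyperbolic translation $e_i$ commutes with the reflection $c_{i,0}$ and hence has axis $L_i=\mathrm{Fix}(c_{i,0})$. By the classical description of the boundary generators of an NEC group (Wilkie~\cite{Wilkie}, Macbeath~\cite{Macbeath}), the stabilizer of $L_i$ in $\Gamma$ is $\langle e_i,c_{i,0}\rangle\cong\mathbb{Z}\times\mathbb{Z}_2$ with $e_i$ primitive, so its elements are precisely $e_i^{m}$ and $e_i^{m}c_{i,0}$, $m\in\mathbb{Z}$. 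Intersecting with $\ker\theta$ and using $\theta(c_{i,0})=X$ from Lemma~\ref{representatives_z2_action}: if $\theta(e_i)=1$ then $S_i=\langle e_i\rangle$, generated by the translation $e_i$, and $\alpha_i$ is two-sided; if $\theta(e_i)=X$ then $S_i=\langle e_ic_{i,0}\rangle$ (one has $(e_ic_{i,0})^2=e_i^{2}$), generated by the glide reflection $e_ic_{i,0}$ along $L_i$, which reverses orientation, so $\alpha_i$ is one-sided. Summing over $i=1,\dots,k$ then yields that exactly $n$ of the reflection curves are one-sided and the remaining $k-n$ are two-sided.

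The only nonroutine input is the identification $\mathrm{Stab}_\Gamma(L_i)=\langle e_i,c_{i,0}\rangle$ with $e_i$ primitive, i.e.\ that no further isometry fixes $L_i$ setwise; I would quote this from the structure theory of NEC groups rather than reprove it. Finally, I would note that when $g=2$ the domain of $\theta$ is a Euclidean crystallographic group (as explained after Table~\ref{table:NSKmap}); the argument above applies verbatim with $\mathbb{H}^2$ replaced by the Euclidean plane, $L_i$ a line, $e_i$ a translation along $L_i$, and $e_ic_{i,0}$ a Euclidean glide reflection.
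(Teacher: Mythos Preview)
Your proof is correct and follows essentially the same approach as the paper's: both pass to the universal cover, identify the $i$-th reflection curve as the image of the geodesic $L_i=\mathrm{Fix}(c_{i,0})$, use the relation $e_i^{-1}c_{i,0}e_i=c_{i,0}$ to see that $e_i$ has axis $L_i$, and then decide the sidedness of $\alpha_i$ according to whether the element of $\ker\theta$ acting primitively along $L_i$ is the translation $e_i$ (when $\theta(e_i)=1$) or the glide reflection $e_ic_{i,0}$ (when $\theta(e_i)=X$). Your version is a bit more explicit in that you actually compute $\mathrm{Stab}_\Gamma(L_i)=\langle e_i,c_{i,0}\rangle$ and then intersect with $\ker\theta$, whereas the paper simply observes which of $e_i$ or $c_{i,0}e_i$ lies in $\ker\theta$ and reads off the orientation behaviour; but the underlying argument is the same, and your remark about the Euclidean case $g=2$ matches the paper's parenthetical treatment.
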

\begin{proof}
Let $U$ be the universal covering space of $N_g$ and $p$ the natural projection $U \to U / \ker\theta \approx N_g$.
(Note that considering the metric of $N_g$, whether $U$ is the Euclidean plane or the hyperbolic plane depends on if $g = 2$ or $g > 2$.)
We first show that for each mirror action $c_{i,0}$ fixing the geodesic $\tilde{\gamma}_i$, the image $\gamma_i := p(\tilde{\gamma}_i)$ is also fixed by the involution $\iota$ and a simple closed (geodesic) curve.
By the definition of $\iota$, each representative $\tilde{\iota}$ of the nontrivial class of $\Gamma/\ker\theta \cong \Z_2$ induces $\iota$.
That is, $\iota \circ p = p \circ \tilde{\iota}$ holds.
By Lemma~\ref{representatives_z2_action}, each $c_{i,0}$ satisfies $\theta(c_{i,0}) = X$, so $\iota$ fixes each point of the image $\gamma_i$.
By the assumption, the signature of $\Gamma$ defines a relation $c_{i,0} = e_i^{-1} c_{i,0} e_i$ for each $i$. Hence, each translation $e_i$ preserves the geodesic $\tilde{\gamma}_i$ fixed by $c_i$.
In particular, if $U$ is the hyperbolic plane, then the axis of the hyperbolic translation $e_i$ and $\tilde{\gamma}_i$ coincide.
Hence, the image $\gamma_i$ is a simple closed geodesic curve.
In particular, $\gamma_i$ is a \refline \ of $\iota$.

In the case of $\theta(e_i) = 1$, the translation $e_i$ is contained in $\ker\theta$.
Since $e_i$ is an orientation-preserving isometry, the curve $\gamma_i$ is two-sided.
On the other hand, if $\theta(e_i) = X$, the translation $e_i$ is not contained in $\ker\theta$, but the composition $c_{i,0} \circ e_i$ is an element of it.
Since the composition is an orientation-reversing isometry and fixes the geodesic $\tilde{\gamma}_i$, the curve $\gamma_i$ is one-sided.
Therefore, by the assumption, there are exactly $n$ one-sided \refline s, and $k - n$ two-sided \refline s.
\end{proof}

We define involutions $\iota_{g;s}$ and $\iota_{g;s,t}$ on closed non-orientable surfaces described as in Figures~\ref{figure_involution_genus2}--\ref{figure_involution_genus5}.
Recall that for an NSK-map $\theta _{g;s,t}\colon \Gamma \to \Z _2$ in Table~\ref{table:NSKmap}, we abuse notation and simply write $\theta _{g;s,1}=\theta _{g;s}$ when the topological conjugacy class of $\theta _{g;s,t}$ depends on only genus $g$ and the label $s$. 
Similarly, for convenience, we denote $\iota _{g;s,1}=\iota _{g;s}$ when $\iota _{g;s,t}$ for $t\not=1$ is not described in Figures~\ref{figure_involution_genus2}--\ref{figure_involution_genus5}. 
Then we have the following proposition. 

\begin{prop}\label{prop_topconj}
Suppose that $2\leq g\leq 5$. 
Let $\theta _{g;s,t}\colon \Gamma \to \Z _2$ be an NSK-map of genus $g$ defined in Table~\ref{table:NSKmap}, and $\iota $ the corresponding involution on $N_g$. 
Then $\iota $ is \topconj  to $\iota _{g;s,t}$. 
\end{prop}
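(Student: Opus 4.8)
The plan is to verify, for each pair $(g;s)$ or triple $(g;s,t)$ appearing in Table~\ref{table:NSKmap}, that the involution $\iota_{g;s,t}$ drawn in Figures~\ref{figure_involution_genus2}--\ref{figure_involution_genus5} corresponds (up to topological conjugacy) to the NSK-map of that name. By Proposition~\ref{prop_topconj_inv-NSK}, topological conjugacy of involutions is detected by topological conjugacy of their NSK-maps, and by Proposition~\ref{thm_bems} the latter is controlled entirely by the signature $(h,\varepsilon,[(2)^r],\{(-)^k\})$ together with the numbers $n$ (one-sided reflection curves) and $m$ (images of the $d_j$'s). So the strategy reduces to extracting, purely from each picture, the following invariants of $\iota_{g;s,t}$: the genus $g$ of the ambient surface (fixed by the figure), the number $r$ of isolated fixed points, the number $k$ of reflection curves and how many of them are one-sided (this gives $n$), the orientability sign $\varepsilon$ of the quotient orbifold $N_g/\iota$, its genus $h$, and — when the residual ambiguity of Proposition~\ref{thm_bems}(2) is in play — the parity of $m$.

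First I would fix, once and for all, the dictionary: by Lemma~\ref{lem_number_fixedpt} the number $r$ equals the number of isolated fixed points (the gray points in the figures) and $k$ equals the number of reflection curves (the gray simple closed curves); by Lemma~\ref{theta_e_i} the integer $n$ equals the number of one-sided reflection curves, which can be read off the picture by checking whether a regular neighborhood of the gray curve is an annulus or a Möbius band. Then, for each figure, I would compute the Euler characteristic of the quotient orbifold from a fundamental domain for the action (or directly from the Riemann–Hurwitz–type count $\chi(N_g) = 2\chi(N_g/\iota) - r$, since the isolated fixed points are the only branch points and reflection curves contribute no branching to $\chi$), determine the orientability of $N_g/\iota$ by inspecting whether $\iota$ is orientation-type "reflection/antipodal" versus "$\pi$-rotation" in the relevant handles (which is exactly what the labels "ref.", "$\pi$", "anti." in the figures encode), and thereby read off $\varepsilon$ and $h$. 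Matching these computed data against the signature listed in column~3 of Table~\ref{table:NSKmap} and against the image data in column~5 (via $n$, and $m$ when needed) then identifies $\iota_{g;s,t}$ with the claimed NSK-map.

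For the cases where Proposition~\ref{thm_bems} leaves a genuine ambiguity — namely $r = n = 0$, $\varepsilon = -$ (rows such as $\theta_{2;3}$, $\theta_{2;5}$, $\theta_{4;6,1}$, $\theta_{4;7}$, $\theta_{5;6}$, and those with a "$1$ or $X$" entry) — one must additionally pin down the parity of $m$, or at least whether $m = 0$; this I would do by exhibiting, in the figure, a glide reflection of $N_g/\iota$ lying in $\ker\theta$ (forcing $m > 0$) or arguing it cannot exist, as appropriate, which is why the table records only the parity/vanishing of those entries. The main obstacle I anticipate is not any single deep step but the bookkeeping: reading the orbifold data correctly off each of the roughly forty pictures, in particular distinguishing one-sided from two-sided reflection curves and correctly computing the orientability sign $\varepsilon$ and genus $h$ of the quotient for the "mixed" involutions that are simultaneously a rotation on some handles and a reflection on others. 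Once each picture's invariants are tabulated, the identification with Table~\ref{table:NSKmap} is forced by Propositions~\ref{thm_bems} and~\ref{prop_topconj_inv-NSK}, and the fact (recorded in the footnote to Table~\ref{table:NSKmap}) that for $g = 2,3$ the total count of conjugacy classes matches Dugger's, so that the correspondence is necessarily a bijection.
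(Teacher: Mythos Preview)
Your overall strategy matches the paper's proof: read off $(h,\varepsilon,r,k)$ from each figure and, when the signature alone does not determine the class, additionally use the count $n$ of one-sided reflection curves (via Lemma~\ref{theta_e_i}) or the invariant $m$. However, you have misread where the residual ambiguity actually lies. The ``$1$ or $X$'' entries in Table~\ref{table:NSKmap} do \emph{not} flag cases requiring further work; on the contrary, they mark generators whose image is irrelevant because every choice yields topologically conjugate NSK-maps (this is precisely what Proposition~\ref{thm_bems} guarantees once condition~(2) is vacuous or $n$ is fixed). In particular your examples $\theta_{2;3}$, $\theta_{4;7}$, $\theta_{5;6}$ all have $r>0$, so condition~(2) never applies and the signature alone already pins down the class. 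The genuine ambiguities are exactly the pairs $(g;s)$ for which the table lists several rows with the same signature: $(4;2)$, $(4;6)$, $(5;3)$, which are resolved by the count $n$, and $(4;9)$, the only case where $m$ is the operative invariant.

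For $(4;9)$ your proposed method (``exhibit a glide reflection of $N_g/\iota$ lying in $\ker\theta$'') is too vague to carry the argument. The paper handles it concretely: for $\theta_{4;9,1}$ it notes that $d_2,d_3\in\ker\theta$ give two pairs of crosscaps in $\Hy/\ker\theta$ which the involution induced by $d_1$ swaps, matching the picture of $\iota_{4;9,1}$; for $\theta_{4;9,3}$ it constructs an explicit fundamental domain $\widetilde{\mathcal{D}}=\mathcal{D}\cup d_3(\mathcal{D})$ for $\ker\theta$ and identifies the action induced by $d_3$ with the antipodal map (Figure~\ref{figure_iota493}). One further caution: the labels ``ref.'', ``$\pi$'', ``anti.'' describe the ambient symmetry of the model in $\mathbb{R}^3$, not the sign $\varepsilon$ of the quotient. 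For instance $\iota_{4;7}$ and $\iota_{4;10}$ are both drawn as $\pi$-rotations yet have $\varepsilon=-$ and $\varepsilon=+$ respectively, so $\varepsilon$ must genuinely be computed from the topology of $N_g/\iota$ rather than read off the label.
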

\begin{proof}
Let $(h,\varepsilon , [(2)^{r}],\{ (-)^{k}\})$ be the signature of the NEC group $\Gamma $. 
We recall that the signature of $\iota $ is also $\sigma (\iota )=(h,\varepsilon , [(2)^{r}],\{ (-)^{k}\})$ and  
\begin{itemize}
    \item[(1)] the genus of $N_g/\iota $ is $h$, 
    \item[(2)] $N_g/\iota  $ is orientable (resp. non-orientable) if $\varepsilon =+$ (resp. $\varepsilon =-$),  
    \item[(3)] $\iota $ has $r$ isolated fixed points and $k$ \refline s by Lemma~\ref{lem_number_fixedpt}. 
\end{itemize} 
These properties are invariant under topological conjugations. 
By Proposition~\ref{prop_topconj_inv-NSK}, the topological conjugacy class of $\iota $ is determined by the topological conjugacy class of $\theta _{g;s,t}$. 
Hence, in the case that the topological conjugacy class of $\theta _{g;s,t}$ depend on only $g$ and $s$, the topological conjugacy class of $\iota $ is determined by the conditions~(1), (2), and (3) above. 
We note that the fixed points of $\iota _{g;s}$ are $r$ isolated fixed points and $k$ simple closed curves. 
Therefore, we can check that $\iota $ is \topconj to $\iota _{g;s}$ except for $(g,s)\in \{(4,2), (4,6), (4,9), (5,3)\}$. 

For the cases $(g,s)=(4,2), (4,6),$ and $(5,3)$, we see that $n=\# \{ j \in \{ 1, \ldots, k \} \mid \theta (e_j) = X \} \not=0$. 
Hence, by Proposition~\ref{thm_bems}, the topological conjugacy class of $\iota $ is determined by the number $n$. 
Then, by Lemma~\ref{theta_e_i}, $\iota $ has $n$ one-sided \refline s\ and $k-n$ two-sided \refline s. 
Thus we can check that $\iota $ is \topconj to $\iota _{g;s,t}$. 
For instance, in the case $(g,s)=(4,2)$, the signature of $\Gamma $ is $\NEC{0}{p}{2}{2}$.  
Hence the quotient space $N_4/\iota $ is a 2-sphere with two boundary components, and $\iota $ has two of each of isolated fixed points and \refline s by Lemma~\ref{lem_number_fixedpt}. 
By Table~\ref{table:NSKmap}, we have
\begin{align*}
    &\theta _{4;2,1}\colon (x_1, x_2, e_1, e_2, c_{1,0}, c_{2,0})\to (X, X, 1, 1, X, X),\\
    &\theta _{4;2,2}\colon (x_1, x_2, e_1, e_2, c_{1,0}, c_{2,0})\to (X, X, X, X, X, X).
\end{align*}
Since $n=0$ if $t=1$ and $n=2$ if $t=2$, by Lemma~\ref{theta_e_i}, $\iota $ has $0$ (resp. 2) one-sided \refline s\ and $2$ (resp. $0$) two-sided \refline s when $t=1$ (resp. $t=2$). 
Thus, $\iota $ is \topconj to $\iota _{4;2,1}$ if $t=1$ and to $\iota _{4;2,2}$ if $t=2$. 

In the case $(g,s)=(4,9)$, the signature of $\Gamma $ is $\NEC{3}{m}{0}{0}$ and $\Gamma $ is generated by three glide reflections $d_1$, $d_2$, and $d_3$. 
Note that the corresponding involution $\iota $ has no fixed points. 
Hence this case holds the assumption of the condition~(2) in Proposition~\ref{thm_bems}, so the topological conjugacy class of $\iota $ is determined by the number $m=\# \{ j \in \{ 1, 2, 3 \} \mid \theta (d_j) = X \}$ and whether $m$ is congruent to 0 modulo 2 or not. 

By Table~\ref{table:NSKmap}, $m=1$ if $t=1$ and $m=2$ if $t=3$. 
For the case that $t=1$, we see that $\theta _{4;9,1}(d_1)=X$ and $\theta _{4;9,1}(d_i)=1$ for $i=2,3$. 
Hence the glide reflections $d_i$ and $d_1d_id_1^{-1}$ for $i=2,3$ are in $\ker\theta_{4;9,1}$.
Then, each of the glide reflections fixes a geodesic of $\mathbb{H}^2$, and the neighborhoods of the images of the geodesics are crosscaps of $N_4$.
Note that, by $d_1 \notin \ker\theta_{4;9,1}$, the glide reflection $d_1$ induces the involution $\iota $.
So, the image of the geodesic fixed by $d_1$ is a separating \refline, and $\iota$ swaps two of the four crosscaps with the other two. 
Hence $\iota $ is \topconj to $\iota _{4;9,1}$. 

For the case that $t=3$, we have $\theta _{4;9,3}(d_i)=X$ for $i=1,3$ and $\theta _{4;9,3}(d_2)=1$. 
Then for a fundamental domain $\mathcal{D}$ of $\Gamma $ as on the left-hand side in Figure~\ref{figure_iota493}, a fundamental domain $\widetilde{\mathcal{D}}$ of $\ker \theta _{4;9,3}\cong \pi _1(N_4)$ is obtained by $\widetilde{\mathcal{D}}=\mathcal{D}\cup d_3(\mathcal{D})$ as on the right-hand side in Figure~\ref{figure_iota493}. 
We can see that the boundary of $\widetilde{\mathcal{D}}$ is identified in the quotient $\widetilde{\mathcal{D}}/\ker \theta _{4;9,3}\approx N_4$ as on the right-hand side in Figure~\ref{figure_iota493} and the quotient $\widetilde{\mathcal{D}}/\ker \theta _{4;9,3}$ is homeomorphic to the surface $N$ as in Figure~\ref{figure_iota493}. 
The involution $\iota $ corresponding to $\theta _{4;9,3}$ is induced by $d_3$ and $\iota $ coincides with the homeomorphism which is induced by the antipodal action on $N$. 
Thus $\iota $ is \topconj to $\iota _{4;9,3}$. 
Therefore, the involution corresponding to $\theta _{g;s,t}$ in Table~\ref{table:NSKmap} is \topconj to $\iota _{g;s,t}$ and we have completed the proof of Proposition~\ref{prop_topconj}. 
\end{proof}

\begin{figure}[ht]
\includegraphics[scale=0.86]{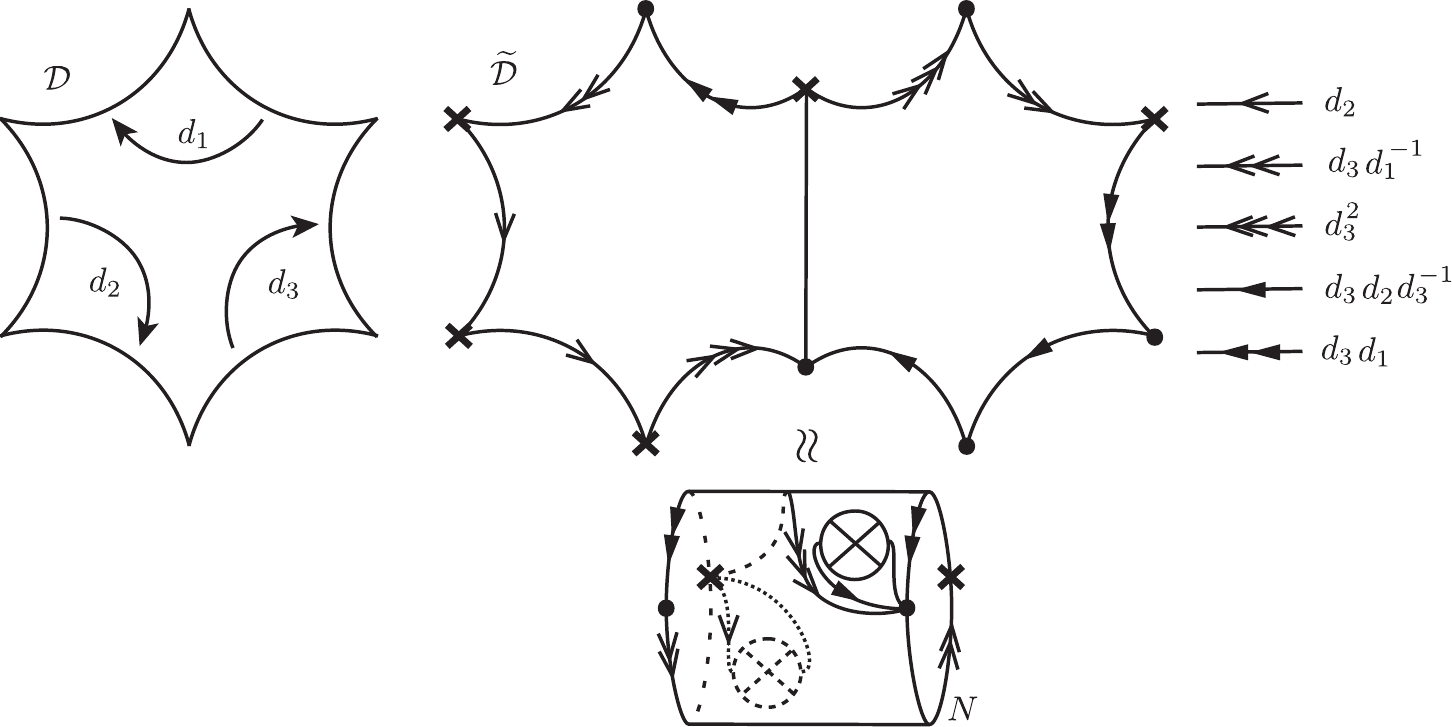}
\caption{Fundamental domains of $\Gamma $ and $\ker \theta _{4;9,3}$.}\label{figure_iota493}
\end{figure}

By Evans-Kolbe's result in~\cite{Evans-Kolbe}, any topological conjugacy classes of involutions on $N_g$ are represented by isometries, and by an argument in Section~\ref{section_NEC-group}, an involution on $N_g$ which is realized as an isometry is corresponding to an NSK-map of genus $g$. 
Hence Proposition~\ref{prop_topconj} implies that involutions described as in Figures~\ref{figure_involution_genus2}--\ref{figure_involution_genus5} give a list of representative of all topological conjugacy classes of involutions on $N_g$ for $2\leq g\leq 5$.

\section{Blowups on surfaces and topological conjugacy classes of involutions}\label{section_blowup}

\subsection{A relationship between signatures via blowup operations}

In this section, we observe some transitions of signatures by blowup operations on $N_g$.

We review the blowup and the blowdown on a surface. 
Let $e:\D \hookrightarrow {\rm int}~S$ be an embedding of the unit disk $\D \subset \C$ to the interior of a surface $S$. 
Put $D=e(\D )$. 
Let $S^\prime $ be the surface obtained from $S-{\rm int}~D$ by identifying $e(z)$ with $e(-z)$ for any $z\in \partial \D $. 
We call the manipulation that gives $S^\prime $ from $S$ the {\it blowup of} $S$ {\it on} $D$ (or at the center point of $D$). 
Note that the image $M\subset S^\prime$ of 
the regular neighborhood of $\partial D$ in $S-{\rm int}~D$ with respect to the blowup of $S$ on $D$ is a crosscap. Conversely, the {\it blowdown of} $S^\prime$ {\it on }$M$ is the following manipulation that gives $S$ from $S^\prime $; we paste a disk on the boundary component which is obtained by cutting $S^\prime $ along the core curve of $M$. The blowdown of $S^\prime $ on $M$ is the inverse manipulation of the blowup of $S$ on $D$.

Let $\iota $ be an involution on $N_g$ which is corresponding to an NSK-map $\theta \colon \Gamma \to \Z _2$ from an NEC group $\Gamma $ of signature $(h,\varepsilon , [(2)^{r}],\{ (-)^{k}\})$. 
By Lemma~\ref{lem_number_fixedpt}, $\iota $ has $r$ isolated fixed points and $k$ \refline s on $N_g$.
Suppose that the involution $\iota $ has a non-empty fixed point set. 
We take a fixed point $x_0\in N_g$ of $\iota $ and an embedding $e:\D \hookrightarrow N_g$ which satisfies the following conditions: 
\begin{enumerate}[label=(\arabic*)]
\item $e(0)=x_0$, and
\item for any $z\in \D $, either
\begin{enumerate}[label=(\alph*)]
\item $\iota (e(z))=e(-z)$ or
\item $\iota (e(z))=e(\overline{z})$,
\end{enumerate}
\end{enumerate}
where $\overline{z}\in \C$ is the complex conjugate of $z$.
The condition~(a) corresponds to the case where $x_0$ is an isolated fixed point of $\iota $ and the condition~(b) corresponds to the case where $x_0$ lies on a \refline \ of $\iota $.
On the upper left-hand side in Figure~\ref{figure_blowup_signature_isolpt} (resp. Figure~\ref{figure_blowup_signature_refline}), we describe the action of $\iota $ around $x_0$ when $x_0$ is an isolated fixed point of $\iota $ (resp. $x_0$ lies on a \refline \ of $\iota $).
Note that the surface $(N_g)^\prime $ which is obtained from $N_g$ by the blowup on $D:=e(\D )$ is homeomorphic to $N_{g+1}$. 
Hence we identify $(N_g)^\prime $ with $N_{g+1}$.
Then we define the self-homeomorphism $\tilde{\iota }$ on $N_{g+1}$ by $\tilde{\iota }([x]):=[\iota (x)]$ for any $x\in N_g-\mathrm{int}~D$.
By the definition and the conditions~(1) and (2) above, 
  for any $z\in \partial \D $, we can check that if $x_0$ is an isolated fixed point of $\iota $, then 
\[
[\iota (e(z))]=[e(-z)]=[e(z)]=[\iota (e(-z))]
\]
in $N_{g+1}$, and  if $x_0$ lies a \refline \ of $\iota $, then 
\[
[\iota (e(z))]=[e(\overline{z})]=[e(-\overline{z})]=[e(\overline{-z})]=[\iota (e(-z))]
\]
in $N_{g+1}$. 
Thus, $\tilde{\iota }$ is well-defined and also an involution on $N_{g+1}$. 
We call the involution $\tilde{\iota }$ the \emph{involution on $N_{g+1}$ obtained from $\iota $ on $N_g$ by the blowup of $N_g$ at the fixed point $x_0$}.
On the upper right-hand side in Figure~\ref{figure_blowup_signature_isolpt} and~\ref{figure_blowup_signature_refline}, we describe the action of $\tilde{\iota }$ around the crosscap obtained from a neighborhood of $\partial D$ in $N_g-\mathrm{int}~D$ by the blowup on $D$. 
Deep gray points represented by cross marks, and solid lines (curves) on the upper surfaces in Figure~\ref{figure_blowup_signature_isolpt} and \ref{figure_blowup_signature_refline} indicate the fixed points of $\iota $ and $\tilde{\iota }$. 
Recall that the signature of an involution is defined by the signature of the NEC group which is the domain of
the corresponding NSK-map, namely, we write $\sigma (\iota )=\sigma (\Gamma )$.  
We have the following proposition about a relationship between $\iota $ and $\tilde{\iota }$ by using the signatures.

\begin{prop}\label{prop_blowup_signature}
Let $\iota $ be an involution on $N_g$ with a signature 
\[
\sigma (\iota )=(h,\varepsilon , [(2)^{r}],\{ (-)^{k}\}), 
\]
$x_0\in N_g$ a fixed point of $\iota $, and $\tilde{\iota }$ the involution on $N_{g+1}$ obtained from $\iota $ by the blowup of $N_g$ at a fixed point $x_0$. Then the signature of $\tilde{\iota }$ is one of the following:
\begin{enumerate}[label=\textup{(\roman*)}]
\item  $\sigma (\tilde{\iota })=(h,\varepsilon , [(2)^{r-1}],\{ (-)^{k+1}\})$\quad if $x_0$ is an isolated fixed point of $\iota $,
\item  $\sigma (\tilde{\iota })=(h,\varepsilon , [(2)^{r+1}],\{ (-)^{k}\})$\quad if $x_0$ lies on a \refline \ of $\iota $.
\end{enumerate}
\end{prop}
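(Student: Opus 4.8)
The plan is to work directly with the NEC-group presentations from Convention~\ref{conv:signature} and track how the quotient orbifold $\Hy/\Gamma \approx N_g/\iota$ changes under the blowup. The key observation is that the blowup of $N_g$ at a fixed point $x_0$ of $\iota$ descends to a well-understood local surgery on the quotient orbifold: if $\tilde\Gamma$ is the NEC group corresponding to $\tilde\iota$, then $\Hy/\tilde\Gamma$ is obtained from $\Hy/\Gamma$ by a modification localized near the image point $\bar x_0 = p(x_0)$, where $p\colon N_g \to N_g/\iota$ is the quotient map. So first I would set up this picture: $\tilde\iota$ is the involution on $N_{g+1}$ defined in the paragraph preceding the proposition, and since blowing up commutes with passing to the quotient by the involution (the embedded disk $D$ is $\iota$-invariant by condition~(2)), the quotient $N_{g+1}/\tilde\iota$ is obtained from $N_g/\iota$ by blowing up at $\bar x_0$ in the orbifold sense.

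The heart of the argument is the two local analyses, which correspond exactly to the two cases (a) and (b) in the setup and are illustrated in Figures~\ref{figure_blowup_signature_isolpt} and~\ref{figure_blowup_signature_refline}. In case (i), $x_0$ is an isolated fixed point, so $\bar x_0$ is a cone point of angle $\pi$ (an $m_i = 2$ in the signature), contributed by an elliptic generator $x_i$ with $\theta(x_i) = X$ by Lemma~\ref{representatives_z2_action}. The $\iota$-action near $x_0$ is the $\pi$-rotation $z \mapsto -z$ (condition~(a)); after blowing up, the boundary circle $\partial\D$ with the identification $e(z)\sim e(-z)$ becomes the core of a crosscap, and the induced action $\tilde\iota$ near this crosscap (upper right of Figure~\ref{figure_blowup_signature_isolpt}) fixes the core curve pointwise — it is a reflection curve rather than an isolated fixed point. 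On the orbifold level, the cone point of order $2$ is replaced by a boundary component with no corner reflectors. Hence in the signature one $2$ is removed from the $[\,\cdot\,]$-list and one empty cycle $(-)$ is added to the $\{\,\cdot\,\}$-list, while the underlying surface of the orbifold — its genus $h$ and orientability type $\varepsilon$ — is unchanged because the surgery is supported in a disk. This gives $\sigma(\tilde\iota) = (h, \varepsilon, [(2)^{r-1}], \{(-)^{k+1}\})$, which is (i). To make this rigorous at the group-theoretic level I would note that the NEC group $\tilde\Gamma$ has a presentation obtained from that of $\Gamma$ by replacing the relation $x_i^2 = 1$ and the generator $x_i$ by a new boundary-component package $e_{k+1}$, $c_{k+1,0}$ with $c_{k+1,0}^2 = 1$ and $c_{k+1,s_{k+1}} = e_{k+1}^{-1}c_{k+1,0}e_{k+1}$ (with $s_{k+1}=0$), together with the corresponding change in the long relation; that $\ker(\tilde\theta) \cong \pi_1(N_{g+1})$ follows since the quotient $\Hy/\ker\tilde\theta \approx N_{g+1}$ by construction.

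In case (ii), $x_0$ lies on a reflection curve, so $\bar x_0$ lies in the interior of the $i$-th boundary component of $\Hy/\Gamma$ (a boundary point, not a corner, since $s_i = 0$). The $\iota$-action near $x_0$ is the reflection $z\mapsto \bar z$ (condition~(b)); blowing up turns $\partial\D$ with $e(z)\sim e(-z)$ into the core of a crosscap, and the induced $\tilde\iota$ near that crosscap (upper right of Figure~\ref{figure_blowup_signature_refline}) has the reflection axis meeting the crosscap core in a single point that is now an isolated fixed point of $\tilde\iota$ — i.e., a new cone point of order $2$ appears, while the boundary component persists (it may be pinched and re-smoothed, but topologically $\Hy/\tilde\Gamma$ still has $k$ boundary components, and again $h$ and $\varepsilon$ are unchanged because the surgery is local). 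Thus one $2$ is added to the $[\,\cdot\,]$-list and the $\{\,\cdot\,\}$-list is unchanged, giving $\sigma(\tilde\iota) = (h, \varepsilon, [(2)^{r+1}], \{(-)^{k}\})$, which is (ii). As before, I would record the corresponding modification of the presentation: a new elliptic generator $x_{r+1}$ with $x_{r+1}^2 = 1$ is introduced (arising from the fact that the glide/reflection structure along the old boundary geodesic now acquires an order-$2$ rotation in the stabilizer of a point above $\bar x_0$), inserted into the long relation accordingly, and $\ker\tilde\theta \cong \pi_1(N_{g+1})$ again follows from $\Hy/\ker\tilde\theta \approx N_{g+1}$.

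The main obstacle is making the local orbifold surgery genuinely rigorous rather than merely picture-based — specifically, identifying the modified NEC group $\tilde\Gamma$ and its presentation cleanly, and verifying that the sign $\varepsilon$ and handle/crosscap number $h$ of the quotient orbifold really do not change. For $\varepsilon$ and $h$ this is clear because the blowup alters the surface only inside an embedded disk, so the complement of a disk in $N_g/\iota$ is unaffected; the only subtlety is confirming that in case (i) the newly created feature is a boundary component (contributing nothing to $h$ or $\varepsilon$) and in case (ii) the newly created feature is an interior cone point, which is exactly what the local models in Figures~\ref{figure_blowup_signature_isolpt} and~\ref{figure_blowup_signature_refline} show. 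Given Lemma~\ref{lem_number_fixedpt} (isolated fixed points $\leftrightarrow$ entries of the $[\,\cdot\,]$-list, reflection curves $\leftrightarrow$ entries of the $\{\,\cdot\,\}$-list), once the local count of fixed points and reflection curves of $\tilde\iota$ is established — $r-1$ isolated points and $k+1$ reflection curves in case (i), $r+1$ isolated points and $k$ reflection curves in case (ii) — the stated signatures follow immediately, so the proof ultimately reduces to the two elementary local computations described above.
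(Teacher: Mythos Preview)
Your approach is essentially the paper's: both arguments reduce to a local analysis of the action of $\tilde\iota$ on the core $\mu$ of the new crosscap, the observation that $h$ and $\varepsilon$ are unchanged because the surgery is supported in a disk, and the final appeal to Lemma~\ref{lem_number_fixedpt}. The NEC-presentation modifications you sketch are extra scaffolding the paper does not need --- it simply computes $\tilde\iota([e(z)])$ on $\mu$ directly and counts fixed points --- and your case~(ii) description has a small slip: the new isolated fixed point of $\tilde\iota$ is $[e(\sqrt{-1})]$ (coming from $\overline{\pm\sqrt{-1}} = \mp\sqrt{-1}$ together with the antipodal identification on $\partial\D$), \emph{not} the point $[e(1)]$ where the old reflection curve meets $\mu$, which remains a non-isolated fixed point on the surviving reflection curve.
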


\begin{proof}
First, we show that the number of the isolated fixed points and that of the reflection curves do not change before and after the blowup at a fixed point of an involution on $N_g$, except in a neighborhood of the fixed point.
By Lemma~\ref{lem_number_fixedpt}, the integer $r$ is the number of isolated fixed points of $\iota $ and $k$ is the number of \refline s of $\iota $. 
Let $e:\D \hookrightarrow N_g$ be an embedding of the unit disk $\D \subset \C $ such that $e$ defines $\tilde{\iota }$ and $e(0)=x_0$ as above, $D\subset N_g$ the image of $e$, and $\mu \subset N_{g+1}$ the image of $\partial D$ with respect to the blowup on $D$.
Remark that $\iota (D)=D$ and $\mu $ is a one-sided simple closed curve on $N_{g+1}$ which is a core curve of the crosscap obtained by the blowup on $D$. 
Let $p \colon N_g-\mathrm{int}~D\twoheadrightarrow N_{g+1}$ be the quotient map induced by the blowup on $D$. 
By the well-definedness of $\tilde{\iota }$, the following diagram is commutative:
\[
  \xymatrix{
    N_g-\mathrm{int}~D \ar[r]^{\iota |_{N_g-\mathrm{int}~D}} \ar@{>>}[d]_{p } & N_g-\mathrm{int}~D \ar@{>>}[d]^{p } \\
    N_{g+1} \ar[r]_{\tilde{\iota }} & N_{g+1}.
  }
\] 
The restriction of the quotient map $p $ 
to $N_g-D$ is injective.
So, the isolated fixed points (resp. the \refline s) of $\tilde{\iota }$ not intersecting with $\mu $ coincide with the image by $p $ of the isolated fixed points (resp. the \refline s) of $\iota $ not intersecting with $D$. 

Next, we show that the genus and the orientability of the signature of the involution on $N_{g+1}$ obtained by the blowup are the same as those of the original involution on $N_g$.
We denote by $\mathcal{N}\subset N_g$ a regular neighborhood of $D$ in $N_g$ such that $\iota (\mathcal{N})=\mathcal{N}$. 
The images of $\mathcal{N}\subset N_g$ in $N_g/\iota $ and $p (\mathcal{N}-\mathrm{int}~D)\subset N_{g+1}$ in $N_{g+1}/\tilde{\iota }$ are orientable surfaces of genus~0, respectively, as on the lower side in Figures~\ref{figure_blowup_signature_isolpt} and \ref{figure_blowup_signature_refline}. 
Hence $N_g/\iota $ and $N_{g+1}/\tilde{\iota }$ have the same genus and the same orientability.   

Suppose that $x_0\in N_g$ is an isolated fixed point of $\iota $, then the embedding $e$ satisfies the conditions~(1) and (2) (a) above.
Since the point $x_0$ is the unique fixed point of the restriction of $\iota $ to $D$, the restriction $\iota |_{N_g-D}$ has $r-1$ isolated fixed points and $k$ \refline s. 
For any $z\in \partial \D $, we can show that 
\[
\tilde{\iota }([e(z)])=[\iota (e(z))]=[e(-z)]=[e(z)]
\]
in $N_{g+1}$. 
Hence the restriction of $\tilde{\iota }$ to $\mu $ is the identity map on $\mu $, i.e. $\mu $ is a \refline \ of $\tilde{\iota }$. 
Therefore, the involution $\tilde{\iota }$ has $r-1$ isolated fixed points and $k+1$ \refline s, 
so the signature of $\tilde{\iota }$ is $(h,\varepsilon , [(2)^{r-1}],\{ (-)^{k+1}\})$ when $x_0$ is an isolated fixed point of $\iota $. 

Suppose that $x_0\in N_g$ lies on a \refline \ of $\iota $, then the embedding $e$ satisfies the conditions~(1) and (2) (b) above.
Since the subset $e(\{ \mathrm{Re}(z)\mid z\in \D \} )\subset N_g$ is the fixed point set of the restriction of $\iota $ to~$D$, the restriction $\iota |_{N_g-\mathrm{int}~D}$ has $r$ isolated fixed points, $k-1$ \refline s, and one properly embedded arc $\alpha$ fixed by it. 
Then, the endpoints of $\alpha$ are $e(1)$ and $e(-1)$.
For any $z\in \partial \D $, we can show that 
\[
\tilde{\iota }([e(z)])=[\iota (e(z))]=[e(\overline{z})]
\]
in $N_{g+1}$. 
Hence the fixed points of the restriction of $\tilde{\iota }$ to $\mu $ are $[e(1)]$ and $[e(\sqrt{-1})]$ since we have $\overline{\pm 1}=\pm 1$, $[e(\overline{\pm \sqrt{-1}})]=[e(\mp \sqrt{-1})]=[e(\pm \sqrt{-1})]$, and $\overline{z}\not= \pm z$ for any $z\in \partial \D -\{ \pm 1,\ \pm \sqrt{-1}\}$. 
By the definition of the arc $\alpha $, the point $[e(1)]\in N_{g+1}$ lies on the \refline \ $p (\alpha )$ of $\tilde{\iota }$ and $[e(\sqrt{-1})]$ is an isolated fixed point of $\tilde{\iota }$ (see the lower side in Figure~\ref{figure_blowup_signature_refline}). 
Thus the involution $\tilde{\iota }$ has $r+1$ isolated fixed points and $k$ \refline s.
Therefore, the signature of $\tilde{\iota }$ is $(h,\varepsilon , [(2)^{r+1}],\{ (-)^{k}\})$ when $x_0$ lies on a \refline \ of $\iota $. 
We have completed the proof of Proposition~\ref{prop_blowup_signature}.

\end{proof}

\begin{figure}[ht]
\includegraphics[scale=0.8]{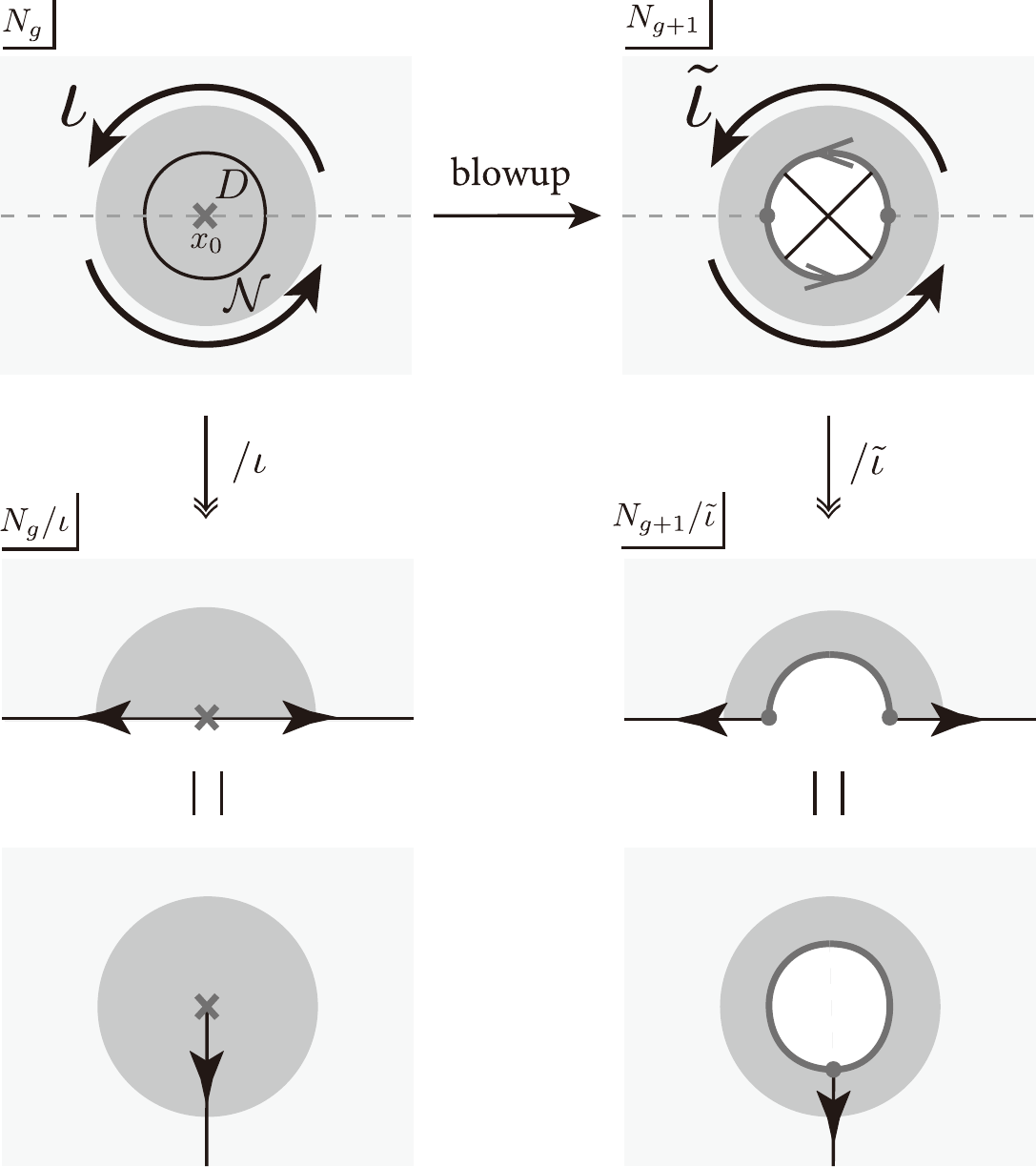}
\caption{Local descriptions of the actions of $\iota $ and $\tilde{\iota }$ around $x_0$ and the crosscap obtained by blowup on $D$, respectively, when $x_0$ is an isolated fixed point of $\iota $.}\label{figure_blowup_signature_isolpt}
\end{figure}

\begin{figure}[ht]
\includegraphics[scale=0.8]{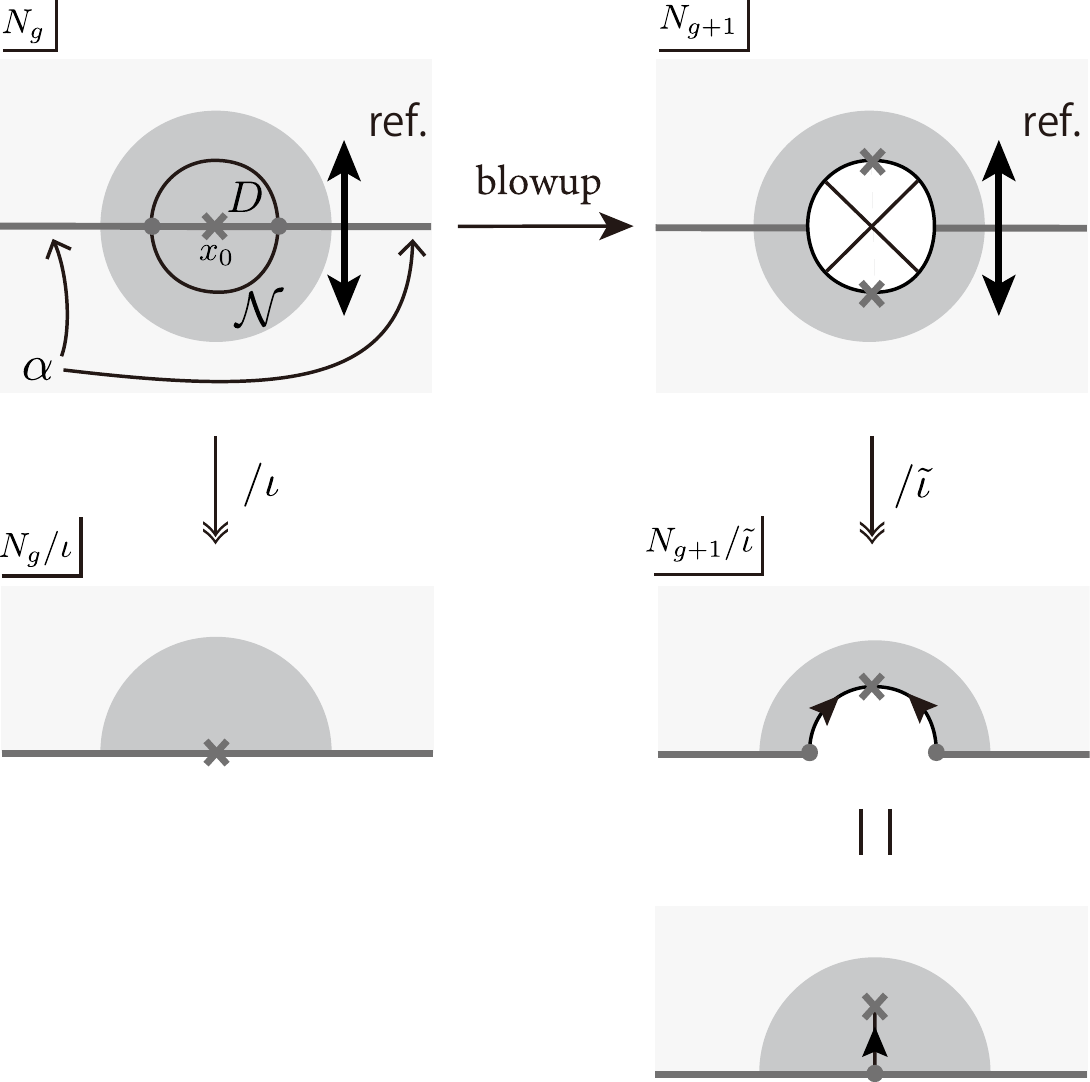}
\caption{Local descriptions of the actions of $\iota $ and $\tilde{\iota }$ around $x_0$ and the crosscap obtained by blowup on $D$, respectively, when $x_0$ lies a \refline \ of $\iota $.}\label{figure_blowup_signature_refline}
\end{figure}

By an argument in the proof of Proposition~\ref{prop_blowup_signature}, we immediately obtain the following corollary.

\begin{cor}\label{cor_blowup_signature_isol}
Let $\iota $ be an involution on $N_g$, $\tilde{\iota }$ the involution on $N_{g+1}$ which is obtained from $\iota $ by the blowup at a fixed point $x_0$ of $\iota $, and $n_1$ (resp. $n_2$) the number of one-sided (resp. two-sided) \refline s of $\iota $. 
\begin{enumerate}[label=\textup{(\arabic*)}]
    \item If $x_0$ is an isolated fixed point of $\iota $, then the number of one-sided (resp. two-sided) \refline s of $\tilde{\iota }$ is $n_1+1$ (resp. $n_2$). 
    \item If $x_0$ lies in a one-sided (resp. two-sided) \refline \ of $\iota $, then the number of one-sided (resp. two-sided) \refline s of $\tilde{\iota }$ is $n_1-1$ (resp. $n_2-1$) and the number of two-sided (resp. one-sided) \refline s of $\tilde{\iota }$ is $n_1+1$ (resp. $n_2+1$). 
\end{enumerate} 
\end{cor}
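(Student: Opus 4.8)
The plan is to extract everything from the proof of Proposition~\ref{prop_blowup_signature}, adding one observation about the sidedness of the reflection curve produced by the blowup. Keep the notation of that proof: $D=e(\mathbb{D})$, $\mu\subset N_{g+1}$ is the image of $\partial D$, and $p\colon N_g-\mathrm{int}\,D\twoheadrightarrow N_{g+1}$ is the quotient map, with $\tilde\iota\circ p=p\circ\iota|_{N_g-\mathrm{int}\,D}$. The restriction of $p$ to $N_g-D$ is injective, so on a regular neighborhood of any reflection curve of $\iota$ disjoint from $D$ it is an embedding onto its image; in particular it preserves one- versus two-sidedness there. Hence the whole count reduces to identifying which reflection curves of $\iota$ are destroyed by the blowup, which reflection curve of $\tilde\iota$ is created, and what the sidedness of the created one is.

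First I would dispose of case (1), where $x_0$ is an isolated fixed point. Then $x_0$ lies on no reflection curve, so, taking $D$ small, the $n_1$ one-sided and $n_2$ two-sided reflection curves of $\iota$ all survive with unchanged sidedness as reflection curves of $\tilde\iota$, and by the proof of Proposition~\ref{prop_blowup_signature} the only additional reflection curve of $\tilde\iota$ is $\mu$, which is by construction the core of the crosscap created by the blowup, hence one-sided. This yields $n_1+1$ one-sided and $n_2$ two-sided reflection curves of $\tilde\iota$.

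Next, case (2), where $x_0$ lies on a reflection curve $\gamma$ of $\iota$. By the proof of Proposition~\ref{prop_blowup_signature}, $\gamma$ is no longer fixed by $\tilde\iota$; instead $\iota|_{N_g-\mathrm{int}\,D}$ fixes the properly embedded arc $\alpha:=\gamma-\mathrm{int}\,D$ with $\partial\alpha=\{e(1),e(-1)\}$, and $p(\alpha)$ is the unique reflection curve of $\tilde\iota$ meeting $\mu$ (the only other new fixed datum being the isolated fixed point $[e(\sqrt{-1})]$). Taking $D$ small, all other reflection curves of $\iota$ survive with unchanged sidedness. It remains to determine the sidedness of $p(\alpha)$, and the key claim is that the blowup flips it: traversing $p(\alpha)$ reverses a local orientation of $N_{g+1}$ if and only if traversing $\gamma$ does \emph{not} reverse a local orientation of $N_g$. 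I would verify this by a local computation at $q:=[e(1)]=[e(-1)]\in N_{g+1}$: in the coordinates used in the proof of Proposition~\ref{prop_blowup_signature}, $\gamma\cap D$ is the real diameter, and near $q$ the two sheets of $N_{g+1}$ are glued by the differential of $z\mapsto -z$ at $z=1$, namely $-\mathrm{id}$; hence a normal line field carried once around $p(\alpha)$ picks up exactly one more sign reversal than the same field carried around $\gamma$. Therefore $p(\alpha)$ is two-sided when $\gamma$ is one-sided, giving $n_1-1$ one-sided and $n_2+1$ two-sided reflection curves of $\tilde\iota$, and symmetrically $p(\alpha)$ is one-sided when $\gamma$ is two-sided.

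The only genuinely new ingredient — and therefore the main obstacle — is this sidedness-flip claim for $p(\alpha)$. It is visible in Figure~\ref{figure_blowup_signature_refline}, but a careful justification must track the orientation reversal introduced by the antipodal gluing on $\partial D$, either by the normal-line-field argument above or, alternatively, by identifying $p(\alpha)$ with the reflection curve associated to one of the translations $e_j$ in the NEC group of $\tilde\iota$ and applying Lemma~\ref{theta_e_i}. Everything else follows immediately from the proof of Proposition~\ref{prop_blowup_signature} together with the fact that $p$ is injective, hence sidedness-preserving, away from $\mu$.
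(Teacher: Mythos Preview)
Your proposal is correct and follows exactly the paper's approach: the paper gives no separate proof, stating only that the corollary is immediate from the argument in the proof of Proposition~\ref{prop_blowup_signature}, and you have faithfully extracted that argument. The one point you make explicit --- the sidedness-flip of $p(\alpha)$ in case~(2), justified via the differential of the antipodal gluing --- is left by the paper to Figure~\ref{figure_blowup_signature_refline}, so your write-up is if anything more complete than the original.
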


By using Proposition~\ref{prop_blowup_signature} and Corollary~\ref{cor_blowup_signature_isol}, 
we have the following proposition.

\begin{prop}\label{prop_blowup_even-odd}
Let $\tilde{\iota }$ be an involution on $N_{g+1}$ for $g\in \{ 2, 4\}$. 
Then, there exist an involution $\iota $ on $N_{g}$ and its fixed point $x_0\in N_{g}$ such that $\tilde{\iota }$ is obtained from $\iota $ by the blowup of $N_g$ at $x_0$.
\end{prop}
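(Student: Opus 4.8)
The plan is to argue that for every involution $\tilde\iota$ on $N_{g+1}$ with $g\in\{2,4\}$ (so $g+1\in\{3,5\}$), one can find a crosscap $M\subset N_{g+1}$ whose core curve $\mu$ is a \refline\ or an isolated-fixed-point-containing one-sided curve of $\tilde\iota$, blow down on $M$, and recognize the resulting involution $\iota$ on $N_g$ as one already appearing in the list of Figures~\ref{figure_involution_genus2}--\ref{figure_involution_genus5}; then $\tilde\iota$ is exactly the blowup of $\iota$ at the relevant fixed point. Since by Proposition~\ref{prop_topconj} the involutions on $N_{g+1}$ are exhausted (up to topological conjugacy) by the $\iota_{g+1;s,t}$, it suffices to check the statement for each entry of Table~\ref{table:NSKmap} in genus $3$ and genus $5$, and a blowdown is well-defined up to topological conjugacy once we know which conjugacy class it lands in.

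The key steps, in order. First I would set up the ``inverse blowup'' (blowdown) at the level of NSK-maps: starting from the signature $(h,\varepsilon,[(2)^{r}],\{(-)^k\})$ of $\tilde\iota$, Proposition~\ref{prop_blowup_signature} tells us the two possible signatures of a putative $\iota$ on $N_g$ whose blowup gives $\tilde\iota$, namely $(h,\varepsilon,[(2)^{r+1}],\{(-)^{k-1}\})$ (blowing down a one-sided \refline, case (i) read backwards) or $(h,\varepsilon,[(2)^{r-1}],\{(-)^{k+1}\})$ (blowing down a crosscap whose core is a two-sided \refline, case (ii) read backwards — here the blowup point $x_0$ was on a \refline, turning one two-sided \refline\ into one isolated fixed point plus one two-sided \refline, so I must be careful which of the two blowup types applies). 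Second, I would observe that since $g+1\in\{3,5\}$ is odd, $N_{g+1}$ contains at least one crosscap; more to the point, I must show that $\tilde\iota$ admits an \emph{invariant} crosscap of one of the admissible types. This is where I will use the explicit pictures: for each $\iota_{g+1;s,t}$ in Figures~\ref{figure_involution_genus3} and \ref{figure_involution_genus5}, I point to a concrete $\iota$-invariant disk $D$ in $N_{g+1}$ cut out around a one-sided curve that is either pointwise fixed (a one-sided \refline, giving case (i) blowdown) or contains exactly one isolated fixed point and is swapped-with-itself antipodally along the core (case-(ii) blowdown). Third, I would identify the blowdown $\iota$: its signature is forced by Proposition~\ref{prop_blowup_signature}, and its numbers of one-sided/two-sided \refline s are forced by Corollary~\ref{cor_blowup_signature_isol}, so by Proposition~\ref{thm_bems} (via Proposition~\ref{prop_topconj}) its topological conjugacy class on $N_g$ is pinned down; hence $\iota$ is topologically conjugate to some $\iota_{g;s',t'}$, and by construction its blowup at the chosen fixed point is $\tilde\iota$.

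A cleaner route, which I would actually present, is purely combinatorial and avoids checking pictures one-by-one: for $g+1\in\{3,5\}$ and any NSK-map $\theta\colon\Gamma\to\Z_2$ with signature $(h,\varepsilon,[(2)^r],\{(-)^k\})$, note that $N_{g+1}$ being odd-genus forces, via the Riemann--Hurwitz-type identity relating $\chi(N_{g+1})$ to $h,\varepsilon,r,k$ and the parity of $g+1$, that \emph{either} $r\ge 1$ (there is an isolated fixed point, whose blowdown I realize by the case-(i) inverse, lowering genus by $1$) \emph{or} $k\ge 1$ with an appropriate one-sided \refline\ (whose blowdown is the case-(ii) inverse) — one checks that $r=k=0$ is impossible in odd genus, or if it occurs (free involutions, $\varepsilon=-$) then $N_{g+1}$ would be a double cover of $N_{h}$ with $g+1$ even, contradicting $g+1$ odd. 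Then I invoke Proposition~\ref{prop_blowup_signature}/Corollary~\ref{cor_blowup_signature_isol} as above to fix $\sigma(\iota)$ and its \refline-type data, which by Proposition~\ref{prop_topconj} exhibits $\iota$ as an involution on $N_g$ in our list. \textbf{The main obstacle} I anticipate is exactly the bookkeeping for the case-(ii) inverse: blowing \emph{down} a crosscap does not in general recover an involution — one must check that the candidate $D\subset N_{g+1}$ meeting an isolated fixed point and a two-sided \refline\ (the configuration produced on the lower side of Figure~\ref{figure_blowup_signature_refline}) actually arises for every odd-genus $\tilde\iota$ with $r\ge 1$, and that the blowdown map is well-defined and an involution, i.e. the local model near $[e(\sqrt{-1})]$ and $[e(1)]$ really is the ``before'' picture. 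Verifying this uniformly (rather than case-by-case over Table~\ref{table:NSKmap}) is the part that needs genuine care; in the write-up I expect to fall back on the finite list in genera $3$ and $5$ and exhibit the blowdown crosscap explicitly in each of the few cases, which is routine given Figures~\ref{figure_involution_genus3} and \ref{figure_involution_genus5}.
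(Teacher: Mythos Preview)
Your proposal goes in the reverse direction from the paper and, as written, has a real gap in the ``cleaner route''.

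The paper never blows down. It works forward: for each conjugacy class $[\tilde\iota]$ on $N_{g+1}$ (read off from Table~\ref{table:NSKmap}), it simply names an involution $\iota_{g;s',t'}$ on $N_g$ from the same table, picks a fixed point of the stated type (isolated or on a \refline), and applies Proposition~\ref{prop_blowup_signature} to compute $\sigma(\widetilde{\iota_{g;s',t'}})$. Since for $g+1\in\{3,5\}$ almost every class is determined by its signature alone, this already pins down $[\tilde\iota]$; for the two exceptional classes $\iota_{5;3,1}$ and $\iota_{5;3,2}$ the paper adds Corollary~\ref{cor_blowup_signature_isol} to track one- versus two-sided \refline s and then invokes Proposition~\ref{thm_bems}. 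No blowdown, no invariant-crosscap hunt, no well-definedness check.

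Your blowdown route has two genuine issues beyond the bookkeeping you flag. First, your case-(ii) inverse signature is wrong: if $\sigma(\tilde\iota)=(h,\varepsilon,[(2)^r],\{(-)^k\})$ comes from blowing up at a point on a \refline, then by Proposition~\ref{prop_blowup_signature}(ii) one has $\sigma(\iota)=(h,\varepsilon,[(2)^{r-1}],\{(-)^{k}\})$, not $\{(-)^{k+1}\}$. Second, and more seriously, your parity argument only shows that an involution on odd-genus $N_{g+1}$ has a fixed point; it does \emph{not} produce a $\tilde\iota$-invariant crosscap of the correct local type, nor does it guarantee that the blowdown surface is $N_g$ rather than $\Sigma_{g/2}$ (blowing down a crosscap in $N_{g+1}$ can yield an orientable surface when the complement of the core is orientable). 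You would need, at minimum, that every such $\tilde\iota$ has a one-sided \refline\ --- which does happen to be true in all the listed cases, but you have not argued it, and establishing it case-by-case from Table~\ref{table:NSKmap} is no shorter than the paper's forward check.

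Your fallback (exhibit the crosscap in each picture) would work, but at that point you are doing a case analysis of the same length as the paper's, only with the extra burden of verifying that each blowdown is well defined and lands in $N_g$. The forward direction avoids all of this: the existence of $\iota$ is free from the classification on $N_g$, and Proposition~\ref{prop_blowup_signature} plus Corollary~\ref{cor_blowup_signature_isol} do all the matching.
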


\begin{proof}
Let $\tilde{\iota }$ be an involution on $N_{3}$. 
By Proposition~\ref{prop_topconj} and Table~\ref{table:NSKmap}, there exists an integer $s\in \{ 1, 2, 3\}$ such that $\iota $ is \topconj to $\iota _{3;s}$. 
Signatures of all topological conjugacy classes of involutions on $N_2$ (resp. $N_3$) are listed on the left-hand (resp. right-hand) side of Figure~\ref{table_blowup_genus2-3}.  
\begin{figure}[ht]
\includegraphics[scale=1.0]{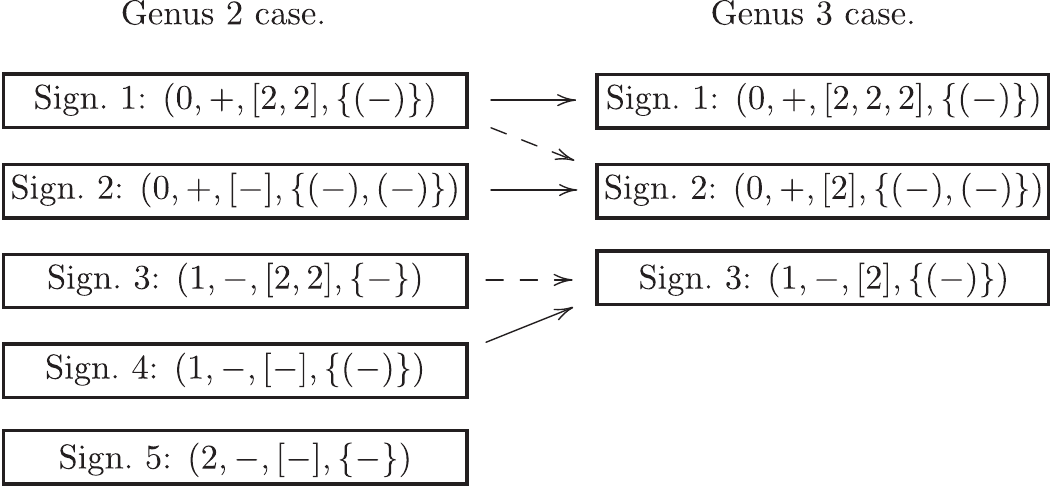}
\caption{The flow by blowups between topological conjugacy classes of involutions on $N_2$ and $N_3$.}\label{table_blowup_genus2-3}
\end{figure}
Recall that the involution $\iota _{g;s}$ on $N_g$ for $g\in \{ 2,3\}$ is defined for $s\in \{ 1,2,3,4,5\} $ when $g=2$, and for $s\in \{1,2,3\} $ when $g=3$ (see Table~\ref{table:NSKmap}). 
Since $\sigma (\iota _{2;1})=(0,+,[2,2],\{ (-)\} )$, $\sigma (\iota _{2;2})=(0,+,[-],\{ (-),(-)\})$, and $\sigma (\iota _{2;3})=(1,-,[2,2],\{ -\})$, by Proposition~\ref{prop_blowup_signature},  we have
\begin{align*}
\sigma (\widetilde{\iota _{2;1}})&=(0,+,[2,2,2],\{ (-)\} ) = \sigma(\iota_{3;1}) \  \text{ when $x_0 \in $ (\refline)},\\
\sigma (\widetilde{\iota _{2;2}})&=(0,+,[2],\{ (-),(-)\} ) = \sigma(\iota_{3;2}), \\
\sigma (\widetilde{\iota _{2;3}})&=(1,-,[2],\{ (-)\} ) = \sigma(\iota_{3;3}). 
\end{align*}
Therefore, $\widetilde{\iota _{2;s}}$ is \topconj to $\iota _{3,s}$ for $s\in \{ 1, 2, 3\}$, namely, the involution $\tilde{\iota }$ is obtained from an involution $\iota $ on $N_2$ by the blowup at a fixed point of $\iota $. 
Each arrow in Figure~\ref{table_blowup_genus2-3} shows that the signature at the terminal is obtained from that at the origin by the blowup at a fixed point.
Whether the fixed point is an isolated fixed point or contained in a reflection curve of the involution of $N_2$ is indicated by whether the arrow is dotted line or solid line.
Figure~\ref{table_blowup_genus2-3} gives the complete flow from topological conjugacy classes of involutions on $N_2$ to topological conjugacy classes of involutions on $N_3$ by blowups. 

Let $\tilde{\iota }$ be an involution on $N_{5}$. 
In this case, we also proceed by an argument similar to the genus $3$ case.  
We can see that a topological conjugacy class of an involution on $N_g$ for $g\in \{ 4,5\}$ is not determined by only the signature of the involution  (see Table~\ref{table:NSKmap}). 
All topological conjugacy classes of NSK-maps (that is corresponding to topological conjugacy classes of involutions by Propositions~\ref{prop_topconj_inv-NSK} and \ref{prop_topconj}) or corresponding signatures of genus $4$ (resp. of genus $5$) are listed on the left-hand (resp. right-hand) side of Figure~\ref{table_blowup_genus4-5}. 
Remark that we omit a symbol of NSK-map in Figure~\ref{table_blowup_genus4-5} if the signature determines the topological conjugacy class of the involution.

\begin{figure}[ht]
\includegraphics[scale=1.0]{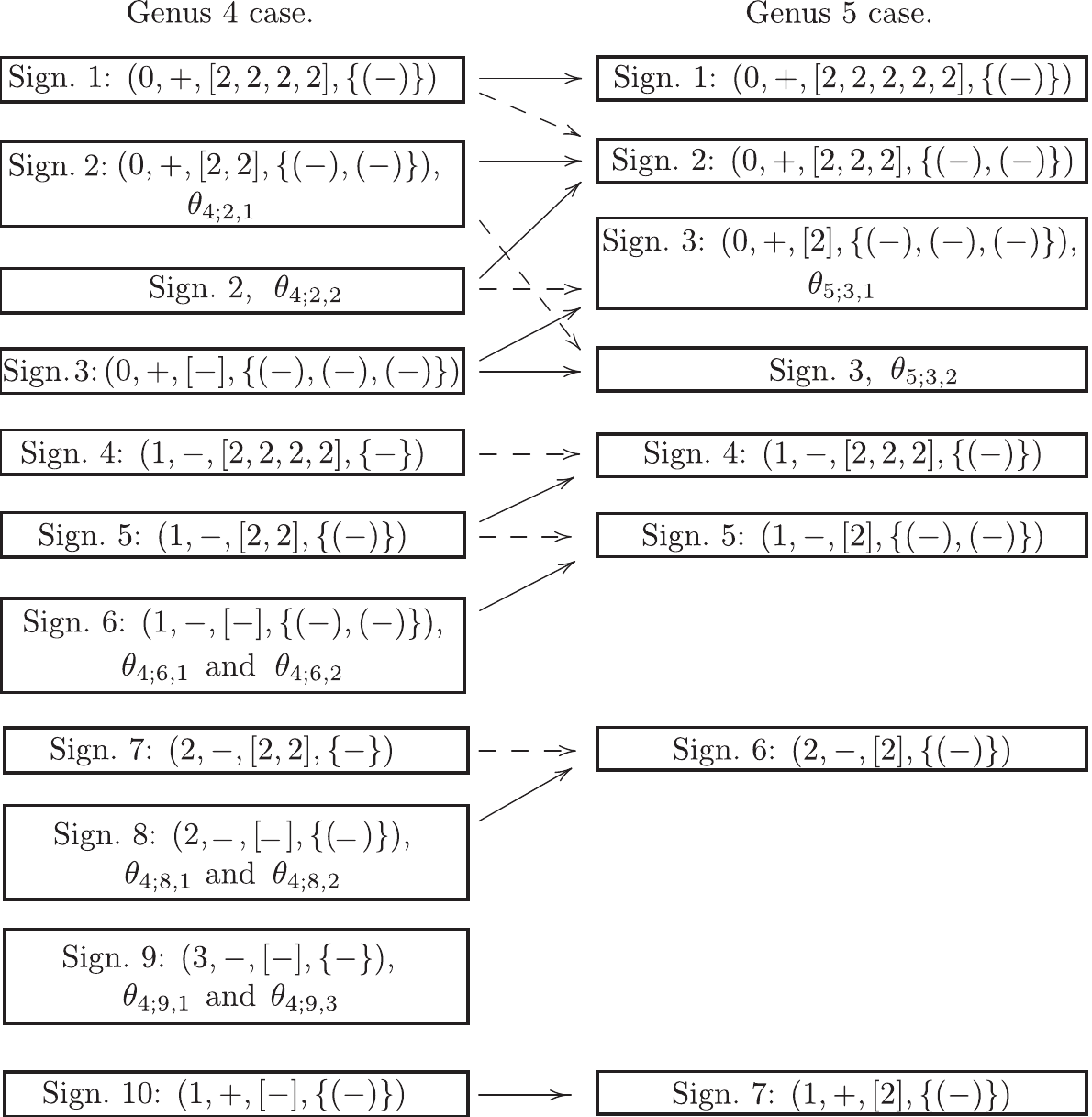}
\caption{The flow by blowups between topological conjugacy classes of involutions on $N_4$ and $N_5$. An arrow by a dotted line (resp. a solid line) indicates that an involution on $N_5$ corresponding to the signature and the NSK-map in the terminal of the arrow is obtained from an involution on $N_4$ corresponding to the signature and the NSK-map in the origin of the arrow by the blowup at an isolated fixed point (resp. a point lying in a \refline ) of the involution on $N_4$.}\label{table_blowup_genus4-5}
\end{figure}
By Proposition~\ref{prop_topconj} and Table~\ref{table:NSKmap}, there exist integers $s\in \{ 1, 2, \dots ,7\}$ and $t\in \{ 1,2,3\}$ such that $\tilde{\iota }$ is topological conjugate to $\iota _{5;s,t}$. 
Recall that we omit $t$ in the notation of $\iota _{g;s,t}$, i.e. we write $\iota _{g;s}$, if the signature determine the topological conjugacy class of the involution.
Since $\sigma (\iota _{4;1})=(0,+,[2,2,2,2],\{ (-)\} )$, $\sigma (\iota _{4;2})=(0,+,[2,2],\{ (-),(-)\})$, $\sigma (\iota _{4;5})=(1,-,[2,2],\{ (-)\})$, $\sigma (\iota _{4;7})=(2,-,[2,2],\{ -\})$, and $\sigma (\iota _{4;10})=(1,+,[-],\{ (-)\})$, by Proposition~\ref{prop_blowup_signature},  we have
\begin{align*}
\sigma (\widetilde{\iota _{4;1}})&=(0,+,[2,2,2,2,2],\{ (-)\} ) =\sigma (\iota _{5;1})\ \text{ when $x_0 \in \text{(\refline)}$},\\
\sigma (\widetilde{\iota _{4;1}})&=(0,+,[2,2,2],\{ (-),(-)\} ) =\sigma (\iota _{5;2})\ \text{ when $x_0$ : an isolated fixed point},\\
\sigma (\widetilde{\iota _{4;2,t}})&=(0,+,[2],\{ (-),(-),(-)\}) =\sigma (\iota _{5;3,t^\prime })\ \text{ when $x_0$ : an isolated fixed point,} \\
\sigma (\widetilde{\iota _{4;5}})&=(1,-,[2,2,2],\{ (-)\} ) =\sigma (\iota _{5;4})\ 
\text{ when $x_0 \in \text{(\refline)}$}, \\
\sigma (\widetilde{\iota _{4;5}})&=(1,-,[2],\{ (-),(-)\} ) =\sigma (\iota _{5;5})\ 
\text{ when $x_0$ : an isolated fixed point}, \\
\sigma (\widetilde{\iota _{4;7}})&=(2,-,[2],\{ (-)\}) =\sigma (\iota _{5;6}), \\
\sigma (\widetilde{\iota _{4;10}})&=(1,+,[2],\{ (-)\}) =\sigma (\iota _{5;7}).
\end{align*}
Topological conjugacy classes of involutions on $N_5$ are determined by their signatures, as shown in Table~\ref{table:NSKmap}, except for $\iota_{5;3,1}$ and $\iota_{5;3,2}$. 
Hence, except for the two classes, the topological conjugacy class of each involution of $N_5$ is obtained from the blowup at a fixed point of a involution of $N_4$.

In the remainder of the proof, we consider the case of $\iota_{5;3,1}$ and $\iota_{5;3,2}$.
By Proposition~\ref{prop_topconj}, these two involutions correspond to the topological conjugacy classes of NSK-maps which are represented by
\begin{align*}
\theta _{5;3,1}&\colon (x_1,e_1,e_2,e_3,c_{1,0},c_{2,0},c_{3,0})\to (X,X,X,X,X,X,X)\ \text{ and}\\
\theta _{5;3,2}&\colon (x_1,e_1,e_2,e_3,c_{1,0},c_{2,0},c_{3,0})\to (X,X,1,1,X,X,X),
\end{align*}
as shown in Table~\ref{table:NSKmap}. 
By Lemma~\ref{theta_e_i}, the involution $\iota _{5;3,1}$ has three one-sided \refline s\ and no two-sided \refline s, and $\iota _{5;3,2}$ has one one-sided \refline \ and two two-sided \refline s.  
Let $\iota $ be an involution on $N_4$ of signature $(0,+,[2,2],\{ (-),(-)\} )$. 
Then $\iota $ is \topconj to either $\iota _{4;2,1}$ or $\iota _{4;2,2}$ by Proposition~\ref{prop_topconj}. 
As shown in Table~\ref{table:NSKmap}, the NSK-maps corresponding to $\iota _{4;2,1}$ and $\iota _{4;2,2}$ are given by the following:
\begin{align*}
\theta _{4;2,1}&\colon (x_1,x_2,e_1,e_2,c_{1,0},c_{2,0})\to (X,X,1,1,X,X),\\
\theta _{4;2,2}&\colon (x_1,x_2,e_1,e_2,c_{1,0},c_{2,0})\to (X,X,X,X,X,X).
\end{align*}
By Lemma~\ref{theta_e_i}, the involution $\iota _{4;2,1}$ has no one-sided \refline s\ and two two-sided \refline s, and $\iota _{4;2,2}$ has two one-sided \refline s\ and no two-sided \refline s.  
Denote by $\tilde{\iota }_i$ $(i=1,2)$ the involution on $N_5$ obtained from $\iota _{4;2,i}$ by the blowup at an isolated fixed point of $\iota _{4;2,i}$. 
By Proposition~\ref{prop_blowup_signature}, each involution $\tilde{\iota }_i$ has the same signature of $\iota_{5;3,1}$ and $\iota_{5;3,2}$.
By Corollary~\ref{cor_blowup_signature_isol}~(1), the involution $\tilde{\iota }_1$ (resp. $\tilde{\iota }_2$) has one one-sided \refline \ and two two-sided \refline s (resp. three one-sided \refline s\ and no two-sided \refline s).  
Thus $\tilde{\iota }_1$ is \topconj to $\iota _{5;3,2}$ and $\tilde{\iota }_2$ is \topconj to $\iota _{5;3,1}$. 
We have completed Proposition~\ref{prop_blowup_even-odd}.
\end{proof}

\section{The proof of Theorem~\ref{main_thm}}\label{section_mainthm}

We give the proof of Theorem~\ref{main_thm} in this section. 

\subsection{Preliminaries for the proof of Theorem~\ref{main_thm}}\label{section_preliminaries}

In this section, we review definitions of the mapping class groups of surfaces, some homomorphisms among mapping class groups and related groups, and relations among Dehn twists and crosscap slides in mapping class groups due to the proof of Theorem~\ref{main_thm}.

The \textit{mapping class group} $\M (N_{g})$ of $N_{g}$ is the group of isotopy classes of self-homeomorphisms on $N_{g}$.  
Let $x_0$ be a point of $N_{g}$ and $\M (N_g,x_0)$ the group of isotopy classes of self-homeomorphisms on $N_g$ fixing the point $x_0$, where isotopies also fix the point $x_0$. 
For mapping classes $f=[\varphi ]$ and $g=[\psi ]$ on $N_g$, we define $gf :=[\psi \circ \varphi ]$, i.e. $f$ is applied first. 
Then the natural correspondence $\M (N_g,x_0)\ni [\varphi ]\to [\varphi ]\in \M (N_{g})$ indeuces the epimorphism 
\[
\F \colon \M (N_g,x_0) \to \M (N_g).
\]
The homomorphism $\F $ is called the \textit{forgetful homomorphism}.

Then the {\it point pushing map} 
\[
\Delta \colon \pi _1(N_{g},x_0)\rightarrow \M (N_g,x_0)
\]
is an antihomomorphism that is defined as follows. For $\gamma \in \pi _1(N_g,x_0)$, the image $\Delta (\gamma )\in \M (N_g,x_0)$ is described as the result of pushing the point $x_0$ once along $\gamma $. 
Note that for $\gamma _1$, $\gamma _2\in \pi _1(N_{g},x_0)$, the product $\gamma _1\gamma _2$ means $\gamma _1\gamma _2(t)=\gamma _1(2t)$ for $0\leq t\leq \frac{1}{2}$ and $\gamma _1\gamma _2(t)=\gamma _2(2t-1)$ for $\frac{1}{2}\leq t\leq 1$. 
The forgetful homomorphism $\F $ and the point pushing map $\Delta $ give the following short exact sequence:
\begin{eqnarray}\label{exact1}
\pi _1(N_{g},x_0) \stackrel{\Delta }{\longrightarrow }\M (N_{g},x_0)\stackrel{\mathcal{F}}{\longrightarrow }\M (N_{g})\longrightarrow 1.
\end{eqnarray}
The exact sequence~(\ref{exact1}) is called the \textit{Birman exact sequence}. 
By Murasugi~\cite{Murasugi}, the center of $\pi _1(N_{g},x_0)$ is trivial for $g\geq 3$. 
Thus, for $g\geq 3$, the homomorphism $\Delta $ is injective by Birman~\cite{Birman}*{Corollary~1.2}.

Let $e:\D \hookrightarrow N_g$ be an embedding of the unit disk $\D \subset \C $ such that $e(0)=x_0$. 
Put $D=e(\D )$. 
Then the {\it blowup homomorphism} 
\[
\Phi \colon \M (N_g,x_0)\rightarrow \M (N_{g+1})
\]
is defined as follows. 
For $h \in \mathcal{M}(N_g,x_0)$, we take a representative homeomorphism $\omega \in h$ of the mapping class $h$ which satisfies either of the following conditions: (a) $\omega |_{D}$ is the identity map on $D$, (b) $\omega (x)=e(\overline{e^{-1}(x)})$ for $x\in D$, where $\overline{e^{-1}(x)}$ is the complex conjugate of $e^{-1}(x)\in \C $. Such $\omega $ is compatible with the blowup of $N_g$ on $D$, thus $\Phi (h)\in \M (N_{g})$ is induced and well-defined (c.f. \cite{Szepietowski1}*{Section~2.3}). 
Following Szepietowski~\cite{Szepietowski1}, we define the composition: 
\[
\Psi =\Phi \circ \Delta \colon \pi _1(N_g, x_0)\rightarrow \M (N_{g+1}).
\]
Under the condition above, the next two lemmas follow from the description of the point pushing map (see \cite{Korkmaz2}*{Lemma~2.2 and Lemma~2.3}). 

\begin{lem}\label{pushing1}
Let $\gamma \in \pi _1(N_g,x_0)$ be a homotopy class of a one-sided simple loop on $N_g$ and $\mu $ a one-sided simple closed curve on $N_{g+1}$ which is obtained from $\partial D$ by the blowup of $N_g$ on $D$. 
Denote by $\alpha $ the two-sided simple closed curve on $N_{g+1}$ which is obtained from $\gamma $ by the blowup of $N_g$ on $D$. Then we have
\[
\Psi (\gamma )=Y_{\mu ,\alpha }.
\]
\end{lem}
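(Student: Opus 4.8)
The plan is to compute the point pushing map $\Delta(\gamma)$ explicitly as a product of Dehn twists along the two boundary curves of a regular neighborhood of $\gamma$, and then track what happens to these curves under the blowup homomorphism $\Phi$. Since $\gamma$ is a one-sided simple loop, a regular neighborhood of (a simple closed curve representing) $\gamma$ in $N_g$ is a M\"obius band $B$, so the complement of $x_0$ in $B$ retracts onto the core, and the boundary of a regular neighborhood of the core is a single two-sided simple closed curve. The standard description of the point pushing map (as in \cite{Korkmaz2}) expresses $\Delta(\gamma)$ in terms of the Dehn twists along the boundary components of an annular neighborhood of $\gamma$ traced by the pushed point; in the one-sided case this collapses to a single relevant curve together with the crosscap structure, and the resulting mapping class on the once-punctured $N_g$ is what we must push forward.

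First I would fix a representative homeomorphism $\omega$ of $\Delta(\gamma)$ supported in a neighborhood $\mathcal N$ of $\gamma \cup \{x_0\}$, chosen so that $\omega$ is compatible with the blowup (condition (a) or (b) in the definition of $\Phi$). Next I would identify $\mathcal N$ after blowup: performing the blowup of $N_g$ on $D = e(\mathbb D)$ replaces the disk around $x_0$ by a crosscap with core $\mu$, and $\mathcal N$ becomes a regular neighborhood in $N_{g+1}$ of $\mu \cup \alpha$, where $\alpha$ is exactly the two-sided curve obtained from $\gamma$ by the blowup — i.e. $\mathcal N$ becomes the genus-2 surface with one boundary component pictured in Figure~\ref{crosscap_slide}, the defining neighborhood for the crosscap slide $Y_{\mu,\alpha}$. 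Then I would check that $\Phi(\Delta(\gamma))$ acts on this neighborhood precisely as the operation of pushing the crosscap $\mu$ once around $\alpha$: the point-pushing motion of $x_0$ along $\gamma$, when $x_0$ is blown up into the crosscap, becomes the motion dragging the crosscap along $\alpha$, which is the definition of $Y_{\mu,\alpha}$. Finally I would confirm that both $\Psi(\gamma)$ and $Y_{\mu,\alpha}$ are supported in (and agree on) $\mathcal N$ and are the identity outside it, hence are equal in $\M(N_{g+1})$.

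The main obstacle I expect is the careful bookkeeping in the middle step: matching the orientation and ``direction'' conventions so that the pushed-point motion really corresponds to the crosscap-slide motion rather than its inverse or a composition with an extra Dehn twist along $\mu$ or along $\partial\mathcal N$. This requires being precise about which representative $\omega$ one picks (the condition (b) case, where $\omega$ is conjugation on $D$, is the natural one for a one-sided loop since $\gamma$ passes through $x_0$ in an orientation-reversing way), and about how the blowup identification $e(z)\sim e(-z)$ interacts with the isotopy of the pushing. Since this is exactly the content worked out in \cite{Korkmaz2}*{Lemma~2.2}, I would lean on that computation and only supply the verification that the blown-up neighborhood is the crosscap-slide model, keeping the argument at the level of Figure~\ref{crosscap_slide}.
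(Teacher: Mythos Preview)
Your proposal is correct and follows the same approach as the paper: both rely on the description of the point pushing map from \cite{Korkmaz2}*{Lemma~2.2}. In fact the paper gives no proof at all for this lemma --- it simply states that the result ``follow[s] from the description of the point pushing map'' and cites Korkmaz, whereas you have sketched the geometric content of that verification (identifying the blown-up neighborhood with the crosscap-slide model and matching the point-pushing motion with the crosscap-pushing motion).
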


\begin{lem}\label{pushing2}
Let $\gamma \in \pi _1(N_g,x_0)$ be a homotopy class of a two-sided simple loop on $N_g$ and $\bar{\delta}_1$ (resp. $\bar{\delta}_2$) a parallel curve of $\gamma $ on the right-hand (resp. left-hand) side of $\gamma $ with respect to an orientation of the regular neighborhood of $\gamma $ in $N_g$. 
Denote by $\delta _i$ $(i=1,2)$ the simple closed curve on $N_{g+1}$ which is obtained from $\bar{\delta}_i$ by the blowup of $N_g$ on $D$ (see Figure~\ref{crosscap_pushing_twist2}). 
We give the positive direction of $t_{\delta _i}$ $(i=1,2)$ which is induced from the orientation of the regular neighborhood of $\gamma $ in $N_g$. 
Then we have 
\[
\Psi (\gamma )=t_{\delta _1}t_{\delta _2}^{-1}.
\] 
\end{lem}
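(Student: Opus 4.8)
The plan is to derive Lemma~\ref{pushing2} by combining Korkmaz's explicit description of the point pushing map with the definition of the blowup homomorphism $\Phi$, the point being that whether the loop being pushed is one- or two-sided decides which of the two defining conditions of $\Phi$ is in force. First I would recall (cf.\ \cite{Korkmaz2}*{Lemma~2.3} and the standard picture of dragging a disk around a loop) that, since $\gamma$ is a \emph{two-sided} simple loop based at $x_0$, a regular neighborhood $A$ of $\gamma$ in $N_g$ is an annulus with $\partial A=\bar\delta_1\sqcup\bar\delta_2$, that we may choose $A$ so that $D=e(\D)\subset A$, and that the point pushing map $\Delta(\gamma)\in\M(N_g,x_0)$ admits a representative homeomorphism $\omega_\gamma$ with the following properties: $\omega_\gamma$ is the identity outside $A$; it restricts to the identity on $D$; and $[\omega_\gamma]=t_{\bar\delta_1}t_{\bar\delta_2}^{-1}$ in $\M(N_g,x_0)$, where the positive direction of $t_{\bar\delta_i}$ is the one induced from the chosen orientation of the annulus $A$. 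Concretely, $\omega_\gamma$ is the product of two Dehn twists supported in disjoint annular neighborhoods of $\bar\delta_1$ and $\bar\delta_2$, neither of which meets $\gamma$, hence neither meets a small disk $D$ around $x_0$; this is exactly the feature that fails when $\gamma$ is one-sided, where the dragged disk returns orientation-reversingly and one is forced into case~(b) of the definition of $\Phi$. That dichotomy is precisely why Lemma~\ref{pushing1} produces a crosscap slide while the present statement produces Dehn twists.

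Next I would feed $\omega_\gamma$ through the blowup homomorphism. Since $\omega_\gamma|_D$ is the identity, $\omega_\gamma$ satisfies condition~(a) in the definition of $\Phi$, so $\Psi(\gamma)=\Phi(\Delta(\gamma))=\Phi([\omega_\gamma])$ is the mapping class on $N_{g+1}$ induced by $\omega_\gamma$ under the blowup of $N_g$ on $D$. Because $\bar\delta_1$ and $\bar\delta_2$ are disjoint from $D$, the blowup carries their oriented annular neighborhoods homeomorphically onto oriented annular neighborhoods of $\delta_1$ and $\delta_2$ in $N_{g+1}$, the orientations being inherited from that of $A$; under this identification the self-homeomorphism induced by $t_{\bar\delta_i}$ is the right-handed Dehn twist $t_{\delta_i}$ for the induced positive direction, and the homeomorphism induced by $\omega_\gamma$ is $t_{\delta_1}t_{\delta_2}^{-1}$. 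Therefore $\Psi(\gamma)=t_{\delta_1}t_{\delta_2}^{-1}$, as claimed, and Figure~\ref{crosscap_pushing_twist2} records the curves and directions involved.

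I expect the only genuine work to be bookkeeping. The first point is the claim that $\Delta(\gamma)$ has a representative equal to the identity on $D$: I would justify it from the fact that a neighborhood of a two-sided curve is orientable, so the disk returns from its trip around $\gamma$ with its original framing, or simply cite \cite{Korkmaz2}*{Lemma~2.3}. The second point is the sign convention, i.e.\ that it is the right-hand pushoff $\bar\delta_1$ that appears with the positive exponent and the left-hand pushoff $\bar\delta_2$ with the negative one; this is pinned down by comparing the orientation of $A$ with the chosen positive direction of $t_\gamma$ and is most transparently read off the figure. Tracking orientations (hence Dehn twist directions) through the blowup is the fiddliest step, but it is routine once the picture is fixed; there is no conceptual obstacle beyond what already appears in the proof of Lemma~\ref{pushing1}.
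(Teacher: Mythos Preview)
Your argument is correct and follows the same route as the paper: both derive the lemma from Korkmaz's description of the point pushing map \cite{Korkmaz2}*{Lemma~2.3}, noting that for a two-sided loop the representative of $\Delta(\gamma)$ is the identity on $D$ so that condition~(a) in the definition of $\Phi$ applies. The paper simply cites this reference without spelling out the details you have supplied.
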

\begin{figure}[ht]
\includegraphics[scale=1.0]{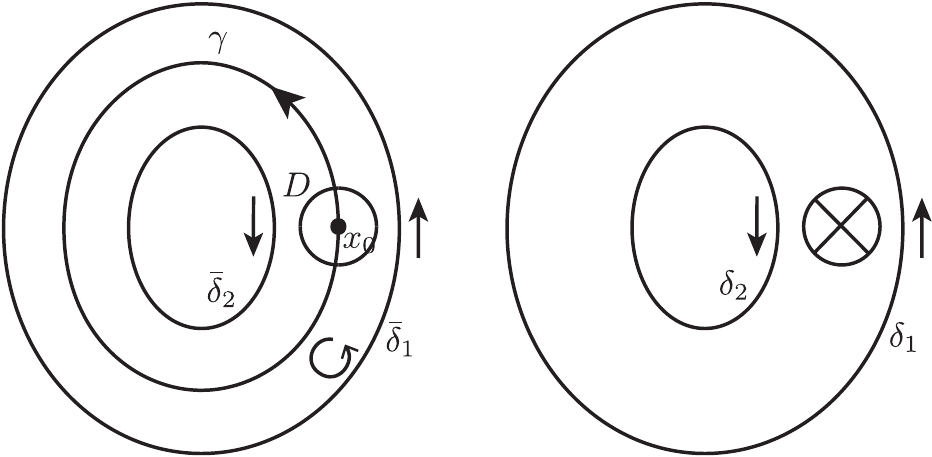}
\caption{Simple closed curves $\gamma $, $\bar{\delta }_1$, and $\bar{\delta }_2$ on $N_g$ on the left-hand side, and simple closed curves $\delta _1$ and $\delta _2$ on $N_{g+1}$ on the right-hand side.}\label{crosscap_pushing_twist2}
\end{figure}

The following lemma give well-known relations in mapping class groups.   
\begin{lem}\label{braid_original}
Let $t_\gamma $ be the Dehn twist along a two-sided simple closed curve $\gamma $ on $N_g$ and $Y_{\mu , \alpha }$ the crosscap slide about a one-sided simple closed curve $\mu $ and an oriented two-sided simple closed curve $\alpha $ on $N_g$. 
For any self-homeomorphism $\varphi $ on $N_g$, we have the following relations.
 \[
\left\{ \begin{array}{lll}
\mathrm{(i)} &\varphi t_{c}\varphi ^{-1} = t_{\varphi (c)}^\varepsilon ,\\
\mathrm{(ii)} &\varphi Y_{\mu ,\alpha }\varphi ^{-1} = Y_{\varphi (\mu ),\varphi (\alpha )},
 \end{array} \right.
 \]
where $\varepsilon =1$ if $\varphi $ preserves the orientations of regular neighborhoods of $\gamma $ and $\varphi (\gamma )$, and $\varepsilon =-1$ for the other case.
\end{lem}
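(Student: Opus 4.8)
The plan is to derive both relations from the single observation that $t_\gamma$ and $Y_{\mu,\alpha}$ are represented by homeomorphisms with prescribed support — an annular regular neighborhood of $\gamma$ in the first case, a regular neighborhood of $\mu\cup\alpha$ (a non-orientable surface of genus $2$ with one boundary component) in the second — obtained from a fixed local model, so that conjugating by $\varphi$ merely transports the support together with the model. First I would recall the precise form needed: a representative $\tau$ of $t_\gamma$ is the identity outside a chosen annular neighborhood $A$ of $\gamma$ and, on $A$, is the standard right-handed twist with respect to a fixed orientation of $A$; a representative of $Y_{\mu,\alpha}$ is the identity outside a chosen neighborhood $\mathcal{N}$ of $\mu\cup\alpha$ and, on $\mathcal{N}$, is the standard homeomorphism pushing the crosscap about $\mu$ once along the oriented curve $\alpha$ (as in Figure~\ref{crosscap_slide}). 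I would also use the standard facts that $t_\gamma$ depends only on the isotopy class of $\gamma$ and the chosen orientation of its regular neighborhood, while $Y_{\mu,\alpha}$ depends only on the isotopy classes of $\mu$ and of the oriented curve $\alpha$ (see \cite{Lickorish1}, \cite{Szepietowski1}).

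For (i), the representative $\varphi\tau\varphi^{-1}$ of $\varphi t_\gamma\varphi^{-1}$ is the identity outside $\varphi(A)$, which is an annular neighborhood of $\varphi(\gamma)$, and on $\varphi(A)$ it equals $(\varphi|_A)\circ\tau\circ(\varphi|_A)^{-1}$. Identifying both $A$ and $\varphi(A)$ with the standard oriented annulus via the chosen orientations of the regular neighborhoods of $\gamma$ and of $\varphi(\gamma)$, the map $\varphi|_A$ becomes a self-homeomorphism $\bar\varphi$ of the standard annulus which is orientation-preserving precisely when $\varphi$ carries the chosen orientation of the neighborhood of $\gamma$ to that of $\varphi(\gamma)$, and $\varphi\tau\varphi^{-1}|_{\varphi(A)}$ corresponds to $\bar\varphi\,\tau_0\,\bar\varphi^{-1}$ with $\tau_0$ the standard right-handed twist. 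Since the mapping class group of the annulus rel boundary is infinite cyclic generated by $\tau_0$, and conjugation by an orientation-reversing homeomorphism sends $\tau_0$ to $\tau_0^{-1}$, the restriction $\varphi\tau\varphi^{-1}|_{\varphi(A)}$ is isotopic rel boundary to $\tau_0$ in the orientation-preserving case and to $\tau_0^{-1}$ otherwise. Hence $\varphi t_\gamma\varphi^{-1}=t_{\varphi(\gamma)}^{\varepsilon}$ with $\varepsilon$ exactly as in the statement.

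For (ii), likewise $\varphi Y_{\mu,\alpha}\varphi^{-1}$ is represented by a homeomorphism equal to the identity outside $\varphi(\mathcal{N})$, and $\varphi(\mathcal{N})$ is a regular neighborhood of $\varphi(\mu)\cup\varphi(\alpha)$ of the same topological type, with $\varphi(\mu)$ one-sided, $\varphi(\alpha)$ two-sided and oriented by transporting the orientation of $\alpha$, meeting transversely in one point. Since the local model for $Y_{\mu,\alpha}$ is built solely from the data $(\mathcal{N},\mu,\alpha)$ with $\alpha$ oriented — no ambient orientation enters, as $\mathcal{N}$ is non-orientable — conjugating the model by $\varphi|_{\mathcal{N}}$ yields precisely the model attached to $(\varphi(\mathcal{N}),\varphi(\mu),\varphi(\alpha))$, that is $Y_{\varphi(\mu),\varphi(\alpha)}$; no sign appears here because the direction of the push is determined by the orientation of $\alpha$, which $\varphi$ carries along, and there is no ambient orientation for $\varphi$ to reverse. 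The only point needing real care will be the sign bookkeeping in (i): fixing once and for all the convention for ``right-handed with respect to the chosen orientation'' and checking, by a direct computation in $S^1\times[0,1]$, that an orientation-reversing self-homeomorphism of the annulus conjugates the right-handed twist to the left-handed one, then matching this with the stated condition on whether $\varphi$ preserves the two chosen orientations. As the lemma is classical, one could alternatively just cite \cite{Lickorish1} and \cite{Szepietowski1}.
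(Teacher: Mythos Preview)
Your argument is correct and is exactly the standard one for these conjugation relations. The paper itself does not give a proof of this lemma: it is introduced with the sentence ``The following lemma give well-known relations in mapping class groups'' and left at that, so there is nothing to compare beyond noting that your write-up supplies the details the paper omits. Your remark that one could simply cite \cite{Lickorish1} or \cite{Szepietowski1} is in the same spirit as the paper's treatment.
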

The next lemma follows from Lemma~\ref{braid_original} (see for instance~\cite{Farb-Margalit}).
\begin{lem}\label{braid_rel}
Let $t_{\gamma _i}$ $(i=1,2)$ be the Dehn twist along a two-sided simple closed curve $\gamma _i$ on $N_g$, and $Y_{\mu , \alpha }$ and $Y_{\mu ^\prime , \alpha ^\prime }$ the crosscap slides for one-sided simple closed curves $\mu $ and $\mu ^\prime $, and oriented two-sided simple closed curves $\alpha $ and $\alpha ^\prime $ on $N_g$. 
When simple closed curves $\gamma _1$ and $\gamma _2$ and unions $\mu \cup \alpha $ and $\mu ^\prime \cup \alpha ^\prime $ are mutually disjoint, we have the following relations.
\begin{enumerate}[label=\textup{(\arabic*)},start=0]
\item $t_{\alpha }Y_{\mu , \alpha }=Y_{\mu , \alpha }t_{\alpha }^{-1}$,
\item $t_{\gamma _1}t_{\gamma _2}=t_{\gamma _2}t_{\gamma _1}$, 
\item $Y_{\mu , \alpha }Y_{\mu ^\prime , \alpha ^\prime }=Y_{\mu ^\prime , \alpha ^\prime }Y_{\mu , \alpha }$, 
\item $t_{\gamma _1}Y_{\mu , \alpha }=Y_{\mu , \alpha }t_{\gamma _1}$. 
\end{enumerate}
When $\gamma _1$ and $\gamma _2$ intersect transversely at one point, we have the relation
\begin{enumerate}[label=\textup{(\arabic*)},start=4]
\item $t_{\gamma _1}t_{\gamma _2}t_{\gamma _1}=t_{\gamma _2}t_{\gamma _1}t_{\gamma _2}$.
\end{enumerate}
\end{lem}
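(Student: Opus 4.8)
The plan is to derive all five relations from the two conjugation formulas of Lemma~\ref{braid_original}, using only the elementary fact that self-homeomorphisms supported in disjoint subsurfaces commute together with the standard descriptions of how $t_{\gamma_1}t_{\gamma_2}$ and $Y_{\mu,\alpha}$ act on the relevant curves. I would treat the four ``disjoint support'' relations (0)--(3) first and the braid relation (4) last.

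For relations (1), (2), and (3) I would choose representatives of $t_{\gamma_1}$, $t_{\gamma_2}$, $Y_{\mu,\alpha}$, and $Y_{\mu',\alpha'}$ supported in arbitrarily small regular neighborhoods of $\gamma_1$, $\gamma_2$, $\mu\cup\alpha$, and $\mu'\cup\alpha'$ respectively; the disjointness hypothesis lets these neighborhoods be taken pairwise disjoint, so any two of the corresponding maps commute. Phrased through Lemma~\ref{braid_original}, one has $t_{\gamma_1}t_{\gamma_2}t_{\gamma_1}^{-1}=t_{t_{\gamma_1}(\gamma_2)}^{\varepsilon}$, and since $t_{\gamma_1}$ can be isotoped to the identity near $\gamma_2$ we get $t_{\gamma_1}(\gamma_2)=\gamma_2$ with $\varepsilon=1$, which is (1); conjugating $Y_{\mu',\alpha'}$ by $Y_{\mu,\alpha}$ via Lemma~\ref{braid_original}(ii) fixes $\mu'$ and $\alpha'$ with their orientations, giving (2), and conjugating $Y_{\mu,\alpha}$ by $t_{\gamma_1}$ the same way fixes $\mu$ and $\alpha$, giving (3).

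Relation (0) is the one place the geometry of the crosscap slide really enters. I would apply Lemma~\ref{braid_original}(i) with $\varphi=Y_{\mu,\alpha}$: the crosscap slide carries $\alpha$ to an isotopic curve but reverses the orientation of a regular neighborhood of $\alpha$ --- this is the characteristic feature of a crosscap slide, the crosscap dragged once around $\alpha$ flipping the local orientation, which is precisely the situation that forces $\varepsilon=-1$ in Lemma~\ref{braid_original}(i). Hence $Y_{\mu,\alpha}\,t_\alpha\,Y_{\mu,\alpha}^{-1}=t_\alpha^{-1}$; taking the inverse of both sides and rearranging gives $t_\alpha Y_{\mu,\alpha}=Y_{\mu,\alpha}t_\alpha^{-1}$, which is (0).

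Finally, for (4), since $\gamma_1$ and $\gamma_2$ are two-sided and meet transversely in one point, a regular neighborhood of $\gamma_1\cup\gamma_2$ is an orientable one-holed torus, so the proof reduces to the orientable case: $t_{\gamma_1}t_{\gamma_2}$ carries $\gamma_1$ to a curve isotopic to $\gamma_2$, preserving the orientations of their neighborhoods, so Lemma~\ref{braid_original}(i) gives $(t_{\gamma_1}t_{\gamma_2})\,t_{\gamma_1}\,(t_{\gamma_1}t_{\gamma_2})^{-1}=t_{\gamma_2}$, which rearranges to $t_{\gamma_1}t_{\gamma_2}t_{\gamma_1}=t_{\gamma_2}t_{\gamma_1}t_{\gamma_2}$. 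The main obstacle is not any computation but verifying the two geometric inputs --- that $Y_{\mu,\alpha}$ preserves $\alpha$ while reversing its neighborhood's orientation, and that $t_{\gamma_1}t_{\gamma_2}(\gamma_1)$ is isotopic to $\gamma_2$ orientation-compatibly --- both of which are standard and can be quoted (see for instance \cite{Farb-Margalit} and \cite{Szepietowski1}).
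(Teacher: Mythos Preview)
Your proposal is correct and follows exactly the route the paper indicates: the paper simply states that Lemma~\ref{braid_rel} ``follows from Lemma~\ref{braid_original} (see for instance~\cite{Farb-Margalit})'' without spelling out details, and what you have written is precisely the standard unpacking of that remark --- disjoint supports for (1)--(3), the orientation-reversal of the annular neighborhood of $\alpha$ under $Y_{\mu,\alpha}$ for (0), and $t_{\gamma_1}t_{\gamma_2}(\gamma_1)\simeq\gamma_2$ for (4).
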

The relations~(1), (2), and (3) in Lemma~\ref{braid_rel} are called the \textit{disjointness relations} and the relation~(4) in Lemma~\ref{braid_rel} is called the \textit{braid relation}. 

We prepare explicit simple closed curves on $N_g$ and denote some mapping classes on $N_g$ to construct \DC s. 
Recall that $\alpha _i$ $(i=1,\dots ,g-1)$, $\beta $, and $\mu $ are simple closed curves on $N_g$ as in Figure~\ref{scc_closed_nonorisurf}. 
We take a point $x_0$ on the right-hand side of the g-th crosscap of $N_g$ as in Figure~\ref{scc_gamma_i1i2il}~and~\ref{generators_fundamental_grp}. 
For positive integers $1\leq i_1<i_2<\cdots <i_l\leq g$, we denote by $\gamma _{\{ i_1, i_2, \dots , i_l\}}=\gamma _{i_1, i_2, \dots , i_l}$ the simple closed curve on $N_g$ which passes through the $i_1$, $i_2$, $\dots $, $i_l$-th crosscaps as in Figure~\ref{scc_gamma_i1i2il}. 
We remark that $\alpha _i=\gamma _{i,i+1}$ $(i=1,\dots ,g-1)$, $\beta =\gamma _{1,2,3,4}$, and $\mu _1=\gamma _{1}$.
Then we denote by $Y_{i,j}$ the crosscap slide $Y_{\gamma _{i}, \gamma _{i,j}}$ for $1\leq i,j\leq g$. 
Recall that we express $i=t_{\alpha _i}$ $(i=1,\dots ,g-1)$, $b=t_{\beta }$, and $y=Y_{\mu _1,\alpha _1}=Y_{1,2}$.  
By a basic calculation, we have the following lemma. 
\begin{lem}\label{rel_y_ij}
The following relations hold in $\mathcal{M}(N_g,x_0)$ (also hold in $\mathcal{M}(N_g)$): 
\begin{enumerate}[label=\textup{(\arabic*)}]
\item $Y_{i,i+1}=\bigl((i-1)i\cdots 2312\bigr)y^{(-1)^{i+1}}\bigl(\bar{2}\bar{1}\bar{3}\bar{2}\cdots \bar{i}(\overline{i-1})\bigr)$ \quad for $1\leq i\leq g-1$,
\item $Y_{i+1,i}=\bigl((i-1)i\cdots 23121\bigr)y^{(-1)^{i+1}}\bigl(\bar{1}\bar{2}\bar{1}\bar{3}\bar{2}\cdots \bar{i}(\overline{i-1})\bigr)$ \quad for $1\leq i\leq g-1$.
\end{enumerate}
The following relation holds in $\mathcal{M}(N_g)$: 
\begin{enumerate}[label=\textup{(\arabic*)},start=3]
\item $Y_{g,1}=\bigl((\overline{g-1})\cdots \bar{2}\bar{1}\bigr)\bar{y}\bigl(12\cdots (g-1)\bigr)$,
\item $Y_{g,g-1}=\bigl((\overline{g-2})(\overline{g-1})\cdots \bar{2}\bar{3}\bar{1}\bar{2}\bigr)y\bigl(2132\cdots (g-1)(g-2)\bigr)$.
\end{enumerate}
\end{lem}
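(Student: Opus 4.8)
The plan is to derive all four relations by conjugating the generator $y=Y_{\mu_1,\alpha_1}$ and invoking Lemma~\ref{braid_original}(ii), which turns a conjugate of a crosscap slide into the crosscap slide along the image curves. In each displayed identity the word to the right of $y^{\pm1}$ is exactly the inverse of the word $\varphi$ to its left (one checks this directly from the letters, using relation~(1) of Lemma~\ref{braid_rel} when the left word is written in its ``pairs'' form), so the identity is equivalent to the geometric statement that $\varphi$ carries the pair of curves $(\mu_1,\alpha_1)=(\gamma_1,\gamma_{1,2})$ to $(\gamma_i,\gamma_{i,i+1})$ in (1), to $(\gamma_{i+1},\gamma_{i,i+1})$ in (2), to $(\gamma_g,\gamma_{1,g})$ in (3), and to $(\gamma_g,\gamma_{g-1,g})$ in (4), with the orientation that $\varphi$ induces on the image of $\alpha_1$ agreeing with the reference orientation of the target curve precisely when the exponent of $y$ is $+1$.

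So the real work is a chain of curve computations in the model of Figures~\ref{scc_closed_nonorisurf} and~\ref{scc_gamma_i1i2il}. First I would record the action of a single generator $a_k=t_{\alpha_k}$ on the standard curves $\gamma_{i_1,\dots,i_l}$: it fixes (up to isotopy) any such curve whose index set is disjoint from $\{k,k+1\}$ or equal to $\{k,k+1\}$, and it sends a curve meeting exactly one of the crosscaps $k,k+1$ to the one meeting the other, as one reads off by resolving the unique band intersection with $\alpha_k$ and noting that a doubled passage through the shared crosscap is absorbed (passing twice through a crosscap is isotopic to not passing through it), so the result is again a standard curve. Chaining these moves, in case (1) the word $(i-1)i\cdots 2312$ equals $(a_{i-1}\cdots a_1)(a_i\cdots a_2)$ by relation~(1) of Lemma~\ref{braid_rel}, and the latter first slides the second crosscap out to slot $i+1$ and then the first crosscap out to slot $i$, so $\varphi(\gamma_1)=\gamma_i$ and $\varphi(\gamma_{1,2})=\gamma_{i,i+1}$; case (2) follows from (1) by appending one more $a_1$ (applied first), which carries the one-sided curve from $\gamma_1$ (which $\varphi$ sends to $\gamma_i$) over to $\gamma_2$ (which $\varphi$ sends to $\gamma_{i+1}$) while leaving $\gamma_{1,2}$ fixed. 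In cases (3) and (4) the target curves $\gamma_{1,g}$ and $\gamma_{g-1,g}$ force the conjugator to drag the $g$-th crosscap around the surface, which is exactly what the factor $(a_1\cdots a_{g-1})$ together with $\bar y$ (respectively the word in (4)) accomplishes; since this passes the base point $x_0$ lying beside the $g$-th crosscap, (3) and (4) are recorded in $\M(N_g)$ rather than in $\M(N_g,x_0)$, while the conjugators in (1) and (2) admit base-point-fixing representatives and those relations already hold in $\M(N_g,x_0)$.

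The main obstacle is the orientation bookkeeping that produces the exponents. Since $Y_{\mu,\alpha}^{-1}=Y_{\mu,\alpha^{-1}}$ and each Dehn twist $t_{\alpha_k}$ reverses the orientation of a collar of the curve it transports, I have to track whether the accumulated moves send the chosen orientation of $\alpha_1$ to $+\gamma_{i,i+1}$ or to $-\gamma_{i,i+1}$, measured against the reference orientations fixed once and for all in Figure~\ref{scc_gamma_i1i2il}; a careful parity count of this (which is not merely the number of transport steps) yields the exponent $(-1)^{i+1}$ in (1) and (2) and the bar on $y$ in (3), with an analogous longer count for (4). Everything else --- that the displayed right-hand words really are inverses of the left-hand ones, that (2) follows from (1) by one extra $a_1$, and the corresponding reductions in (3) and (4) --- is routine.
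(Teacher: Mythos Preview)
The paper records this lemma as following from ``a basic calculation'' and gives no further argument, so your conjugation approach via Lemma~\ref{braid_original}(ii) is exactly the intended verification and there is nothing substantively different to compare.

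That said, your orientation bookkeeping contains a genuine error that you should repair before the argument is complete. You write that ``each Dehn twist $t_{\alpha_k}$ reverses the orientation of a collar of the curve it transports''. This is false: a Dehn twist is supported in an oriented annular neighbourhood of its core and is orientation-preserving there, hence it carries the orientation of the regular neighbourhood of any curve to the induced orientation on the image. If the claim were true, the conjugator in~(1), which is a product of $2(i-1)$ Dehn twists, would always produce the sign $(-1)^{2(i-1)}=+1$, contradicting the stated exponent $(-1)^{i+1}$. You yourself notice the inconsistency in the next clause (``which is not merely the number of transport steps''), so the sentence as written is self-contradictory. The alternating sign has a different, concrete source: when you simplify a curve such as $t_{\alpha_k}(\gamma_{1,k})$ from ``through crosscaps $1,k,k,k+1$'' to the standard model $\gamma_{1,k+1}$, the isotopy that removes the doubled passage through crosscap $k$ drags an arc once across that crosscap and thereby flips the chosen normal direction of the collar relative to the reference normal fixed in Figure~\ref{scc_gamma_i1i2il}. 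Tracking how many such crosscap-absorptions occur for the oriented curve $\alpha_1$ under the conjugator $(a_{i-1}\cdots a_1)(a_i\cdots a_2)$ is what actually yields $(-1)^{i-1}=(-1)^{i+1}$; you should make this explicit rather than appealing to a false per-twist reversal.

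A smaller point: in your discussion of (3) and (4) you speak of ``the factor $(a_1\cdots a_{g-1})$ together with $\bar y$'' as accomplishing the drag of the $g$-th crosscap. This conflates the conjugator with the element being conjugated; in~(3) the conjugator is $(\overline{g-1})\cdots\bar 2\bar 1$ alone, and $\bar y$ is the central letter whose exponent you are trying to justify. The reason (3) and~(4) are stated only in $\mathcal{M}(N_g)$ is simply that the images $\varphi(\alpha_1)$ agree with the standard models $\gamma_{1,g}$ and $\gamma_{g-1,g}$ only up to an isotopy that crosses $x_0$, not that the conjugator itself moves $x_0$.
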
 
\begin{figure}[ht]
\includegraphics[scale=1.2]{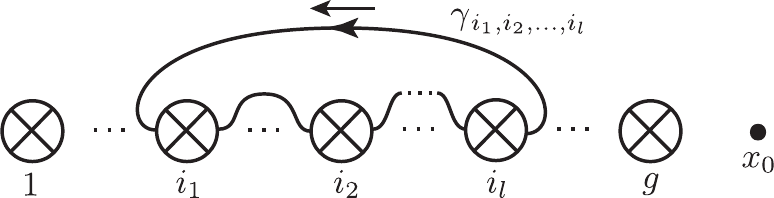}
\caption{A simple closed curve $\gamma _{\{ i_1, i_2, \dots , i_l\}}$ on $N_g$.}\label{scc_gamma_i1i2il}
\end{figure}
\begin{figure}[ht]
\includegraphics[scale=1.0]{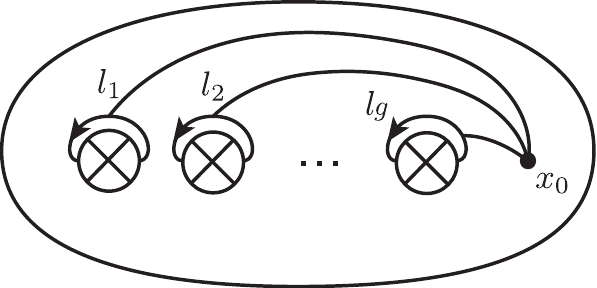}
\caption{Loops $l_{1},\ l_2, \dots ,\ l_{g}$ on $N_g$ based at $x_0$.}\label{generators_fundamental_grp}
\end{figure}

Let $l_i$ $(i=1,\ \dots ,\ g)$ be a loop on $N_g$ based at $x_0$ which passes through the $i$-th crosscap as in Figure~\ref{generators_fundamental_grp}. 
Loops $l_{1},\ l_2, \dots ,\ l_{g}$ generate $\pi _1(N_g,x_0)$. 
By Lemma~\ref{pushing1}, we have the following lemma:
\begin{lem}\label{rel_y_5i}
The relation $\Psi (l_i)=Y_{g+1,i}$ $(1\leq i\leq g)$ holds in $\mathcal{M}(N_{g+1})$. 
\end{lem}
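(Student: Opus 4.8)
The plan is to apply Lemma~\ref{pushing1} directly to each loop $l_i$. First I would note that, from its description in Figure~\ref{generators_fundamental_grp}, each $l_i$ is the homotopy class of a one-sided simple loop on $N_g$ based at $x_0$: it is embedded in its interior, misses $x_0$ except at its endpoints, and passes through the $i$-th crosscap exactly once, so traversing it reverses a local orientation. Hence Lemma~\ref{pushing1} applies with $\gamma = l_i$ and yields $\Psi(l_i) = Y_{\mu, \alpha}$, where $\mu$ is the one-sided simple closed curve obtained from $\partial D$ by the blowup of $N_g$ on $D = e(\D)$ (with $e(0) = x_0$, the embedding fixed before Lemma~\ref{pushing1}), and $\alpha$ is the two-sided simple closed curve obtained from $l_i$ by the same blowup, oriented by the orientation induced from $l_i$.

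It remains to identify $\mu$ and $\alpha$ with the curves appearing in the definition of $Y_{g+1,i} = Y_{\gamma_{g+1},\,\gamma_{g+1,i}}$. Since $x_0$ lies on the right-hand side of the $g$-th crosscap of $N_g$ (Figures~\ref{scc_gamma_i1i2il} and~\ref{generators_fundamental_grp}) and we identify $(N_g)' = N_{g+1}$, the crosscap created by the blowup at $x_0$ is exactly the $(g+1)$-st crosscap of $N_{g+1}$, so its core $\mu$ is $\gamma_{g+1}$ in the notation of Figure~\ref{scc_gamma_i1i2il}. For $\alpha$: the loop $l_i$ runs from $x_0$ through the $i$-th crosscap and back to $x_0$; after the blowup replaces the neighborhood of $x_0$ by the $(g+1)$-st crosscap, it closes up to an embedded circle meeting exactly the $i$-th and $(g+1)$-st crosscaps, that is, $\alpha = \gamma_{\{i,\,g+1\}} = \gamma_{g+1,i}$, with the induced orientation matching the chosen orientation of $\gamma_{g+1,i}$. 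Substituting into Lemma~\ref{pushing1} gives $\Psi(l_i) = Y_{\gamma_{g+1},\,\gamma_{g+1,i}} = Y_{g+1,i}$.

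I expect the curve-identification step to be the only real point: one must verify, from the explicit models of $N_g$ and $N_{g+1}$ in Figures~\ref{scc_closed_nonorisurf}, \ref{scc_gamma_i1i2il}, and~\ref{generators_fundamental_grp}, that blowing up $N_g$ at $x_0$ carries $\partial D$ and $l_i$ onto the standard curves $\gamma_{g+1}$ and $\gamma_{g+1,i}$ in our fixed picture of $N_{g+1}$. This is a direct inspection of the figures using the local description of the blowup near $x_0$; once it is in place, the lemma follows by a formal substitution into Lemma~\ref{pushing1}.
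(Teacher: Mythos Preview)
Your proposal is correct and follows exactly the approach the paper intends: the paper states Lemma~\ref{rel_y_5i} immediately after the sentence ``By Lemma~\ref{pushing1}, we have the following lemma,'' giving no further proof, so the entire content is the curve identification you spell out. Your verification that $l_i$ is one-sided, that the blowup crosscap is the $(g{+}1)$-st one so $\mu=\gamma_{g+1}$, and that $l_i$ becomes $\gamma_{\{i,g+1\}}$ after blowup is precisely the ``direct inspection'' the paper leaves to the reader.
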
 
 
Finally, we review the Alexander method. 
Let $N$ be a closed non-orientable surface, possibly with finitely many marked points.  
A simple closed curve $\gamma $ on $N$ is \textit{essential} if $\gamma $ does not bounds a disk and a disk with one marked point. 
For a collection of essential simple closed curves $\gamma _1,\ \dots ,\ \gamma _n$ on $N$, then the collection $\gamma _1,\ \dots ,\ \gamma _n$ \textit{fills} $N$ if the surface obtained from $N$ by cutting along $\gamma _1\cup \cdots \cup \gamma _n$ is a disjoint union of disks, once-punctured disks, and M\"{o}bius bands.   
The following proposition is an analogy of Proposition~2.8 in~\cite{Farb-Margalit} for the case of non-orientable surfaces.  
\begin{prop}[Alexander method]\label{alexander_method}
Let $N$ be a compact non-orientable surface, possibly with finitely many marked points, $\varphi $ a self-homeomorphism on $N$, and $\gamma _1,\ \dots ,\ \gamma _n$ a collection of essential simple closed curves and essential proper simple arcs on $N$ with the following properties:
\begin{enumerate}[label=\textup{(\roman*)}]
\item For $1\leq i<j\leq n$, $\gamma _i$ is not isotopic to $\gamma _j$ and intersects with $\gamma _j$ at minimal intersection up to isotopy,  
\item For $1\leq i<j<k\leq n$, at least one of $\gamma _i\cap \gamma _j$, $\gamma _i\cap \gamma _k$, or $\gamma _j\cap \gamma _k$ is empty,
\item The collection $\gamma _1,\ \dots ,\ \gamma _n$ fills $N$.
\end{enumerate}
Then, the following hold. 
\begin{enumerate}[label=\textup{(\arabic*)}]
\item If $\varphi (\gamma _i)$ is isotopic to $\gamma _{i}$ for any $1\leq i\leq n$ relative to $\partial N$, then the union $\varphi (\cup_{i=1}^{n} \gamma _i)$ is isotopic to $\cup_{i=1}^{n} \gamma _i$ relative to $\partial N$. 
\end{enumerate}

We regard $\cup_{i=1}^{n} \gamma _i$ as an oriented graph $G$ whose vertices are intersection points and ends of arcs, and the orientations of edges are induced by the orientations of $\gamma _1, \dots , \gamma _n$. 
Then the composition of $\varphi $ with the isotopy in (1) gives an automorphism $\varphi _\ast $ on $G$. 
 
\begin{enumerate}[label=\textup{(\arabic*)},start=2]
\item If the automorphism $\varphi _\ast $ is the identity map on $G$ and 
$\varphi $ preserves any connected component of $N-\cup_{i=1}^{n} \gamma _i$, then $\varphi $ is isotopic to the identity relative to $\partial N$. 
\end{enumerate}
\end{prop}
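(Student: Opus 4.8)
The plan is to adapt the proof of the orientable Alexander method (\cite{Farb-Margalit}*{Proposition~2.8}) to non-orientable surfaces; since the conclusion is stated rel $\partial N$, we assume throughout that $\varphi $ fixes $\partial N$ pointwise. For part~(1) the main tool is the bigon criterion, which holds on non-orientable surfaces as well — one may run the standard innermost-disk argument, or pass to the orientation double cover: two essential simple closed curves, or two essential proper arcs, in minimal position bound no bigon or half-bigon. One then normalizes $\varphi $ one curve at a time. After an ambient isotopy rel $\partial N$ we may assume $\varphi (\gamma _1)=\gamma _1$. Inductively, if $\varphi (\gamma _i)=\gamma _i$ for $i<j$, then $\varphi (\gamma _j)$ is isotopic to $\gamma _j$; making the two transverse, as long as they are unequal they bound an innermost (half-)bigon, which can be removed by an isotopy supported in a small disk neighborhood of it, so after finitely many steps $\varphi (\gamma _j)=\gamma _j$. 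Condition~(ii) — among any three of the curves, two are disjoint — is exactly what permits these local isotopies to be chosen so as not to disturb $\gamma _1,\dots ,\gamma _{j-1}$; this bookkeeping is carried out as in \cite{Farb-Margalit}. This proves~(1), and $\varphi $ then restricts to a graph automorphism $\varphi _\ast $ of $G=\bigcup _i\gamma _i$.

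For part~(2), assume $\varphi _\ast =\mathrm{id}_G$ and that $\varphi $ preserves each component of $N-\bigcup _i\gamma _i$. Since $\varphi _\ast $ fixes every vertex and preserves every oriented edge of $G$, on each edge $\varphi $ is a homeomorphism of a closed interval fixing its endpoints, hence isotopic to the identity rel endpoints; assembling these isotopies at the (fixed) vertices and extending ambiently, we may assume $\varphi |_G=\mathrm{id}_G$. By condition~(iii), the closure $\overline{P}$ of each component $P$ of $N-G$ is a disk, a once-punctured disk, or a M\"obius band, and $\partial \overline{P}$ is a union of edges of $G$ and arcs of $\partial N$; since $\varphi $ preserves $P$, fixes $G$ pointwise, and fixes $\partial N$ pointwise, $\varphi |_{\overline{P}}$ fixes $\partial \overline{P}$ pointwise. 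It therefore suffices to show that $\varphi |_{\overline{P}}$ is isotopic to the identity rel $\partial \overline{P}$ in each of the three cases: the resulting isotopies agree on $G\cup \partial N$ and so glue to an isotopy of $\varphi $ to $\mathrm{id}_N$ rel $\partial N$.

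For a disk this is the Alexander lemma, and the once-punctured disk reduces to it. The only place where non-orientability genuinely enters — and the step I expect to be the main obstacle — is the M\"obius-band case: a self-homeomorphism of a M\"obius band $M$ fixing $\partial M$ pointwise must be shown isotopic to the identity rel $\partial M$. I would invoke the classical fact that the mapping class group of the M\"obius band rel boundary is trivial; alternatively, one can argue directly by choosing an essential properly embedded arc $a\subset M$ (so that $M$ cut along $a$ is a disk), isotoping $\varphi $ rel $\partial M$ so that $\varphi (a)=a$ — here one uses the bigon criterion together with the facts that $a$ is, up to isotopy rel endpoints, the unique essential arc with its endpoints and that the boundary Dehn twist of $M$ is isotopically trivial rel $\partial M$, so that no boundary twisting can obstruct matching the arcs — and then applying the Alexander lemma to the induced homeomorphism of the disk obtained by cutting $M$ along $a$. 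Tracking this back gives $\varphi |_M\simeq \mathrm{id}_M$ rel $\partial M$, which completes the proof of~(2).
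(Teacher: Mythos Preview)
The paper states this proposition without proof, as the non-orientable analogue of \cite{Farb-Margalit}*{Proposition~2.8}; the only addition is the remark after the statement explaining why the extra hypothesis in~(2) is imposed. Your write-up thus supplies a proof where the paper offers none, and the route you take is the natural one: part~(1) goes through via the bigon criterion exactly as in the orientable case, and the reduction in~(2) to disks, once-marked disks, and M\"obius bands is correct.

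The only point needing care is the M\"obius-band case, and your primary argument --- citing the classical fact $\mathcal{M}(M,\partial M)=1$ --- is correct and sufficient. Your alternative ``direct'' argument, however, is not independent of that fact: the claim that the essential arc $a$ is unique rel endpoints is \emph{equivalent} to $\mathcal{M}(M,\partial M)=1$ (the group acts simply transitively on the isotopy classes of such arcs), and triviality of the boundary twist is likewise a consequence rather than an input, so as written the alternative is circular. If you want a self-contained verification, pass to the orientation double cover $A\to M$: any $\varphi\in\mathrm{Homeo}(M,\partial M)$ lifts to a class in $\mathcal{M}(A,\partial A)\cong\mathbb{Z}$ commuting with the orientation-reversing deck involution $\tau$, and since conjugation by $\tau$ inverts the generating twist this forces the lift to be trivial; contractibility of $\mathrm{Homeo}_0(A,\partial A)$ then lets one choose the isotopy $\tau$-equivariantly so that it descends.
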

Remark that the assumption that $\varphi $ preserves each component of the complement of $\cup_{i=1}^{n} \gamma _i$ in (2) is not necessary for the orientable case as in Proposition~2.8 in~\cite{Farb-Margalit}. 
Since we consider an orientation-preserving homeomorphism $\varphi $ on an oriented surface in Proposition~2.8 of \cite{Farb-Margalit}, the condition 
follows from the assumption that $\varphi _\ast $ is the identity map on the oriented graph $G$. 
We do not know whether this condition for $\varphi $ is necessary or not.

\subsection{Outline of the proof of Theorem~\ref{main_thm}}\label{section_outline}

In this section, we explain an outline of the proof of Theorem~\ref{main_thm}. 
The precise calculations for \DC s are given in Section~\ref{dc-pres_genus2-3} and \ref{dc-pres_genus4-5}. 
Recall that $\alpha _i$ $(i=1,\dots ,g-1)$, $\beta $, and $\mu _1$ are simple closed curves on $N_g$ defined as in Figure~\ref{scc_closed_nonorisurf}, and we express the homeomorphisms $t_{\alpha _i}$ $(i=1,\dots ,g-1)$, $t_\beta $, $Y_{\mu _1,\alpha _1}$, and $f^{-1}$ for any homeomorphism $f$ by $a_i=i$, $b$, $y$, and $\bar{f}$, respectively. 
By Theorem~\ref{gen_mcg}, $\M (N_g)$ is generated by $a_1,$ $\dots ,$ $a_{g-1}$, $b$, and $y$. 

We start to explain the outline of the proof of Theorem~\ref{main_thm}. 
By Proposition~\ref{prop_topconj}, any involution on $N_g$ for $2\leq g\leq 5$ is \topconj to an involution $\iota _{g;s,t}$ on a non-orientable surface $N$ described as in Figures~\ref{figure_involution_genus2}--\ref{figure_involution_genus5}, where we write $\iota _{g;s,t}=\iota _{g;s}$ when the topological conjugacy class is determined by the signature~$s$. 
We take a homeomorphism $F\colon N\to N_g$ and simple closed curves $\gamma _1^\prime ,\ \dots ,\ \gamma _n^\prime $ on $N$ which satisfies the conditions~(i)--(iii) in Proposition~\ref{alexander_method} and $\iota _{g;s,t}(\gamma _1^\prime \cup \cdots \cup \gamma _n^\prime )=\gamma _1^\prime \cup \cdots \cup \gamma _n^\prime $. 
Then there exists an element $\sigma $ of the symmetric group $S_n$ of degree $n$ and $\varepsilon _j\in \{ -1,1\}$ for $1\leq j\leq n$ such that $\iota _{g;s,t}(\gamma _j^\prime )={(\gamma _{\sigma (j)}^\prime )}^{\varepsilon _j}$. 
Put $\gamma _j=F(\gamma _j^\prime )\subset N_g$ for $1\leq j\leq n$. 
By Proposition~\ref{alexander_method}, a self-homeomorphism $\varphi $ on $N_g$ which satisfies $\varphi (\gamma _j)=\gamma _{\sigma (j)}^{\varepsilon _j}$ for any $1\leq j\leq n$ and preserves each component of $N_g-\gamma _1\cup \cdots \cup \gamma _n$ is determined up to isotopies, and then $\varphi $ is isotopic to $F\circ \iota _{g;s,t}\circ F^{-1}$. 
Thus if we construct a product $w$ of $a_1,\ \dots ,\ a_{g-1}$, $y$, and $b$ 
 such that $w(\gamma _j)=\gamma _{\sigma (j)}^{\varepsilon _j}$ for any $1\leq j\leq n$ up to isotopies and $w$ preserves each component of $N_g-\gamma _1\cup \cdots \cup \gamma _n$, then we have a \DC \ $w$ for $\iota _{g;s,t}$ with respect to generators for $\M (N_g)$ in Theorem~\ref{gen_mcg}. 

Due to simplify the calculations of \DC s, we use the fact by Proposition~\ref{prop_blowup_even-odd} that any involution on $N_{g+1}$ for $g\in \{2, 4\}$ is obtained from an involution on $N_{g}$ by the blowup of $N_{g}$ at one of its fixed points. 
Fix $g\in \{ 2,4\}$ and assume that the involution $\iota _{g;s,t}$ on $N$ has a fixed point. 
We take a point $x_0$ on the right-hand side of the $g$-th crosscap of $N_g$ as in Figures~\ref{scc_gamma_i1i2il} and~\ref{generators_fundamental_grp}, and a homeomorphism $F\colon N\to N_g$ such that the image of a fixed point by $F$ coincides with $x_0$. 
Set $\iota =F\circ \iota _{g;s,t}\circ F^{-1}\colon N_g\to N_g$, and then the point $x_0$ is a fixed point of $\iota $. 
Since the group $\M (N_g, x_0)$ is generated by $a_i$ $(i=1,\dots ,g-1)$, $y$, $b$ (for $g\geq 4$), and $\Delta (l_i)$ $(i=1,\dots ,g)$ by the exact sequence~(\ref{exact1}) and Theorem~\ref{gen_mcg},  there exist a product $w$ of $a_i$ $(i=1,\dots ,g-1)$, $y$, $b\in \M (N_g,x_0)$ and an element $l$ in $\pi _1(N_g,x_0)$ such that $\iota =\Delta (l)w$ in $\M (N_g,x_0)$. 
We construct such an explicit product $w$ and an explicit element $l\in \pi _1(N_g,x_0)$. 
To prove $\iota =\Delta (l)w$ in $\M (N_g,x_0)$, we check $\iota (\gamma _j)=\Delta (l)w(\gamma _j)$ relative to $x_0$. 
By using the forgetful homomorphism $\F \colon \M (N_g,x_0) \to \M (N_g)$, we have a \DC \ $\iota =\F (\Delta (l)w)=w$ on $N_g$. 
Let $\tilde{\iota }$ be the involution on $N_{g+1}\approx (N_g)^\prime $ which is obtained from $\iota $ by the blowup at $x_0$, and $\Phi \colon \M (N_g,x_0)\rightarrow \M (N_{g+1})$ the blowup homomorphism induced by the blowup at $x_0$. 
Then we have 
\[
\tilde{\iota }=\Phi (\iota )=\Phi (\Delta (l)w)=\Phi (\Delta (l))w.
\]
Thus we obtain a \DC \ of $\tilde{\iota }$ if we construct an explicit product $w^\prime $ of $a_i$ $(i=1,\dots ,g)$, $y$, and $b$ such that $\Phi (\Delta (l))=w^\prime $.

For instance, we will construct a \DC \ for $\iota _{4;2,1}$, and give a \DC \ for $\iota _{5;3,2}$ from the \DC \ for $\iota _{4;2,1}$ by a blowup at a fixed point of $\iota _{4;2,1}$. 
Let $N$ be the surface which is homeomorphic to $N_4$ as on the left-hand side in Figure~\ref{representative1_genus4_sgn2-1}, and $\iota _{4;2,1}$ the involution on $N$ induced by the reflection across the $xy$-plane. 
The involution $\iota _{4;2,1}$ has two isolated fixed points, two two-sided \refline s (they are described as the gray points and curves on $N$ on the left-hand side in Figure~\ref{representative1_genus4_sgn2-1}), and no one-sided \refline s.
Let $\alpha _1^\prime $, $\alpha _2^\prime $, $\alpha _3^\prime $, and $\mu _1^\prime $ be simple closed curves on $N$ as on the right-hand side in Figure~\ref{representative1_genus4_sgn2-1}, 
 and $x_{0,2}^\prime $ an isolated fixed point of $\iota _{4;2,1}$ as in Figure~\ref{representative1_genus4_sgn2-1}. 
Recall that we took the model of $N_4$ by the surface as in Figure~\ref{scc_closed_nonorisurf} (see also Figure~\ref{scc_genus4_sgn2-1}). 
The complements $N-(\alpha _1^\prime \cup \alpha _2^\prime \cup \alpha _3^\prime \cup \mu _1^\prime )$ and $N_4-(\alpha _1\cup \alpha _2\cup \alpha _3\cup \mu _1)$ are disks, each intersection of $\alpha _i^\prime $ and $\alpha _{i+1}^\prime $, $\alpha _i$ and $\alpha _{i+1}$ $(i=1, 2)$, $\alpha _1^\prime $ and $\mu _{1}^\prime $, and $\alpha _1$ and $\mu _{1}$ consists of one point, and other pairs of simple closed curves on $N$ or $N_4$ in Figure~\ref{representative1_genus4_sgn2-1} and \ref{scc_genus4_sgn2-1} are disjoint. 
Thus there exists a homeomorphism $F\colon N\to N_4$ such that $F(\alpha _i^\prime )=\alpha _i$ $(i=1,2,3)$, $F(\mu _1^\prime )=\mu _1$, and $F(x_{0,2}^\prime )=x_{0}$, respectively. 
The composition $F\circ \iota _{4;2,1} \circ F^{-1}$ is also an involution on $N_4$ which is \topconj to $\iota _{4;2,1}$. 
We write $F\circ \iota _{4;2,1} \circ F^{-1}$ as $\iota _{4;2,1}$ for convenience. 

\begin{figure}[ht]
\includegraphics[scale=0.88]{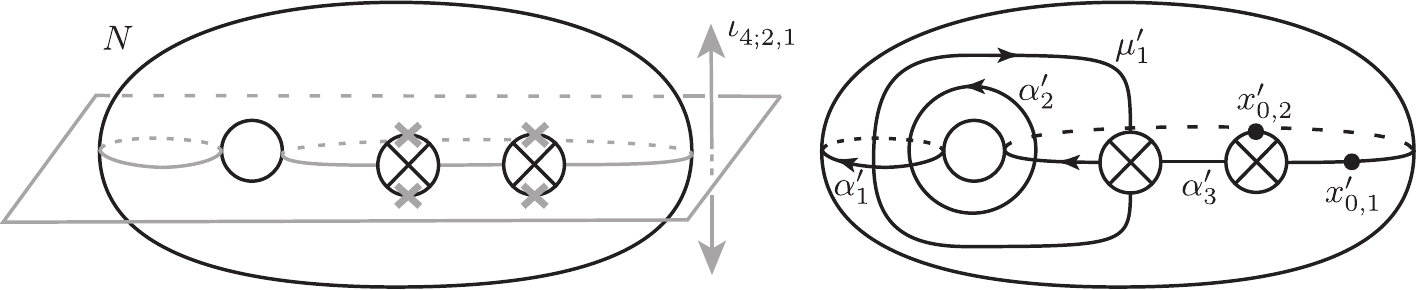}
\caption{The involution $\iota _{4;2,1}$ on $N$, simple closed curves $\alpha _i^\prime $ $(i=1,2,3)$ and $\mu _1^\prime $ on $N$, and points $x_{0,1}^\prime $ and $x_{0,2}^\prime $ of $N$.}\label{representative1_genus4_sgn2-1}
\end{figure}

\begin{figure}[ht]
\includegraphics[scale=1.0]{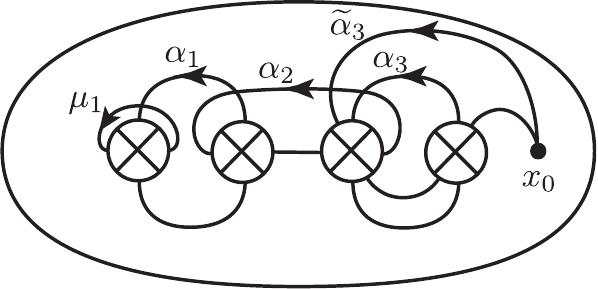}
\caption{A simple closed curve $\widetilde{\alpha}_3$ on $N_4$ based at $x_0$.}\label{scc_genus4_sgn2-1}
\end{figure}

By the definition of $\iota _{4;2,1}$, 
we have
\begin{eqnarray*}
\iota _{4;2,1}(\alpha _i)=\alpha _i \quad (i=1,3),\quad  
\iota _{4;2,1}(\alpha _2)=\alpha _2^{-1}, \quad  
\iota _{4;2,1}(\mu _1)=\mu _1^{-1}, 
\end{eqnarray*}
and $\iota _{4;2,1}(x_0)=x_0$. 
Hence we regard $\iota _{4;2,1}$ as an element of $\M (N_4,x_0)$. 
We can check that $Y_{4,3}\bar{y}(\alpha _i)=\alpha _i$ $(i=1,3)$ and $Y_{4,3}\bar{y}(\mu _1)=\mu _1^{-1}$ in $\M (N_4,x_0)$, and $Y_{4,3}\bar{y}(\alpha _2)$ is isotopic to the simple closed curve on $N_4$ as on the left-hand side in Figure~\ref{calc_genus4_sgn2-1} relative to $x_0$. 
The product $l_3l_4$ in $\pi _1(N_4,x_{0})$ is represented by the simple loop which is described as the dotted loop based at $x_{0}$ on the left-hand side in Figure~\ref{calc_genus4_sgn2-1}. 
The mapping class $\Delta (l_3l_4)\in \M (N_4,x_{0})$ fixes the union $\alpha _1\cup \alpha _3\cup \mu _1$ pointwise, and the image of $Y_{4,3}\bar{y}(\alpha _2)$ by $\Delta (l_3l_4)$ is the simple closed curve on $N_4$ as on the right-hand side in Figure~\ref{calc_genus4_sgn2-1}. 
The image $\Delta (l_3l_4)Y_{4,3}\bar{y}(\alpha _2)$ is isotopic to $\alpha _2^{-1}$ relative to $x_{0}$. 
Thus we have $\Delta (l_3l_4)Y_{4,3}\bar{y}=\iota _{4;2,1}$ in $\M (N_4,x_{0})$ by Proposition~\ref{alexander_method}. 
Using the forgetful homomorphism $\F \colon \M (N_4,x_0) \to \M (N_4)$, we have $\iota _{4;2,1}=\F (\Delta (l_3l_4)Y_{4,3}\bar{y})=Y_{4,3}\bar{y}$ in $\M (N_4)$. 
By Lemma~\ref{rel_y_ij}, the relation $Y_{4,3}=23121y\bar{1}\bar{2}\bar{1}\bar{3}\bar{2}$ holds in $\M (N_4)$. 
Thus we have
\begin{eqnarray*}
\iota _{4;2,1}&=&Y_{4,3}\bar{y}\\
&=&2\underline{31}21y\bar{1}\bar{2}\underline{\bar{1}\bar{3}}\bar{2}\bar{y} \\
&\stackrel{\text{DIS}}{=}&21321y\bar{1}\bar{2}\bar{3}\bar{1}\bar{2}\bar{y} 
\end{eqnarray*}
in $\M (N_4)$, where ``$\stackrel{\text{DIS}}{=}$'' means a deformation of an expressions by the relations~(0), (1), (2), and (3) in Lemma~\ref{braid_rel}. 
Thus we obtain the \DC \ $21321y\bar{1}\bar{2}\bar{3}\bar{1}\bar{2}\bar{y}$ for $\iota _{4;2,1}$. 

\begin{figure}[ht]
\includegraphics[scale=1.1]{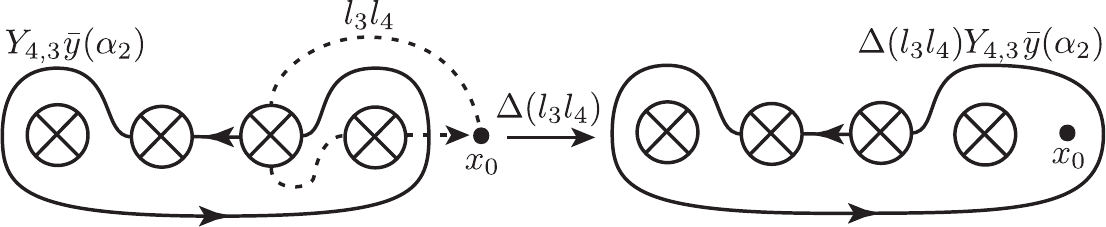}
\caption{Simple closed curves $Y_{4,3}\bar{y}(\alpha _2)$ and $\Delta (l_3l_4)Y_{4,3}\bar{y}(\alpha _2)$ on $N_4$.}\label{calc_genus4_sgn2-1}
\end{figure}

By an argument in the proof of Proposition~\ref{prop_blowup_even-odd} (see also Figure~\ref{table_blowup_genus4-5}), the involution on $N_5$ which is obtained from $\iota _{4;2,1}$ by the blowup at the isolated fixed point $x_0$ of $\iota _{4;2,1}$ is \topconj to $\iota _{5;3,2}$. 
Since $\overline{Y_{5,4}}(\alpha _3)$ is the simple closed curve on $N_5$ as in Figure~\ref{calc_genus5_sgn3-2}, by Lemma~\ref{pushing2}, the mapping class $\Phi (\Delta (l_3l_4))=\Psi (l_3l_4)\in \M (N_5)$ coincides with the product $\bar{3}\cdot \overline{Y_{5,4}}3Y_{5,4}$. 
By Lemma~\ref{rel_y_ij}, the relation $Y_{5,4}=3423121\bar{y}\bar{1}\bar{2}\bar{1}\bar{3}\bar{2}\bar{4}\bar{3}$ holds in $\M (N_5)$. 
Thus we have 
\begin{eqnarray*}
\iota _{5;3,2}&=&\Phi (\iota _{4;2,1})=\Phi (\Delta (l_3l_4)Y_{4,3}\bar{y})\\
&=&\bar{3}\cdot \overline{Y_{5,4}}\cdot 3\cdot Y_{5,4}\cdot Y_{4,3}\bar{y}\\
&=&\bar{3}\cdot 3423121y\underline{\bar{1}\bar{2}\bar{1}}\bar{3}\bar{2}\underline{\bar{4}\bar{3}\cdot 3\cdot 34}23\underline{121}\bar{y}\bar{1}\bar{2}\bar{1}\bar{3}\bar{2}\bar{4}\underline{\bar{3}\cdot 23}\ \underline{121}y\bar{1}\bar{2}\bar{1}\bar{3}\bar{2}\bar{y} \\
&\stackrel{\text{BR}}{=}&423121y\bar{2}\bar{1}\underline{\bar{2}\bar{3}\bar{2}}34\bar{3}\underline{232}12\bar{y}\bar{1}\bar{2}\bar{1}\bar{3}\bar{2}\bar{4}2312y\bar{1}\bar{2}\bar{1}\bar{3}\bar{2}\bar{y} \\
&\stackrel{\text{BR}}{=}&423121y\bar{2}\bar{1}\bar{3}\bar{2}\underline{42}312\bar{y}\bar{1}\bar{2}\bar{1}\bar{3}\underline{\bar{2}\bar{4}}2312y\bar{1}\bar{2}\bar{1}\bar{3}\bar{2}\bar{y} \\
&\stackrel{\text{DIS}}{=}&423121y\bar{2}\bar{1}\underline{\bar{3}43}12\bar{y}\bar{1}\bar{2}\bar{1}\underline{\bar{3}\bar{4}3}12y\bar{1}\bar{2}\bar{1}\bar{3}\bar{2}\bar{y} \\
&\stackrel{\text{BR}}{=}&\underline{42}\ \underline{31}21y\bar{2}\underline{\bar{1}43\bar{4}}12\bar{y}\bar{1}\bar{2}\underline{\bar{1}4\bar{3}\bar{4}}12y\bar{1}\bar{2}\bar{1}\bar{3}\bar{2}\bar{y} \\
&\stackrel{\text{DIS}}{=}&2\underline{41}321y\underline{\bar{2}4}3\underline{\bar{4}2\bar{y}\bar{1}\bar{2}}4\bar{3}\underline{\bar{4}2}y\bar{1}\bar{2}\underline{\bar{1}\bar{3}}\bar{2}\bar{y} \\
&\stackrel{\text{DIS}}{=}&214321y4\underline{\bar{2}32}\bar{y}\bar{1}\underline{\bar{2}\bar{3}2}\bar{4}y\bar{1}\bar{2}\bar{3}\bar{1}\bar{2}\bar{y} \\
&\stackrel{\text{BR}}{=}&214321y432\underline{\bar{3}\bar{y}\bar{1}}3\bar{2}\bar{3}\bar{4}y\bar{1}\bar{2}\bar{3}\bar{1}\bar{2}\bar{y} \\
&\stackrel{\text{DIS}}{=}&214321y432\underline{\bar{y}\bar{1}}\bar{2}\bar{3}\bar{4}y\bar{1}\bar{2}\bar{3}\bar{1}\bar{2}\bar{y} \\
&\stackrel{\text{DIS}}{=}&\underline{21}4321y4321\bar{y}\bar{2}\bar{3}\bar{4}y\bar{1}\bar{2}\bar{3}\bar{1}\bar{2}\bar{y} \\
&\stackrel{\text{CONJ}}{\to }&4321y4321\bar{y}\bar{2}\bar{3}\bar{4}y\bar{1}\bar{2}\bar{3}\bar{1}\bar{2}\bar{y}21, 
\end{eqnarray*}
where ``$\stackrel{\text{CONJ}}{\to }$'' and ``$\stackrel{\text{BR}}{=}$'' means deformations of expressions by a conjugation and the relation~(4) in Lemma~\ref{braid_rel}, respectively. 
Therefore, we obtain the \DC \ $4321y4321\bar{y}\bar{2}\bar{3}\bar{4}y\bar{1}\bar{2}\bar{3}\bar{1}\bar{2}\bar{y}21$ for $\iota _{5;3,2}$. 
For the other cases, we give precise calculations for the proof of Theorem~\ref{main_thm} in Sections~\ref{dc-pres_genus2-3} and \ref{dc-pres_genus4-5}. 

\begin{figure}[ht]
\includegraphics[scale=1.1]{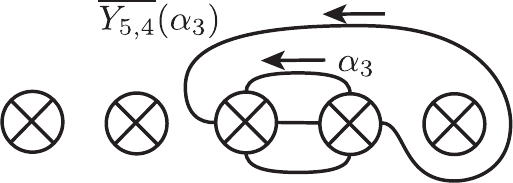}
\caption{The simple closed curve $\overline{Y_{5,4}}(\alpha _3)$ on $N_5$.}\label{calc_genus5_sgn3-2}
\end{figure}

\subsection{The genus 2 and 3 cases}\label{dc-pres_genus2-3}

In Sections~\ref{dc-pres_genus2-3} and \ref{dc-pres_genus4-5}, ``$\stackrel{\text{CONJ}}{\to }$'', ``$\stackrel{\text{(0)}}{=}$'', ``$\stackrel{\text{BR}}{=}$'', and ``$\stackrel{\text{DIS}}{=}$'' in equations mean deformations of expressions by a conjugation, the relation~(0) in Lemma~\ref{braid_rel}, the relation~(4) in Lemma~\ref{braid_rel}, and the relations~(1), (2), and (3) in Lemma~\ref{braid_rel}, respectively.  
In this section, we give calculations of \DC s for involutions on $N_g$ for $g\in \{ 2,3\}$. 
The \DC \ for $\iota _{3;1}$ in Theorem~\ref{main_thm} is given by Stukow~\cite{Stukow2}. 
We remark that the mapping class group $\M (N_2)$ is isomorphic to $\Z _2\oplus \Z _2$ generated by $t_{\alpha _1}$ and $y$ by Lickorish~\cite{Lickorish1}. 
The involution $\iota _{2;2}$ is isotopic to the identity map on $N_2$. 

\subsubsection{\DC s for $\iota _{2;1}$ and $\iota _{3;2}$}\label{section_DC_2-1_3-2}

We construct a \DC \ for $\iota _{2;1}$, and give a \DC \ for $\iota _{3;2}$ from the \DC \ for $\iota _{2;1}$ by a blowup at an isolated fixed point of $\iota _{2;1}$. 
Let $\iota _{2;1}$ be the involution on $N_2$ which is induced by the reflection across the $xy$-plane as on the left-hand side in Figure~\ref{scc_genus2_sgn1} (in this case, we can already regard $\iota _{2;1}$ as an involution on $N_2$). 
The involution $\iota _{2;1}$ has two isolated fixed points and we denote by $x_0$ the isolated fixed point of $\iota _{2;1}$ which lies in the second crosscap. 
From here, we depict only fixed points of an involution in figures which are necessary in the argument.  
Then by an isotopy on $N_2$ which fixes the simple closed curves $\alpha _1$ and $\mu _1$ as in Figure~\ref{scc_genus2_sgn1}, we move $x_0$ into the right-hand side of the second crosscap. 
We regard the resulting homeomorphism and the fixed point by the isotopy as also $\iota _{2;1}$ and $x_0$, respectively.  

\begin{figure}[ht]
\includegraphics[scale=0.85]{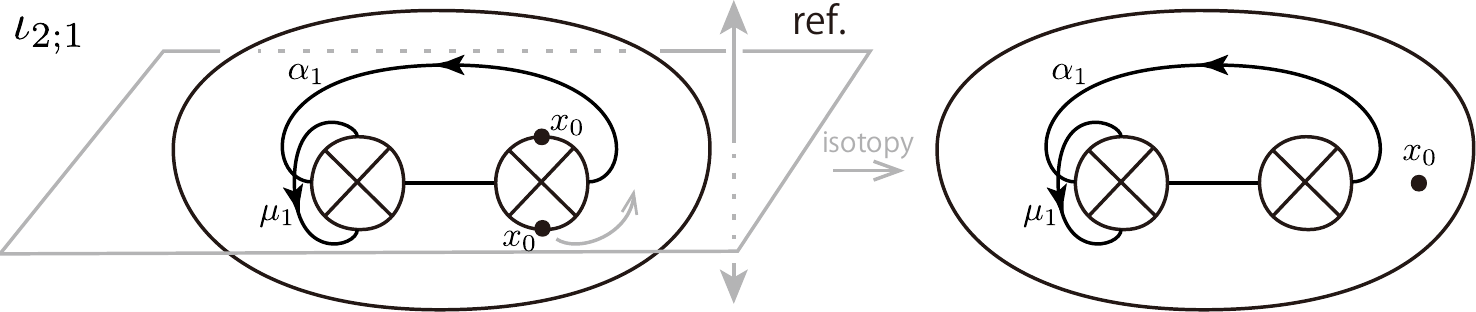}
\caption{An isotopy on $N_2$. }\label{scc_genus2_sgn1}
\end{figure}

By the definition of $\iota _{2;1}$, we have $\iota _{2;1}(\mu _1)=\mu _1^{-1}$, $\iota _{2;1}(\alpha _1)=\alpha _1$, and $\iota _{2;1}(x_0)=x_0$, hence we regard $\iota _{2;1}$ as an element of $\M (N_2,x_0)$. 
Since we can check that $y(\mu _1)=\mu _1^{-1}$ and $y(\alpha _1)=\alpha _1$ in $N_2$ relative to $x_0$, we have $y=\iota _{2;1}$ in $\M (N_2,x_{0})$ by Proposition~\ref{alexander_method}. 
Using the forgetful homomorphism, we have $\iota _{2;1}=\F (y)=y$ in $\M (N_2)$. 
Thus we obtain the \DC \ $y$ for $\iota _{2;1}$. 

By an argument in the proof of Proposition~\ref{prop_blowup_even-odd} (see also Figure~\ref{table_blowup_genus2-3}), the involution on $N_3$ which is obtained from $\iota _{2;1}$ by the blowup at the isolated fixed point $x_0$ of $\iota _{2;1}$ is \topconj to $\iota _{3;2}$. 
Thus we have $\iota _{3;2}=\Phi (\iota _{2;1})=\Phi (y)=y$. 
Therefore, we obtain the \DC \ $y$ for $\iota _{3;2}$.

\subsubsection{\DC s for $\iota _{2;3}$ and $\iota _{3;3}$}\label{section_DC_2-3_3-3}

We construct a \DC \ for $\iota _{2;3}$, and give a \DC \ for $\iota _{3;3}$ from the \DC \ for $\iota _{2;3}$ by a blowup at an isolated fixed point of $\iota _{2;3}$. 
Let $\iota _{2;3}$ be the involution on $N_2$ which is induced by the $\pi $-rotation of $N_2$ on the gray axis as in Figure~\ref{figure_involution_genus2} (in this case, we can already regard $\iota _{2;3}$ as an involution on $N_2$). 
The involution $\iota _{2;3}$ has two isolated fixed points and we denote by $x_0$ an isolated fixed point of $\iota _{2;3}$. 
Then we can regard the simple closed curves $\alpha _1$ and $\mu _1$ as simple closed curves on the complement of $x_0$. 

By the definition of $\iota _{2;3}$, we have $\iota _{2;3}(\mu _1)=\gamma _{2}$, $\iota _{2;3}(\alpha _1)=\alpha _{1}$, and $\iota _{2;3}(x_0)=x_0$. 
Hence we regard $\iota _{2;3}$ as an element of $\M (N_2,x_0)$. 
Since we can check that $1y(\mu _1)=\gamma _{\{ 2\}}$ in $\M (N_2,x_0)$, we have $1y=\iota _{2;3}$ in $\M (N_2,x_{0})$ by Proposition~\ref{alexander_method}. 
Using the forgetful homomorphism, we have $\iota _{2;3}=\F (1y)=1y$ in $\M (N_2)$. 
Thus we obtain the \DC \ $1y$ for $\iota _{2;3}$. 

By an argument in the proof of Proposition~\ref{prop_blowup_even-odd} (see also Figure~\ref{table_blowup_genus2-3}), the involution on $N_3$ which is obtained from $\iota _{2;3}$ by the blowup at the isolated fixed point $x_0$ of $\iota _{2;3}$ is \topconj to $\iota _{3;3}$. 
Thus we have $\iota _{3;3}=\Phi (\iota _{2;3})=\Phi (1y)=1y$. 
Therefore, we obtain the \DC \ $1y$ for $\iota _{3;3}$.

\subsubsection{\DC s for $\iota _{2;4}$ and $\iota _{2;5}$}\label{section_DC_2-4_2-5}

We construct \DC s for $\iota _{2;4}$ and $\iota _{2;5}$. 
Since the involution $\iota _{2;5}$ coincides with the composition $\iota _{2;2}\circ \iota _{2;4}$ and $\iota _{2;2}$ is isotopic to the identity map on $N_4$, the involutions $\iota _{2;4}$ and $\iota _{2;5}$ have the same \DC . 

The involution $\iota _{2;4}$ coincides with the composition $\iota _{2;3}\circ \iota _{2;1}$ and $y$ is an order 2 element in $\M (N_2)$. 
Thus we have $\iota _{2;4}=\iota _{2;3}\cdot \iota _{2;1}=1y\cdot y=1$. 
Therefore, we obtain the \DC \ $1$ for $\iota _{2;4}$ and $\iota _{2;5}$.

\subsection{The genus 4 and 5 cases}\label{dc-pres_genus4-5}
In this section, we give calculations of \DC s for involutions on $N_g$ for $g\in \{ 4,5\}$. 
The \DC s for $\iota _{4;1}$ and $\iota _{5;1}$ in Theorem~\ref{main_thm} are given by Stukow~\cite{Stukow}. 
The \DC s for $\iota _{4;2,1}$ and $\iota _{5;3,2}$ are given in Section~\ref{section_outline}.

\subsubsection{A \DC \ for  $\iota _{5;2}$}\label{section_DC_5-2}

We construct a \DC \ for $\iota _{5;2}$ from the \DC \ for $\iota _{4;2,1}$ by a blowup at a fixed point which lies in the \refline \ of $\iota _{4;2,1}$. 
Recall that $\iota _{4;2,1}$ is the involution on $N$ which is induced by the reflection across the $xy$-plane as on the left-hand side in Figure~\ref{representative1_genus4_sgn2-1}, $\alpha _1^\prime $, $\alpha _2^\prime $, $\alpha _3^\prime $, and $\mu _1^\prime $ are the simple closed curves on $N$ as on the right-hand side in Figure~\ref{representative1_genus4_sgn2-1}, and $x_0$ is the point in $N_4$ which lies on the right-hand side of 4-th crosscap as in Figure~\ref{scc_genus4_sgn2-1}. 
Let $x_{0,1}^\prime $ be a fixed point of $\iota _{4;2,1}$ which lies in the \refline \ $\alpha _3^\prime $ of $\iota _{4;2,1}$ as in Figure~\ref{representative1_genus4_sgn2-1} and $\widetilde{\alpha}_3$ the simple closed curve on $N_4$ based at $x_0$ as in Figure~\ref{scc_genus4_sgn2-1}. 
Then there exists a homeomorphism $F\colon N\to N_4$ such that $F(\alpha _i^\prime )=\alpha _i$ $(i=1,2)$, $F(\alpha _3^\prime )=\widetilde{\alpha}_3$, $F(\mu _1^\prime )=\mu _1$, and $F(x_{0,1}^\prime )=x_{0}$, respectively. 
The composition $F\circ \iota _{4;2,1} \circ F^{-1}$ is also an involution on $N_4$ which is \topconj to $\iota _{4;2,1}$. 
We regard $F\circ \iota _{4;2,1} \circ F^{-1}$ as $\iota _{4;2,1}$ for convenience. 

By the definition of $\iota _{4;2,1}$, 
we have
\begin{eqnarray*}
\iota _{4;2,1}(\alpha _1)=\alpha _1 ,\quad  
\iota _{4;2,1}(\alpha _2)=\alpha _2^{-1}, \quad  
\iota _{4;2,1}(\widetilde{\alpha}_3)=\widetilde{\alpha}_3,\quad  
\iota _{4;2,1}(\mu _1)=\mu _1^{-1}, 
\end{eqnarray*}
and $\iota _{4;2,1}(x_0)=x_0$. 
Hence we regard $\iota _{4;2,1}$ as an element of $\M (N_4,x_0)$. 
We can check that $Y_{4,3}\bar{y}(\alpha _1)=\alpha _1$ and $Y_{4,3}\bar{y}(\mu _1)=\mu _1^{-1}$ in $\M (N_4,x_0)$, and $Y_{4,3}\bar{y}(\alpha _2)$ and $Y_{4,3}\bar{y}(\widetilde{\alpha}_3)$ are isotopic to the simple closed curves on $N_4$ as on the left-hand side in Figure~\ref{calc_genus5_sgn2} relative to $x_0$. 
The mapping class $\Delta (l_3)\in \M (N_4,x_{0})$ fixes the union $\alpha _1\cup \mu _1$ pointwise, and the images of $Y_{4,3}\bar{y}(\alpha _2)$ and $Y_{4,3}\bar{y}(\widetilde{\alpha}_3)$ by $\Delta (l_3)$ are the simple closed curves on $N_4$ as on the right-hand side in Figure~\ref{calc_genus5_sgn2}. 
The simple closed curves $\Delta (l_3)Y_{4,3}\bar{y}(\alpha _2)$ and $\Delta (l_3)Y_{4,3}\bar{y}(\widetilde{\alpha}_3)$ are isotopic to $\alpha _2^{-1}$ and $\widetilde{\alpha}_3$ relative to $x_{0}$, respectively. 
Thus we have $\Delta (l_3)Y_{4,3}\bar{y}=\iota _{4;2,1}$ in $\M (N_4,x_{0})$ by Proposition~\ref{alexander_method}. 

\begin{figure}[ht]
\includegraphics[scale=1.1]{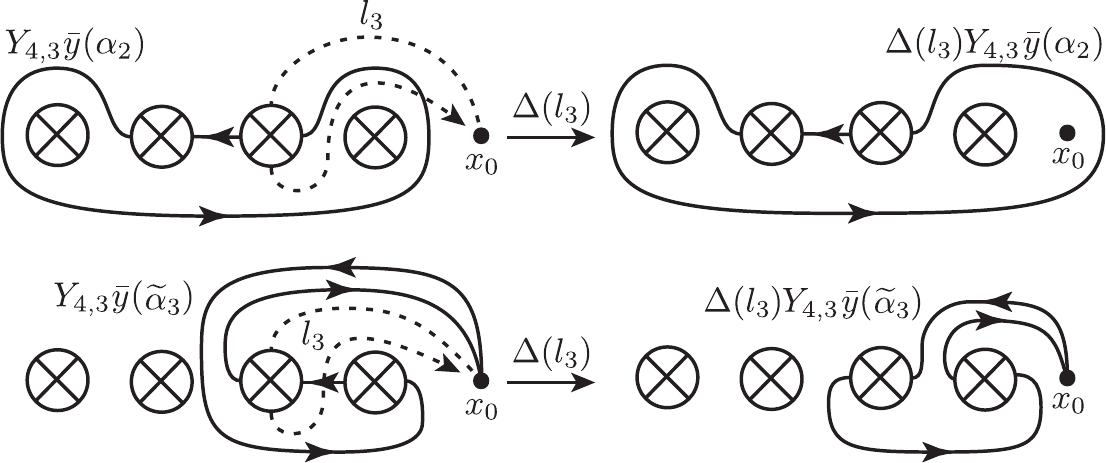}
\caption{Simple closed curves $Y_{4,3}\bar{y}(\widetilde{\alpha}_3)$, $\Delta (l_3)Y_{4,3}\bar{y}(\alpha _2)$, and $\Delta (l_3)Y_{4,3}\bar{y}(\widetilde{\alpha}_3)$ on $N_4$.}\label{calc_genus5_sgn2}
\end{figure}

By an argument in the proof of Proposition~\ref{prop_blowup_even-odd} (see also Figure~\ref{table_blowup_genus4-5}), the involution on $N_5$ which is obtained from $\iota _{4;2,1}$ by the blowup at $x_0$ is \topconj to $\iota _{5;2}$. 
By Lemma~\ref{pushing1}, $\Phi (\Delta (l_3))=\Psi (l_3)=Y_{5,3}$. 
Thus we have $[\iota _{5;2}]=\Phi (\iota _{4;2,1})=\Phi (\Delta (l_3)Y_{4,3}\bar{y})=Y_{5,3}Y_{4,3}\bar{y}$. 
Since $3234123y21(\mu _1)=\gamma _{3}$ and $3234123y21(\alpha _1)=\gamma _{3,5}^{-1}$, we have $Y_{5,3}=3234123y21\bar{y}\bar{1}\bar{2}\bar{y}\bar{3}\bar{2}\bar{1}\bar{4}\bar{3}\bar{2}\bar{3}$. 
By Lemma~\ref{rel_y_ij}, the relation $Y_{4,3}=23121y\bar{1}\bar{2}\bar{1}\bar{3}\bar{2}$ holds in $\M (N_5)$. 
Thus we have 
\begin{eqnarray*}
\iota _{5;2}&=&Y_{5,3}Y_{4,3}\bar{y}\\
&=&3234123y21\bar{y}\bar{1}\bar{2}\bar{y}\bar{3}\bar{2}\bar{1}\bar{4}\bar{3}\bar{2}\underline{\bar{3}\cdot 23}\ \underline{121}y\bar{1}\bar{2}\bar{1}\bar{3}\bar{2}\cdot \bar{y} \\
&\stackrel{\text{BR}}{=}&3234123y21\bar{y}\bar{1}\bar{2}\bar{y}\bar{3}\bar{2}\bar{1}\bar{4}\underline{\bar{3}\bar{2}23\bar{2}2}12y\bar{1}\bar{2}\bar{1}\bar{3}\bar{2}\bar{y} \\
&=&3234123y21\bar{y}\bar{1}\bar{2}\bar{y}\bar{3}\underline{\bar{2}\bar{1}\bar{4}12}y\bar{1}\bar{2}\bar{1}\bar{3}\bar{2}\bar{y} \\
&\stackrel{\text{DIS}}{=}&3234123y21\bar{y}\bar{1}\bar{2}\underline{\bar{y}\bar{3}\bar{4}y}\bar{1}\bar{2}\bar{1}\bar{3}\bar{2}\bar{y} \\
&\stackrel{\text{DIS}}{=}&\underline{3}234123y21\bar{y}\bar{1}\bar{2}\bar{3}\bar{4}\bar{1}\bar{2}\bar{1}\bar{3}\bar{2}\bar{y} \\
&\stackrel{\text{CONJ}}{\to }&234123y21\bar{y}\bar{1}\bar{2}\bar{3}\bar{4}\bar{1}\bar{2}\bar{1}\bar{3}\bar{2}\bar{y}3 \\
&\stackrel{\text{DIS}}{=}&234123y21\bar{y}\bar{1}\bar{2}\bar{3}\bar{4}\underline{\bar{1}\bar{2}\bar{1}}\ \underline{\bar{3}\bar{2}3}\bar{y} \\
&\stackrel{\text{BR}}{=}&234123y21\bar{y}\bar{1}\bar{2}\bar{3}\bar{4}\bar{2}\bar{1}\underline{\bar{2}2}\bar{3}\bar{2}\bar{y} \\
&=&23\underline{4123y}21\bar{y}\bar{1}\bar{2}\bar{3}\bar{4}\bar{2}\bar{1}\bar{3}\bar{2}\bar{y} \\
&\stackrel{\text{DIS}}{=}&2312y4321\bar{y}\bar{1}\bar{2}\bar{3}\bar{4}\bar{2}\bar{1}\bar{3}\bar{2}\bar{y} 
\end{eqnarray*}
Therefore, we obtain the \DC \ $2312y4321\bar{y}\bar{1}\bar{2}\bar{3}\bar{4}\bar{2}\bar{1}\bar{3}\bar{2}\bar{y}$ for $\iota _{5;2}$.

\subsubsection{\DC s for $\iota _{4;2,2}$ and $\iota _{5;3,1}$}\label{section_DC_4-2-2_5-3-1}

We construct a \DC \ for $\iota _{4;2,2}$, and give a \DC \ for $\iota _{5;3,1}$ from the \DC \ for $\iota _{4;2,2}$ by a blowup at an isolated fixed point of $\iota _{4;2,2}$. 
Let $N$ be the surface which is homeomorphic to $N_4$ as on the left-hand side in Figure~\ref{scc_genus4_sgn2-2} and $\iota _{4;2,2}$ the involution on $N$ which is induced by the $\pi $-rotation of $N$ on the gray axis. 
The involution $\iota _{4;2,2}$ has two isolated fixed points and two one-sided \refline s. 
Let $x_{0}^\prime $ be an isolated fixed point of $\iota _{4;2,2}$, $\alpha _1^\prime $, $\alpha _3^\prime $, $\mu _1^\prime $, and $\mu ^\prime $ simple closed curves on $N$ which are preserved by $\iota _{4;2,2}$ setwise as in Figure~\ref{scc_genus4_sgn2-2}, and $\widetilde{\alpha }_1$ and $\mu $ simple closed curves on $N_4$ as on the right-hand side in Figure~\ref{scc_genus4_sgn2-2}. 
Then there exists a homeomorphism $F\colon N\to N_4$ such that $F(\alpha _i^\prime )=\alpha _i$ $(i=1,3)$, $F(\mu _1^\prime )=\mu _1$, $F(\mu ^\prime )=\mu $, and $F(x_{0}^\prime )=x_{0}$, respectively. 
Since the composition $F\circ \iota _{4;2,2} \circ F^{-1}$ is \topconj to $\iota _{4;2,2}$, we regard $F\circ \iota _{4;2,2} \circ F^{-1}$ as $\iota _{4;2,2}$.

\begin{figure}[ht]
\includegraphics[scale=1.1]{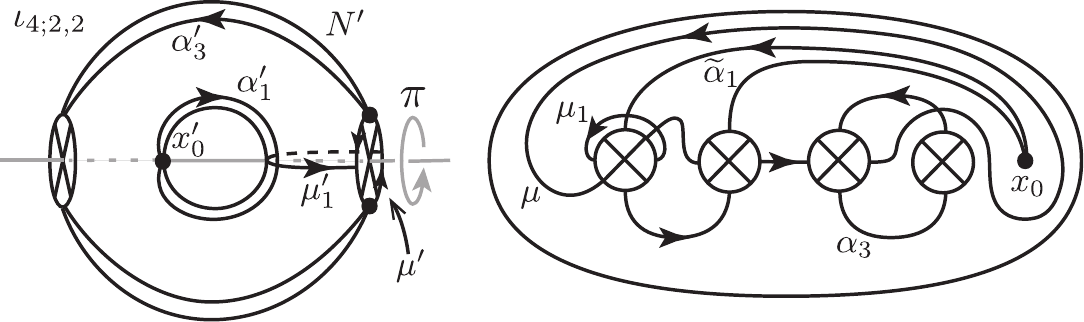}
\caption{Simple closed curves $\alpha _i^\prime $ $(i=1,3)$, $\mu _1^\prime $, and $\mu ^\prime $ on $N$, a point $x_{0}^\prime $ in $N$, and simple closed curves $\widetilde{\alpha}_1$ and $\mu $ on $N_4$.}\label{scc_genus4_sgn2-2}
\end{figure}

By the definition of $\iota _{4;2,2}$, 
we have
\begin{eqnarray*}
\iota _{4;2,2}(\widetilde{\alpha }_1)=\widetilde{\alpha }_1^{-1},\quad  
\iota _{4;2,2}(\alpha _3)=\alpha _3^{-1},\quad  
\iota _{4;2,2}(\mu _1)=\mu _1^{-1}, \quad  
\iota _{4;2,2}(\mu )=\mu , 
\end{eqnarray*}
and $\iota _{4;2,2}(x_0)=x_0$. 
Hence we regard $\iota _{4;2,2}$ as an element of $\M (N_4,x_0)$. 
Let $\alpha$ be a simple closed curve on $N_4$ as in Figure~\ref{scc_genus4_sgn2-2_2}. 
We can check that $Y_{\mu_ 1,\alpha }Y_{2,3}(\mu _1)=\mu _1^{-1}$ in $\M (N_4,x_0)$ and images of $\widetilde{\alpha }_1$, $\alpha _3$, and $\mu $ by $Y_{\mu_ 1,\alpha }Y_{2,3}$ are isotopic to simple closed curves on $N_4$ as in Figure~\ref{calc_genus4_sgn2-2} relative to $x_0$. 
The mapping class $\Delta (l_1^2l_2^2l_3l_1^2l_2)=\Delta (l_1^2l_2)\Delta (l_1^2l_2^2l_3)\in \M (N_4,x_{0})$ fixes $\mu _1$ pointwise, and the images of $Y_{\mu_ 1,\alpha }Y_{2,3}(\widetilde{\alpha }_1)$, $Y_{\mu_ 1,\alpha }Y_{2,3}(\alpha _3)$, and $Y_{\mu_ 1,\alpha }Y_{2,3}(\mu )$ by $\Delta (l_1^2l_2^2l_3l_1^2l_2)$ are isotopic to $\widetilde{\alpha }_1^{-1}$, $\alpha _3^{-1}$, and $\mu $ relative to $x_{0}$, respectively (Figure~\ref{calc_genus4_sgn2-2_2} indicates the calculation for $\widetilde{\alpha }_1$). 
Thus, by Proposition~\ref{alexander_method}, we have $\Delta (l_1^2l_2)\Delta (l_1^2l_2^2l_3)Y_{\mu_ 1,\alpha }Y_{2,3}=\iota _{4;2,2}$ in $\M (N_4,x_{0})$. 
Using the forgetful homomorphism, we have $\iota _{4;2,2}=\F (\Delta (l_1^2l_2)\Delta (l_1^2l_2^2l_3)Y_{\mu_ 1,\alpha }Y_{2,3})=Y_{\mu_ 1,\alpha }Y_{2,3}$ in $\M (N_4)$. 
The relation $Y_{2,3}=12\bar{y}\bar{2}\bar{1}$ holds in $\M (N_4,x_0)$ by Lemma~\ref{rel_y_ij}, and we can check that $Y_{\mu_ 1,\alpha }=\bar{2}y2$ in $\M (N_4,x_0)$. 
Thus we have
\begin{eqnarray*}
\iota _{4;2,1}&=&Y_{\mu_ 1,\alpha }Y_{2,3}=\bar{2}y\underline{2\cdot 12}\bar{y}\bar{2}\bar{1}\\
&\stackrel{\text{BR}}{=}&\bar{2}\underline{y1}21\bar{y}\bar{2}\bar{1}=\bar{2}\bar{1}y21\bar{y}\bar{2}\bar{1}\\ 
&\stackrel{\text{inverse}}{=}&12y\bar{1}\bar{2}\bar{y}12
\end{eqnarray*}
in $\M (N_4)$. 
Thus we obtain the \DC \ $12y\bar{1}\bar{2}\bar{y}12$ for $\iota _{4;2,2}$. 

\begin{figure}[ht]
\includegraphics[scale=1.1]{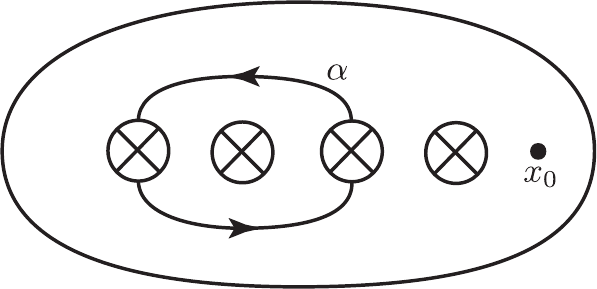}
\caption{A simple closed curve $\alpha $ on $N_4$.}\label{scc_genus4_sgn2-2_2}
\end{figure}

\begin{figure}[ht]
\includegraphics[scale=1.03]{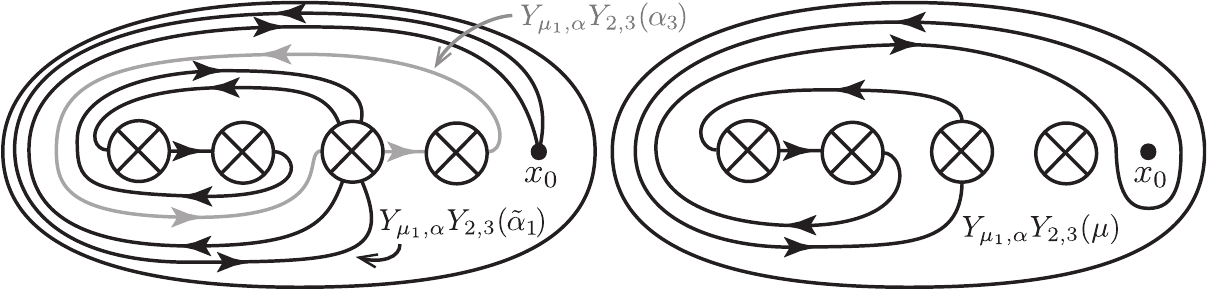}
\caption{Images of $\widetilde{\alpha }_1$, $\alpha _3$, and $\mu $ by $Y_{\mu_ 1,\alpha }Y_{2,3}$. The gray simple closed curve is $Y_{\mu_ 1,\alpha }Y_{2,3}(\alpha _3)$.}\label{calc_genus4_sgn2-2}
\end{figure}

\begin{figure}[ht]
\includegraphics[scale=1.03]{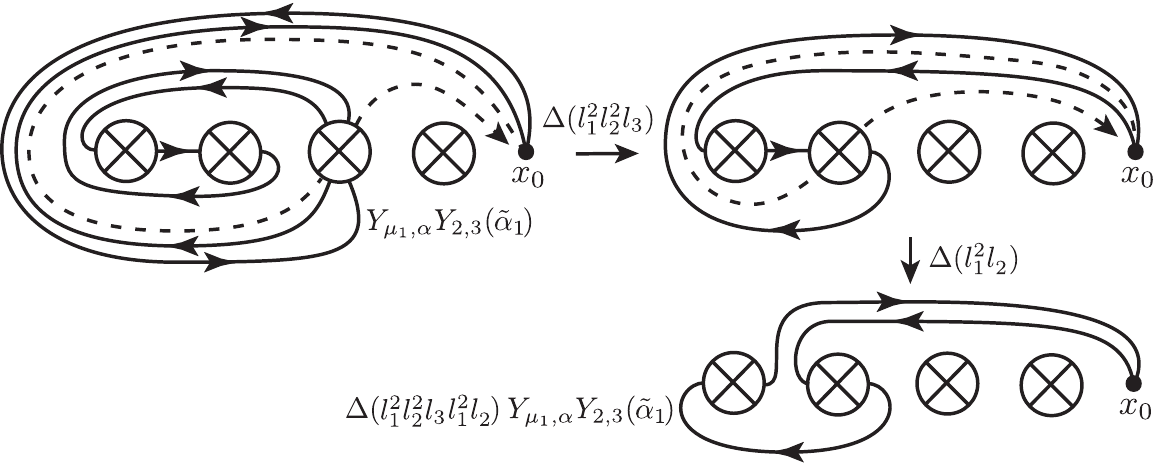}
\caption{Calculation for the image of $\widetilde{\alpha }_1$ by the mapping class $\Delta (l_1^2l_2^2l_3l_1^2l_2)Y_{\mu_ 1,\alpha }Y_{2,3}$. }\label{calc_genus4_sgn2-2_2}
\end{figure}

By an argument in the proof of Proposition~\ref{prop_blowup_even-odd} (see also Figure~\ref{table_blowup_genus4-5}), the involution on $N_5$ which is obtained from $\iota _{4;2,2}$ by the blowup at the isolated fixed point $x_0$ of $\iota _{4;2,2}$ is \topconj to $\iota _{5;3,1}$. 
Using the blowup homomorphism, we can check that $\Phi (\Delta (l_1^2l_2))=\bar{1}\bar{4}\bar{3}\bar{2}\bar{1}\bar{y}12341$ and $\Phi (\Delta (l_1^2l_2^2l_3))=\bar{2}\bar{1}\bar{4}\bar{3}\bar{2}\bar{1}\bar{y}123412$ in $\M (N_5)$. 
Thus we have 
\begin{eqnarray*}
\iota _{5;3,2}&=&\Phi (\iota _{4;2,1})=\Phi (\Delta (l_1^2l_2)\Delta (l_1^2l_2^2l_3)Y_{\mu_ 1,\alpha }Y_{2,3})\\
&=&\bar{1}\bar{4}\bar{3}\bar{2}\bar{1}\bar{y}12341\cdot \bar{2}\bar{1}\bar{4}\bar{3}\bar{2}\bar{1}\bar{y}1234\underline{12\cdot \bar{2}\bar{1}}y21\bar{y}\bar{2}\bar{1}\\
&=&\bar{1}\bar{4}\bar{3}\bar{2}\bar{1}\bar{y}1234\underline{1\bar{2}\bar{1}}\bar{4}\bar{3}\bar{2}\bar{1}\bar{y}1234y21\bar{y}\bar{2}\bar{1}\\
&\stackrel{\text{BR}}{=}&\underline{\bar{1}\bar{4}}\bar{3}\bar{2}\bar{1}\bar{y}123\underline{4\bar{2}}\bar{1}\underline{2\bar{4}}\bar{3}\bar{2}\bar{1}\bar{y}123\underline{4y21\bar{y}\bar{2}\bar{1}}\\
&\stackrel{\text{DIS}}{=}&\bar{4}\bar{1}\bar{3}\bar{2}\bar{1}\bar{y}1\underline{23\bar{2}}4\bar{1}\bar{4}\underline{2\bar{3}\bar{2}}\bar{1}\bar{y}123y21\bar{y}\bar{2}\bar{1}4\\
&\stackrel{\text{BR}}{=}&\bar{4}\bar{1}\bar{3}\bar{2}\bar{1}\bar{y}1\bar{3}234\underline{\bar{1}\bar{4}\bar{3}}\bar{2}3\bar{1}\bar{y}123y21\bar{y}\bar{2}\bar{1}4\\
&\stackrel{\text{DIS}}{=}&\bar{4}\bar{1}\bar{3}\bar{2}\bar{1}\bar{y}1\bar{3}\underline{2\bar{1}\bar{2}}3\bar{1}\bar{y}123y21\bar{y}\bar{2}\bar{1}4\\
&\stackrel{\text{BR}}{=}&\bar{4}\bar{1}\bar{3}\bar{2}\bar{1}\bar{y}1\underline{\bar{3}\bar{1}}\bar{2}\underline{13}\bar{1}\bar{y}123y21\bar{y}\bar{2}\bar{1}4\\
&\stackrel{\text{DIS}}{=}&\bar{4}\bar{1}\bar{3}\bar{2}\bar{1}\bar{y}\bar{3}\bar{2}\underline{3\bar{y}1}2\underline{3y}21\bar{y}\bar{2}\bar{1}4\\
&\overset{\text{DIS}}{\underset{\text{(0)}}{=}}&\underline{\bar{4}\bar{1}}\bar{3}\bar{2}\bar{1}\bar{y}\bar{3}\bar{2}\bar{1}\bar{y}32y321\bar{y}\bar{2}\bar{1}4\\
&\stackrel{\text{CONJ}}{\to }&\bar{3}\bar{2}\bar{1}\bar{y}\bar{3}\bar{2}\bar{1}\bar{y}32y321\bar{y}\bar{2}\bar{1}\bar{1}\\
&\stackrel{\text{inverse}}{=}&1\underline{12y\bar{1}\bar{2}\bar{3}\bar{y}\bar{2}}\bar{3}y123y123\\
&=&12y\bar{1}\bar{2}\bar{3}\bar{y}\bar{2}y123y123,
\end{eqnarray*}
where we use the relation $(2y321\bar{y}\bar{2})1(2y\bar{1}\bar{2}\bar{3}\bar{y}\bar{2})=3$ in the last equality. 
Therefore, we obtain the \DC \ $12y\bar{1}\bar{2}\bar{3}\bar{y}\bar{2}y123y123$ for $\iota _{5;3,1}$.

\subsubsection{A \DC \ for  $\iota _{4;3}$}\label{section_DC_4-3}

We construct a \DC \ for $\iota _{4;3}$. 
Let $N$ be the surface which is homeomorphic to $N_4$ as on the left-hand side in Figure~\ref{scc_genus4_sgn3} and $\iota _{4;3}$ the involution on $N$ which is induced by the reflection across the $xy$-plane. 
Let $\gamma _i^\prime $ $(i=1,2,3)$ and $\alpha _3^\prime $ be simple closed curves on $N$ which are preserved by $\iota _{4;3}$ setwise as in Figure~\ref{scc_genus4_sgn3}, and $\gamma _i$ $(i=1,2,3)$ a simple closed curve on $N_4$ as on the right-hand side in Figure~\ref{scc_genus4_sgn3}. 
Remark that $\gamma _1$ and $\gamma _1^\prime $ are one-sided simple closed curves on non-orientable surfaces, and the other simple closed curves are two-sided. 
Then there exists a homeomorphism $F\colon N\to N_4$ such that $F(\gamma _i^\prime )=\gamma _i$ $(i=1,2,3)$ and $F(\alpha _3^\prime )=\alpha _3$. 
Since the composition $F\circ \iota _{4;3} \circ F^{-1}$ is \topconj to $\iota _{4;3}$, we regard $F\circ \iota _{4;3} \circ F^{-1}$ as $\iota _{4;3}$. 

\begin{figure}[ht]
\includegraphics[scale=1.02]{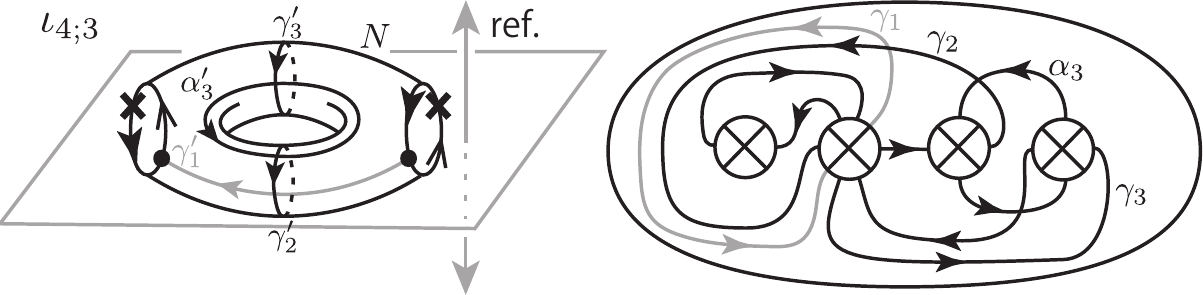}
\caption{Simple closed curves $\gamma _i^\prime $ $(i=1,2,3)$ and $\alpha _3^\prime $ on $N$, and $\gamma _i$ $(i=1,2,3)$ on $N_4$. 
The surface $N$ is obtained from the torus with two boundary components by identifying the boundary along the oriented edges as on the left-hand side. }\label{scc_genus4_sgn3}
\end{figure}

By the definition of $\iota _{4;3}$, we have
\begin{eqnarray*}
\iota _{4;3}(\gamma _1)=\gamma _1,\quad  
\iota _{4;3}(\gamma _i)=\gamma _i^{-1}\quad (i=2,3),\quad  
\iota _{4;3}(\alpha _3)=\alpha _3. 
\end{eqnarray*}
We can check that $23y(23)^2Y_{2,3}\bar{y}(\gamma _1)=\gamma _1$, $23y(23)^2Y_{2,3}\bar{y}(\gamma _i)=\gamma _i^{-1}$ $(i=2,3)$, and $23y(23)^2Y_{2,3}\bar{y}(\alpha _3)=\alpha _3$. 
Thus, by Proposition~\ref{alexander_method}, we have $23y(23)^2Y_{2,3}\bar{y}=\iota _{4;3}$ in $\M (N_4)$. 
By Lemma~\ref{rel_y_ij}, the relation $Y_{2,3}=12\bar{y}\bar{2}\bar{1}$ holds in $\M (N_4)$. 
Thus we have
\begin{eqnarray*}
\iota _{4;3}&=&23y(23)^2Y_{2,3}\bar{y}\\
&=&23y(23)^212\bar{y}\bar{2}\bar{1}\bar{y}
\end{eqnarray*}
in $\M (N_4)$. 
Thus we obtain the \DC \ $23y(23)^212\bar{y}\bar{2}\bar{1}\bar{y}$ for $\iota _{4;3}$.

\subsubsection{\DC s for $\iota _{4;4}$ and $\iota _{5;4}$}\label{section_DC_4-4_5-4}

We construct a \DC \ for $\iota _{4;4}$, and give a \DC \ for $\iota _{5;4}$ from the \DC \ for $\iota _{4;4}$ by a blowup at an isolated fixed point of $\iota _{4;4}$. 
Let $N$ be the surface which is homeomorphic to $N_4$ as on the left-hand side in Figure~\ref{scc_genus4_sgn4} and $\iota _{4;4}$ the involution on $N$ which is induced by the $\pi $-rotation of $N$ on the gray axis. 
The involution $\iota _{4;4}$ has four isolated fixed points. 
Let $x_{0}^\prime $ be an isolated fixed point of $\iota _{4;2,2}$, $\gamma ^\prime $, $\alpha _i^\prime $ $(i=2,3)$, $\mu _1^\prime $, and $\mu ^\prime $ simple closed curves on $N$ whose union is preserved by $\iota _{4;3}$ setwise as in Figure~\ref{scc_genus4_sgn4}, and $\gamma $ a simple closed curve on $N_4$ as on the right-hand side in Figure~\ref{scc_genus4_sgn4}. 
Then there exists a homeomorphism $F\colon N\to N_4$ such that $F(\gamma ^\prime )=\gamma $, $F(\alpha _i^\prime )=\alpha _i$ $(i=2,3)$, $F(\mu _1^\prime )=\mu _1$, $F(\mu ^\prime )=\gamma _{\{ 2,3,4\}}$, and $F(x_{0}^\prime )=x_{0}$, respectively. 
Since the composition $F\circ \iota _{4;4} \circ F^{-1}$ is \topconj to $\iota _{4;4}$, we regard $F\circ \iota _{4;4} \circ F^{-1}$ as $\iota _{4;4}$. 
\begin{figure}[ht]
\includegraphics[scale=1.1]{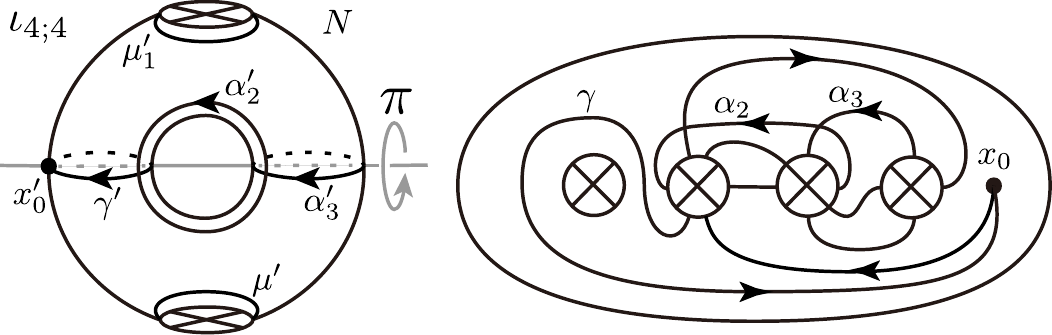}
\caption{Simple closed curves $\gamma ^\prime $, $\alpha _i^\prime $ $(i=2,3)$, $\mu _1^\prime $, and $\mu ^\prime $ and a point $x_0^\prime $ on $N$, and a simple closed curve $\gamma $ on $N_4$.}\label{scc_genus4_sgn4}
\end{figure}

By the definition of $\iota _{4;4}$,  
we have
\begin{eqnarray*}
\iota _{4;4}(\gamma )=\gamma ^{-1}\quad \text{and}\quad   
\iota _{4;4}(\alpha _i)=\alpha _i^{-1}\quad (i=2,3) \quad \text{with orientations},  
\end{eqnarray*}
$\iota _{4;4}(x_0)=x_0$, and $\iota _{4;4}(\mu _1)=\gamma _{2,3,4}$ without orientations. 
Hence we regard $\iota _{4;4}$ as an element of $\M (N_4,x_0)$. 
Let $\beta ^\prime $ be an oriented simple closed closed curves on $N_4$ as in Figure~\ref{scc_genus4_sgn4_2}. 
We can check that $bY_{\mu_ 1,\beta ^\prime }(\bar{2}\bar{3})^3(\gamma )=\gamma ^{-1}$ and $bY_{\mu_ 1,\beta ^\prime }(\bar{2}\bar{3})^3(\alpha _i)=\alpha _i^{-1}$ $(i=2,3)$ in $\M (N_4,x_0)$ and  $bY_{\mu_ 1,\beta ^\prime }(\bar{2}\bar{3})^3(\mu _1)=\gamma _{\{ 2,3,4\}}$ without orientations in $\M (N_4,x_0)$, respectively. 
Thus, by Proposition~\ref{alexander_method}, we have $bY_{\mu_ 1,\beta ^\prime }(\bar{2}\bar{3})^3=\iota _{4;4}$ in $\M (N_4,x_{0})$. 
Let $\alpha $ be a two-sided simple closed closed curve on $N_4$ whose regular neighborhood has an orientation as in Figure~\ref{scc_genus4_sgn4_2}. 
By regarding $Y_{\mu_ 1,\beta ^\prime }$ as a pushing map of first crosscap and Lemma~\ref{pushing2}, the relation $Y_{\mu_ 1,\beta ^\prime }=\overline{t_\alpha }3=\bar{2}\bar{3}\bar{y}\bar{2}y32\cdot 3$ holds in $\M (N_4,x_0)$. 
Thus we have
\begin{eqnarray*}
\iota _{4;4}&=&bY_{\mu_ 1,\beta ^\prime }(\bar{2}\bar{3})^3=b\bar{2}\bar{3}\bar{y}\bar{2}y323\cdot \underline{\bar{2}\bar{3}\bar{2}}\bar{3}\bar{2}\bar{3}\\
&\stackrel{\text{BR}}{=}&b\bar{2}\bar{3}\bar{y}\bar{2}\underline{y\bar{3}}\bar{2}\bar{3}\\ 
&\stackrel{\text{DIS}}{=}&b\bar{2}\bar{3}\bar{y}\bar{2}\bar{3}y\bar{2}\bar{3}
\end{eqnarray*}
in $\M (N_4,x_0)$. 
Using the forgetful homomorphism, 
we obtain the \DC \ $\iota _{4;4}=\F (b\bar{2}\bar{3}\bar{y}\bar{2}\bar{3}y\bar{2}\bar{3})=b\bar{2}\bar{3}\bar{y}\bar{2}\bar{3}y\bar{2}\bar{3}$ for $\iota _{4;4}$. 
\begin{figure}[ht]
\includegraphics[scale=1.1]{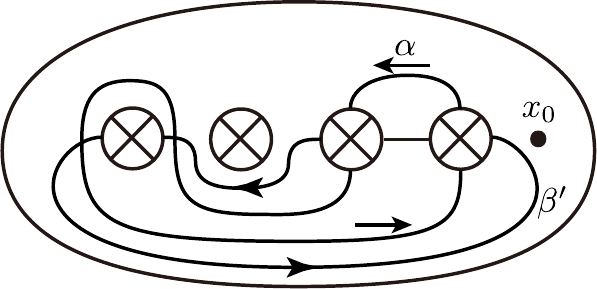}
\caption{Simple closed curves $\beta ^\prime $ and $\alpha $ on $N_4$.}\label{scc_genus4_sgn4_2}
\end{figure}

By an argument in the proof of Proposition~\ref{prop_blowup_even-odd} (see also Figure~\ref{table_blowup_genus4-5}), the involution on $N_5$ which is obtained from $\iota _{4;4}$ by the blowup at the isolated fixed point $x_0$ of $\iota _{4;4}$ is \topconj to $\iota _{5;4}$. 
Thus we have 
\begin{eqnarray*}
\iota _{5;4}&=&\Phi (\iota _{4;4})=\Phi (b\bar{2}\bar{3}\bar{y}\bar{2}\bar{3}y\bar{2}\bar{3})\\
&=&b\bar{2}\bar{3}\bar{y}\bar{2}\bar{3}y\bar{2}\bar{3}. 
\end{eqnarray*}
Therefore, we obtain the \DC \ $b\bar{2}\bar{3}\bar{y}\bar{2}\bar{3}y\bar{2}\bar{3}$ for $\iota _{5;4}$.

\subsubsection{\DC s for $\iota _{4;5}$ and $\iota _{5;5}$}\label{section_DC_4-5_5-5}

We construct a \DC \ for $\iota _{4;5}$, and give a \DC \ for $\iota _{5;5}$ from the \DC \ for $\iota _{4;5}$ by a blowup at an isolated fixed point of $\iota _{4;5}$. 
Let $N$ be the surface which is homeomorphic to $N_4$ as on the left-hand side in Figure~\ref{scc_genus4_sgn5} and $\iota _{4;5}$ the involution on $N$ which is induced by the $\pi $-rotation of $N$ on the gray axis. 
The involution $\iota _{4;5}$ has two isolated fixed points and one two-sided \refline . 
Let $x_{0}^\prime $ be an isolated fixed point of $\iota _{4;5}$, $\alpha _1^\prime $, $\mu _1^\prime $, $\beta ^\prime $, and $\gamma ^\prime $ simple closed curves on $N$ which are preserved by $\iota _{4;5}$ without orientations as on the left-hand side in Figure~\ref{scc_genus4_sgn5}, and $\gamma $ a simple closed curve on $N_4$ as on the right-hand side in Figure~\ref{scc_genus4_sgn5}. 
Then there exists a homeomorphism $F\colon N\to N_4$ such that $F(\alpha _1^\prime )=\alpha _1$, $F(\mu _1^\prime )=\mu _1$, $F(\beta ^\prime )=\beta $, $F(\gamma ^\prime )=\gamma $, and $F(x_{0}^\prime )=x_{0}$, respectively. 
Since the composition $F\circ \iota _{4;5} \circ F^{-1}$ is \topconj to $\iota _{4;5}$, we regard $F\circ \iota _{4;5} \circ F^{-1}$ as $\iota _{4;5}$. 

\begin{figure}[ht]
\includegraphics[scale=1.1]{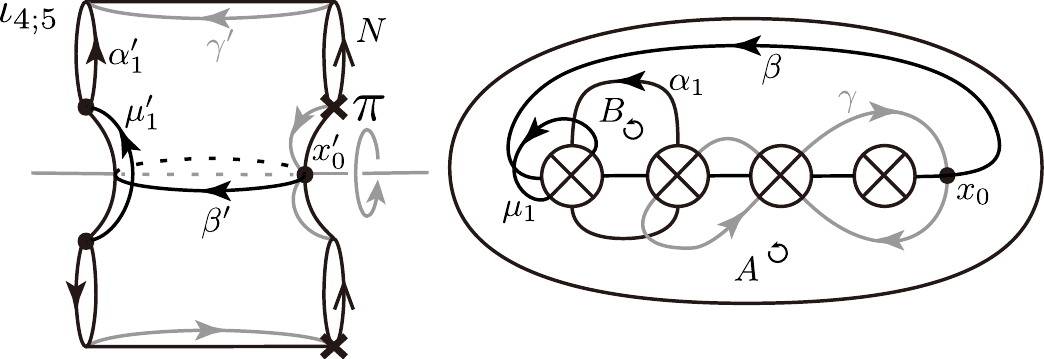}
\caption{Simple closed curves $\alpha _1^\prime $, $\mu _1^\prime $, $\beta ^\prime $, and $\gamma ^\prime $ on $N$, a point $x_{0}^\prime $ in $N$, and a simple closed curve $\gamma $ on $N_4$. 
The surface $N$ is obtained from the 2-sphere with four boundary components by identifying the boundary along the oriented edges as on the left-hand side.}\label{scc_genus4_sgn5}
\end{figure}

By the definition of $\iota _{4;5}$, we have
\begin{eqnarray*}
\iota _{4;5}(\alpha _1)=\alpha _1,\quad  
\iota _{4;5}(\mu _1)=\mu _1^{-1}, \quad  
\iota _{4;5}(\beta )=\beta ^{-1},\quad  
\iota _{4;5}(\gamma )=\gamma ^{-1} , 
\end{eqnarray*}
and $\iota _{4;5}(x_0)=x_0$. 
Hence we regard $\iota _{4;5}$ as an element of $\M (N_4,x_0)$.
We can check that $3\overline{Y_{4,3}}y(\alpha _1)=\alpha _1$ and $3\overline{Y_{4,3}}y(\mu _1)=\mu _1^{-1}$ in $\M (N_4,x_0)$, and images of $\beta $ and $\gamma $ by $3\overline{Y_{4,3}}y$ are isotopic to simple closed curves on $N_4$ as in Figure~\ref{calc_genus4_sgn5} relative to $x_0$, respectively. 
The product $l_1^2l_2^2l_3l_4$ in $\pi _1(N_4,x_{0})$ is represented by the simple loop which is described as the dotted loop based at $x_{0}$ on the left-hand side in Figure~\ref{calc_genus4_sgn5}. 
The mapping class $\Delta (l_1^2l_2^2l_3l_4)\in \M (N_4,x_{0})$ fixes $\alpha _1$ and $\mu _1$ pointwise, and the images of $3\overline{Y_{4,3}}y(\beta )$ and $3\overline{Y_{4,3}}y(\gamma )$ by $\Delta (l_1^2l_2^2l_3l_4)$ are isotopic to $\beta ^{-1}$ and $\gamma ^{-1}$ relative to $x_{0}$, respectively. 
We regard the union $\alpha _1\cup \mu_1\cup \beta \cup \gamma $ as an oriented graph $G$ on $N_4$. 
In $G$, each $e\in \{ \alpha _1, \mu _1, \beta , \gamma \}$ is separated in two edges. 
We denote by $e^1$ and $e^2$ the two edges of $G$ which are obtained from $e\in \{\alpha _1, \mu_1, \beta , \gamma \}$.  
By Proposition~\ref{alexander_method}~(1), the composition $f=\iota _{4;5}\Delta (l_1^2l_2^2l_3l_4)3\overline{Y_{4,3}}y$ induces an automorphism $f_\ast $ on $G$. 
Since each one of simple closed curves $\alpha _1$, $\mu_1$, $\beta $, and $\gamma $ is fixed by $f$ with orientations, if there exists a simple closed curve $e\in \{\alpha _1, \mu_1, \beta , \gamma \}$ such that $f_\ast (e^1)=e^2$, then $f$ switch two simple closed curves which intersect with $e$. 
Thus $f$ fixes each oriented edge of $G$ and we can assume that $f$ fixes $\alpha _1\cup \mu_1\cup \beta \cup \gamma $ pointwise.  
The complement of $\alpha _1\cup \mu_1\cup \beta \cup \gamma $ in $N_4$ is a disjoint union of two disks $A$ and $B$ as in Figure~\ref{complement_disks_genus4_sgn5}. 
Since $f$ fixes the boundary of the disks $A$ and $B$ pointwise and the edge $\beta ^1$ lies in the disk $A$ and does not lie in the disk $B$, $f$ also fixes $A$ and $B$, respectively. 
By Proposition~\ref{alexander_method}, we have $\Delta (l_1^2l_2^2l_3l_4)3\overline{Y_{4,3}}y=\iota _{4;5}$ in $\M (N_4,x_{0})$. 
Using the forgetful homomorphism, we have $\iota _{4;5}=\F (\Delta (l_1^2l_2^2l_3l_4)3\overline{Y_{4,3}}y)=3\overline{Y_{4,3}}y$ in $\M (N_4)$. 
The relation $\overline{Y_{4,3}}=23121\bar{y}\bar{1}\bar{2}\bar{1}\bar{3}\bar{2}$ holds in $\M (N_4,x_0)$ by Lemma~\ref{rel_y_ij}. 
Thus we have
\begin{eqnarray*}
\iota _{4;5}&=&3\overline{Y_{4,3}}y=3\cdot 2\underline{31}21\bar{y}\bar{1}\bar{2}\underline{\bar{1}\bar{3}}\bar{2}\cdot y\\
&\stackrel{\text{DIS}}{=}&321321\bar{y}\bar{1}\bar{2}\bar{3}\bar{1}\bar{2}\underline{y}\\ 
&\stackrel{\text{CONJ}}{\to }&y(321)^2\bar{y}\bar{1}\bar{2}\bar{3}\bar{1}\bar{2}
\end{eqnarray*}
in $\M (N_4)$. 
Thus we obtain the \DC \ $y(321)^2\bar{y}\bar{1}\bar{2}\bar{3}\bar{1}\bar{2}$ for $\iota _{4;5}$. 

\begin{figure}[ht]
\includegraphics[scale=1.03]{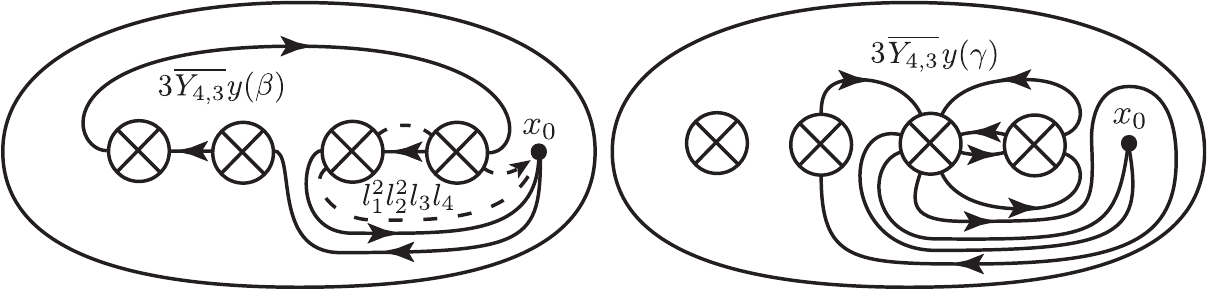}
\caption{Images of $\beta $ and $\gamma $ by $3\overline{Y_{4,3}}y$ and a representative of $l_1^2l_2^2l_3l_4$.}\label{calc_genus4_sgn5}
\end{figure}

\begin{figure}[ht]
\includegraphics[scale=1.2]{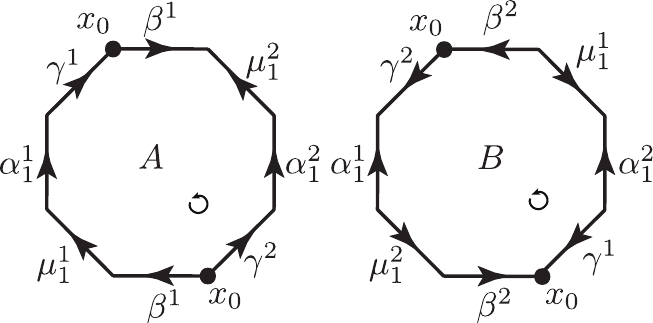}
\caption{Two octagons which are obtained from $N_4$ by cutting $\alpha _1\cup \mu_1\cup \beta \cup \gamma $.}\label{complement_disks_genus4_sgn5}
\end{figure}

By an argument in the proof of Proposition~\ref{prop_blowup_even-odd} (see also Figure~\ref{table_blowup_genus4-5}), the involution on $N_5$ which is obtained from $\iota _{4;5}$ by the blowup at the isolated fixed point $x_0$ of $\iota _{4;5}$ is \topconj to $\iota _{5;5}$. 
The image $Y_{5,4}(\alpha _3)$ is the simple closed curve on $N_5$ as in Figure~\ref{calc_genus5_sgn5}. 
By Lemma~\ref{pushing2}, the mapping class $\Phi (\Delta (l_1^2l_2^2l_3l_4))=\Psi (l_1^2l_2^2l_3l_4)\in \M (N_5)$ coincides with the product $Y_{5,4}3\overline{Y_{5,4}}\cdot \bar{3}$. 
By Lemma~\ref{rel_y_ij}, the relation $Y_{5,4}=3423121\bar{y}\bar{1}\bar{2}\bar{1}\bar{3}\bar{2}\bar{4}\bar{3}$ holds in $\M (N_5)$. 
Thus we have 
\begin{eqnarray*}
\iota _{5;5}&=&\Phi (\iota _{4;5})=\Phi (\Delta (l_1^2l_2^2l_3l_4)3\overline{Y_{4,3}}y)=Y_{5,4}3\overline{Y_{5,4}}\underline{\bar{3}\cdot 3}\overline{Y_{4,3}}y\\
&=&3423121\bar{y}\bar{1}\bar{2}\bar{1}\bar{3}\bar{2}\bar{4}\cdot \underline{3\cdot 423121}y\bar{1}\bar{2}\bar{1}\bar{3}\bar{2}\bar{4}\underline{\bar{3}\cdot 23}\ \underline{121}\bar{y}\bar{1}\bar{2}\bar{1}\bar{3}\bar{2}\cdot y\\
&\stackrel{\text{BR}}{=}&3423121\bar{y}\bar{1}\bar{2}\bar{1}\bar{3}\bar{2}\underline{\bar{4}4}342312y\bar{1}\bar{2}\bar{1}\bar{3}\bar{2}\bar{4}23\underline{\bar{2}2}12\bar{y}\bar{1}\bar{2}\bar{1}\bar{3}\bar{2}y\\
&=&3423121\bar{y}\underline{\bar{1}\bar{2}\bar{1}}\ \underline{\bar{3}\bar{2}3}42312y\bar{1}\bar{2}\bar{1}\bar{3}\bar{2}\bar{4}2312\bar{y}\underline{\bar{1}\bar{2}\bar{1}\bar{3}\bar{2}}y\\
&\stackrel{\text{BR}}{=}&3423121\bar{y}\bar{2}\underline{\bar{1}\bar{3}}\bar{2}\underline{4231}2y\bar{1}\bar{2}\bar{1}\bar{3}\underline{\bar{2}\bar{4}}2312\bar{y}\bar{2}\bar{1}\bar{3}\bar{2}\underline{\bar{3}y}\\
&\stackrel{\text{DIS}}{=}&3423121\bar{y}\bar{2}\bar{3}\underline{\bar{1}\bar{2}21}432y\bar{1}\bar{2}\bar{1}\bar{3}\bar{4}\underline{\bar{2}2}312\bar{y}\bar{2}\bar{1}\bar{3}\bar{2}y\bar{3}\\
&=&3423121\bar{y}\bar{2}\bar{3}432y\bar{1}\bar{2}\bar{1}\underline{\bar{3}\bar{4}31}2\bar{y}\bar{2}\bar{1}\bar{3}\bar{2}y\bar{3}\\
&\stackrel{\text{DIS}}{=}&3423121\bar{y}\bar{2}\bar{3}432y\bar{1}\bar{2}\underline{\bar{1}1}\bar{3}\bar{4}32\bar{y}\bar{2}\bar{1}\bar{3}\bar{2}y\bar{3}\\
&=&3423121\bar{y}\bar{2}\underline{\bar{3}43}2y\underline{\bar{1}\bar{2}\bar{3}\bar{4}32}\bar{y}\bar{2}\bar{1}\bar{3}\bar{2}y\bar{3}\\
&\stackrel{\text{BR, DIS}}{=}&3\underline{4231}21\bar{y}\underline{\bar{2}4}3\bar{4}\underline{2y43}\bar{1}\bar{2}\bar{3}\bar{4}\bar{y}\bar{2}\underline{\bar{1}\bar{3}}\bar{2}y\bar{3}\\
&\stackrel{\text{DIS}}{=}&3214321\bar{y}4\bar{2}\underline{323}y\bar{1}\bar{2}\bar{3}\bar{4}\bar{y}\bar{2}\bar{3}\bar{1}\bar{2}y\bar{3}\\
&\stackrel{\text{BR}}{=}&\underline{321}4321\bar{y}432y\bar{1}\bar{2}\bar{3}\bar{4}\bar{y}\bar{2}\bar{3}\bar{1}\bar{2}y\bar{3}\\
&\stackrel{\text{CONJ}}{\to }&4321\bar{y}432y\bar{1}\bar{2}\bar{3}\bar{4}\bar{y}\bar{2}\bar{3}\bar{1}\bar{2}y21
\end{eqnarray*}
Therefore, we obtain the \DC \ $4321\bar{y}432y\bar{1}\bar{2}\bar{3}\bar{4}\bar{y}\bar{2}\bar{3}\bar{1}\bar{2}y21$ for $\iota _{5;5}$. 

\begin{figure}[ht]
\includegraphics[scale=1.1]{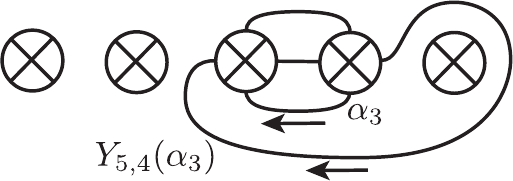}
\caption{The simple closed curve $Y_{5,4}(\alpha _3)$ on $N_5$.}\label{calc_genus5_sgn5}
\end{figure}

\subsubsection{A \DC \ for  $\iota _{4;6,1}$}\label{section_DC_4-6-1}

We construct a \DC \ for $\iota _{4;6,1}$. 
Let $N$ be the surface which is homeomorphic to $N_4$ as on the left-hand side in Figure~\ref{scc_genus4_sgn6-1} and $\iota _{4;6,1}$ the involution on $N$ which is induced by the reflection across the $yz$-plane. 
Let $\alpha _1^\prime $, $\mu _1^\prime $, and $\mu ^\prime $ be simple closed curves on $N$ which are preserved by $\iota _{4;6,1}$ setwise as in Figure~\ref{scc_genus4_sgn6-1}, and $\mu $ a simple closed curve on $N_4$ as on the right-hand side in Figure~\ref{scc_genus4_sgn6-1}. 
Then there exists a homeomorphism $F\colon N\to N_4$ such that $F(\alpha _1^\prime )=\alpha _1$, $F(\mu _1^\prime )=\mu _1$, and $F(\mu ^\prime )=\mu $. 
Since the composition $F\circ \iota _{4;6,1} \circ F^{-1}$ is \topconj to $\iota _{4;6,1}$, we regard $F\circ \iota _{4;6,1} \circ F^{-1}$ as $\iota _{4;6,1}$. 

\begin{figure}[ht]
\includegraphics[scale=1.02]{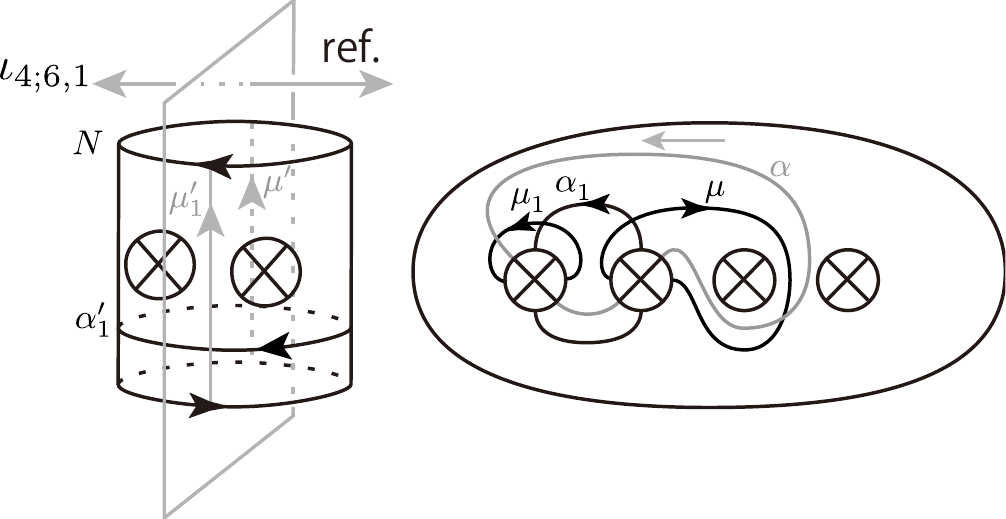}
\caption{Simple closed curves $\alpha _1^\prime $, $\mu _1^\prime $, and $\mu ^\prime $ on $N$ and simple closed curves $\mu $ and $\alpha $ on $N_4$. }\label{scc_genus4_sgn6-1}
\end{figure}

By the definition of $\iota _{4;6,1}$, we have
\begin{eqnarray*}
\iota _{4;6,1}(\alpha _1)=\alpha _1^{-1},\quad  
\iota _{4;6,1}(\mu _1)=\mu _1,\quad  
\iota _{4;6,1}(\mu )=\mu  \quad \text{with orientations}, 
\end{eqnarray*}
and $\iota _{4;6,1}(\gamma _{3})=\gamma _{4}$ without orientations. 
Let $\alpha$ be a simple closed curve on $N_4$ whose regular neighborhood has an orientation as in Figure~\ref{scc_genus4_sgn6-1}. 
We can check that $t_{\alpha }\bar{1}y21123(\alpha _1)=\alpha _1^{-1}$, $t_{\alpha }\bar{1}y21123(\mu _1)=\mu _1$, and $t_{\alpha }\bar{1}y21123(\mu )=\mu $ with orientations, and $t_{\alpha }\bar{1}y21123(\gamma _{3})=\gamma _{4}$ without orientations. 
Thus, by Proposition~\ref{alexander_method}, we have $t_{\alpha }\bar{1}y21123=\iota _{4;6,1}$ in $\M (N_4)$. 
Since $21y(\alpha _2)=\alpha $, the relation $t_{\alpha }=21y2\bar{y}\bar{1}\bar{2}$ holds in $\M (N_4)$. 
Thus we have
\begin{eqnarray*}
\iota _{4;6,1}&=&t_{\alpha }\bar{1}y21123=21y2\underline{\bar{y}\bar{1}}\bar{2}\cdot \bar{1}y21123\\
&=&21y21\underline{\bar{y}\bar{2}\bar{1}y21}123\\
&=&21y\underline{212}\bar{y}\bar{2}\bar{1}y\underline{212}3\\
&\stackrel{\text{BR}}{=}&21\underline{y1}21\bar{y}\bar{2}\bar{1}y1213\\
&\stackrel{\text{(0)}}{=}&2y21\bar{y}\bar{2}\bar{1}y12\underline{13}\\
&\stackrel{\text{DIS}}{=}&2y21\bar{y}\bar{2}\bar{1}y123\underline{1}\\
&\stackrel{\text{CONJ}}{\to }&12y21\bar{y}\bar{2}\bar{1}y123
\end{eqnarray*}
in $\M (N_4)$, where we use the relation $(\bar{y}\bar{2}\bar{1}y2)1(\bar{y}\bar{2}\bar{1}y2)^{-1}=2$ in the fourth equality. 
Thus we obtain the \DC \ $12y21\bar{y}\bar{2}\bar{1}y123$ for $\iota _{4;6,1}$.

\subsubsection{A \DC \ for  $\iota _{4;6,2}$}\label{section_DC_4-6-2}

We construct a \DC \ for $\iota _{4;6,2}$. 
Let $N$ be the surface which is homeomorphic to $N_4$ as on the left-hand side in Figure~\ref{scc_genus4_sgn6-2} and $\iota _{4;6,2}$ the involution on $N$ which is induced by the reflection across the $yz$-plane. 
Let $\alpha _i^\prime $ $(i=2, 3)$ and $\gamma ^\prime $ be simple closed curves on $N$ which are preserved by $\iota _{4;6,2}$ setwise as in Figure~\ref{scc_genus4_sgn6-2}, and $\gamma $ a simple closed curve on $N_4$ as on the right-hand side in Figure~\ref{scc_genus4_sgn6-2}. 
Then there exists a homeomorphism $F\colon N\to N_4$ such that $F(\alpha _i^\prime )=\alpha _i$ $(i=2,3)$ and $F(\gamma ^\prime )=\gamma $. 
Since the composition $F\circ \iota _{4;6,2} \circ F^{-1}$ is \topconj to $\iota _{4;6,2}$, we regard $F\circ \iota _{4;6,2} \circ F^{-1}$ as $\iota _{4;6,2}$. 

\begin{figure}[ht]
\includegraphics[scale=1.02]{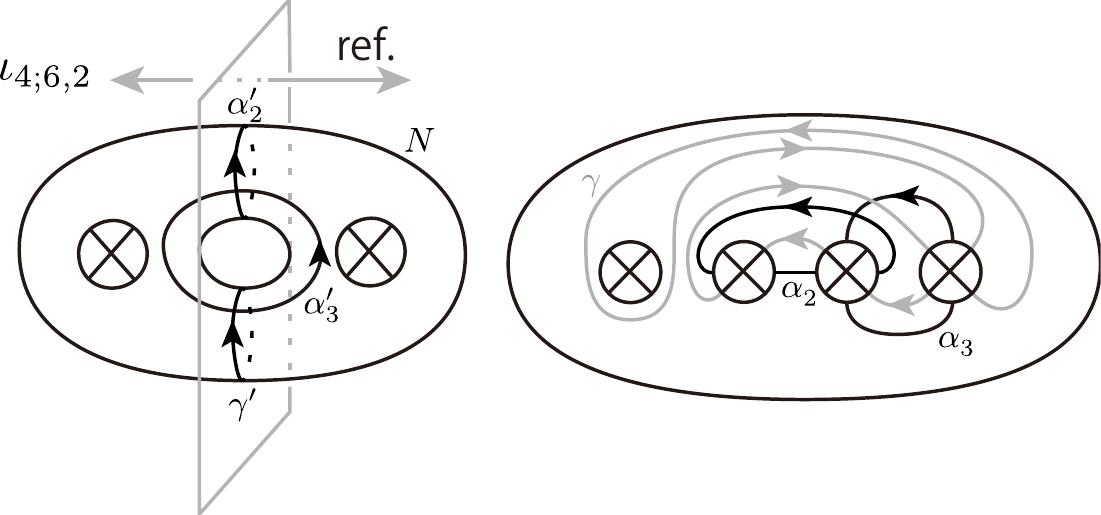}
\caption{Simple closed curves $\alpha _i^\prime $ $(i=2, 3)$ and $\gamma ^\prime $ on $N$ and a simple closed curve $\gamma $ on $N_4$. }\label{scc_genus4_sgn6-2}
\end{figure}

By the definition of $\iota _{4;6,2}$, we have
\begin{eqnarray*}
\iota _{4;6,2}(\alpha _2)=\alpha _2,\quad  
\iota _{4;6,2}(\alpha _3)=\alpha _3^{-1},\quad  
\iota _{4;6,2}(\gamma )=\gamma  \quad \text{with orientations}, 
\end{eqnarray*}
and $\iota _{4;6,2}(\mu _1)=\gamma _{2,3,4}$ without orientations. 
We can check that $bY_{\mu _1, \beta }\bar{y}Y_{4,3}3^223^22(\alpha _2)=\alpha _2$, $bY_{\mu _1, \beta }\bar{y}Y_{4,3}3^223^22(\alpha _3)=\alpha _3^{-1}$, and $bY_{\mu _1, \beta }\bar{y}Y_{4,3}3^223^22(\gamma )=\gamma ^{-1}$ with orientations, and $bY_{\mu _1, \beta }\bar{y}Y_{4,3}3^223^22(\mu _1)=\gamma _{2,3,4}$ without orientations. 
Thus, by Proposition~\ref{alexander_method}, we have $bY_{\mu _1, \beta }\bar{y}Y_{4,3}3^223^22=\iota _{4;6,2}$ in $\M (N_4)$. 
By Lemma~\ref{rel_y_ij}, the relation $Y_{4,3}=23121y\bar{1}\bar{2}\bar{1}\bar{3}\bar{2}$ holds in $\M (N_4)$. 
By regarding $Y_{\mu_ 1,\beta }$ as a pushing map of first crosscap and Lemma~\ref{pushing2}, the relation $Y_{\mu_ 1,\beta }=Y_{\mu _1, \gamma _{1,3,4}}\cdot y=(\bar{2}\bar{3}y2\bar{y}32\cdot \bar{3})\cdot y$ holds in $\M (N_4)$. 
Thus we have
\begin{eqnarray*}
\iota _{4;6,2}&=&bY_{\mu _1, \beta }\bar{y}Y_{4,3}3^223^22=b\cdot \bar{2}\bar{3}y2\bar{y}32\bar{3}y\cdot \bar{y}\cdot \underline{23121}y\underline{\bar{1}\bar{2}\bar{1}\bar{3}\bar{2}}\cdot 3\underline{323}32\\
&\stackrel{\text{BR}}{=}&b\bar{2}\bar{3}y2\bar{y}32\underline{\bar{3}3}2312y\bar{2}\bar{1}\underline{\bar{3}\bar{2}\bar{3}323}232\\
&=&b\bar{2}\bar{3}y2\bar{y}322312y\underline{\bar{2}\bar{1}2}32\\
&\stackrel{\text{BR}}{=}&b\bar{2}\bar{3}y2\bar{y}322312\underline{y1}\bar{2}\bar{1}32\\
&=&b\bar{2}\bar{3}y2\bar{y}32\underline{2312\bar{1}}y\bar{2}\bar{1}32\\
&\stackrel{\text{BR}}{=}&b\bar{2}\bar{3}y2\underline{\bar{y}3}2\bar{3}2312y\bar{2}\bar{1}32\\
&\stackrel{\text{DIS}}{=}&b\bar{2}\bar{3}y23\bar{y}2\bar{3}2312y\bar{2}\underline{\bar{1}32}\\
&\stackrel{\text{CONJ}}{\to }&\underline{\bar{1}32b\bar{2}\bar{3}y}23\bar{y}2\bar{3}2312y\bar{2}\\
&\overset{\text{DIS}}{\underset{\text{(0)}}{=}}&by123\bar{y}2\bar{3}2312y\bar{2}
\end{eqnarray*}
in $\M (N_4)$. 
Thus we obtain the \DC \ $by123\bar{y}2\bar{3}2312y\bar{2}$ for $\iota _{4;6,2}$.

\subsubsection{\DC s for $\iota _{4;7}$ and $\iota _{5;6}$}\label{section_DC_4-7_5-6}

We construct a \DC \ for $\iota _{4;7}$, and give a \DC \ for $\iota _{5;6}$ from the \DC \ for $\iota _{4;7}$ by a blowup at an isolated fixed point of $\iota _{4;7}$. 
Let $\iota _{4;7}$ be the involution on $N_4$ which is induced by the $\pi $-rotation of $N_4$ on the gray axis as in Figure~\ref{figure_involution_genus4} (in this case, we can already regard $\iota _{4;7}$ as an involution on $N_4$). 
The involution $\iota _{4;7}$ has two isolated fixed points and we denote by $x_0$ the top isolated fixed point of $\iota _{4;7}$. 

By the definition of $\iota _{4;7}$, we have
\begin{eqnarray*}
\iota _{4;7}(\gamma _{i})=\gamma _{i+2}\quad (i=1,2),\quad  
\iota _{4;7}(\gamma _{3})=\gamma _{1}=\mu _1,\quad 
\iota _{4;7}(\beta )=\beta ,  
\end{eqnarray*}
and $\iota _{4;7}(x_0)=x_0$. 
Hence we regard $\iota _{4;7}$ as an element of $\M (N_4,x_0)$.
We can check that $(123)^2(\gamma _{i})=\gamma _{i+2}$ $(i=1,2)$ and $(123)^2(\beta )=\beta $ in $\M (N_4,x_0)$, and the image of $\gamma _{3}$ by $(123)^2$ is isotopic to a simple closed curve on $N_4$ as on the left-hand side in Figure~\ref{calc_genus4_sgn7} relative to $x_0$. 
The product $l_1l_2$ in $\pi _1(N_4,x_{0})$ is represented by the simple loop which is described as the dotted loop based at $x_{0}$ on the left-hand side in Figure~\ref{calc_genus4_sgn7}. 
The mapping class $\Delta (l_1l_2)\in \M (N_4,x_{0})$ fixes $\gamma _{i}$ $(i=3,4)$ and $\beta $ pointwise, and the images of $(123)^2(\gamma _{3})$ by $\Delta (l_1l_2)$ are isotopic to $\mu _1$ relative to $x_{0}$. 
By Proposition~\ref{alexander_method}, we have $\Delta (l_1l_2)(123)^2=\iota _{4;7}$ in $\M (N_4,x_{0})$. 
Using the forgetful homomorphism, we have $\iota _{4;7}=\F (\Delta (l_1l_2)(123)^2)=(123)^2$ in $\M (N_4)$. 
Thus we obtain the \DC \ $(123)^2$ for $\iota _{4;7}$. 

\begin{figure}[ht]
\includegraphics[scale=1.0]{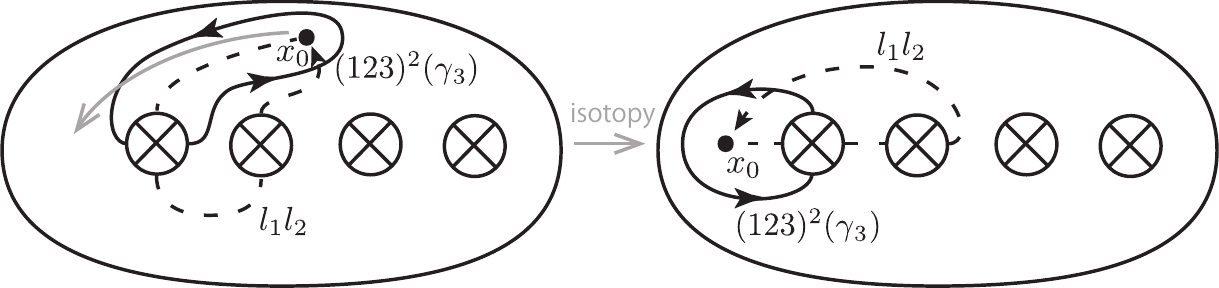}
\caption{The image of $\gamma _{3}$ by $(123)^2$ and a representative of $l_1l_2$.}\label{calc_genus4_sgn7}
\end{figure}

By an argument in the proof of Proposition~\ref{prop_blowup_even-odd} (see also Figure~\ref{table_blowup_genus4-5}), the involution on $N_5$ which is obtained from $\iota _{4;7}$ by the blowup at the isolated fixed point $x_0$ of $\iota _{4;7}$ is \topconj to $\iota _{5;6}$. 
In this case, by an isotopy on $N_4$ as in Figure~\ref{calc_genus4_sgn7}, we especially regard the crosscap which is obtained from a neighborhood of $x_0$ by the blowup at $x_0$ as the first crosscap on $N_5$. 
Let $\Phi \colon \M (N_4, x_0)\to \M (N_5)$ be the blowup homomorphism induced by the blowup at $x_0$. 
Remark that ,under this homomorphism $\Phi $, we have $\Phi (t_{\alpha _i})=t_{\alpha _{i+1}}$ for $i=1,2,3$. 
By Lemma~\ref{pushing2}, the mapping class $\Phi (\Delta (l_1l_2))=\Psi (l_1l_2)\in \M (N_5)$ coincides with the product $y2\bar{y}\cdot \bar{2}$. 
Thus we have 
\begin{eqnarray*}
\iota _{5;6}&=&\Phi (\iota _{4;7})=\Phi (\Delta (l_1l_2)(123)^2)=y2\bar{y}\bar{2}\cdot (234)^2\\
&=&y2\underline{\bar{y}34}234\\
&\stackrel{\text{DIS}}{=}&y234\bar{y}234
\end{eqnarray*}
Therefore, we obtain the \DC \ $y234\bar{y}234$ for $\iota _{5;6}$.

\subsubsection{A \DC \ for  $\iota _{4;8,1}$}\label{section_DC_4-8-1}

We construct a \DC \ for $\iota _{4;8,1}$. 
Let $\iota _{4;8,1}$ be the involution on $N_4$ which is induced by the reflection across the $yz$-plane as in Figure~\ref{figure_involution_genus4} (in this case, $\iota _{4;8,1}$ is already an involution on $N_4$). 
By the definition of $\iota _{4;8,1}$, we have
\begin{eqnarray*}
\iota _{4;8,1}(\gamma _{i})=\gamma _{5-i}^{-1}\quad (i=1,2,3),\quad  
\iota _{4;8,1}(\beta )=\beta ^{-1}.  
\end{eqnarray*}
We regard $\iota _{4;8,1}$ as an element of $\M (N_4)$. 
We can check that $(\bar{y}\bar{2}y)12\bar{y}321(\gamma _{i})=\gamma _{5-i}^{-1}$ $(i=1,2,3)$ and $(\bar{y}\bar{2}y)12\bar{y}321(\beta )=\beta ^{-1}$ in $\M (N_4)$. 
Thus, by Proposition~\ref{alexander_method}, we have 
\begin{eqnarray*}
\iota _{4;8,1}=\bar{y}\bar{2}y12\bar{y}321
\end{eqnarray*}
in $\M (N_4)$. 
Thus we obtain the \DC \ $\bar{y}\bar{2}y12\bar{y}321$ for $\iota _{4;8,1}$.

\subsubsection{A \DC \ for  $\iota _{4;8,2}$}\label{section_DC_4-8-2}

We construct a \DC \ for $\iota _{4;8,2}$. 
Let $N$ be the surface which is homeomorphic to $N_4$ as on the left-hand side in Figure~\ref{scc_genus4_sgn8-2} and $\iota _{4;8,2}$ the involution on $N$ which is induced by the reflection across the $yz$-plane. 
Let $\gamma ^\prime $, $\gamma _{1,3}^\prime $, $\mu $, and $\mu ^\prime $ be simple closed curves on $N$ which are preserved by $\iota _{4;6,1}$ setwise as in Figure~\ref{scc_genus4_sgn8-2}, and $\gamma $ a simple closed curve on $N_4$ as on the right-hand side in Figure~\ref{scc_genus4_sgn8-2}. 
Then there exists a homeomorphism $F\colon N\to N_4$ such that $F(\gamma ^\prime )=\gamma $, $F(\gamma _{1,3}^\prime )=\gamma _{1,3}$, $F(\mu )=\gamma _{1,2,3}$, and $F(\mu ^\prime )=\gamma _{4}$. 
Since the composition $F\circ \iota _{4;8,2} \circ F^{-1}$ is \topconj to $\iota _{4;8,2}$, we regard $F\circ \iota _{4;8,2} \circ F^{-1}$ as $\iota _{4;8,2}$. 

\begin{figure}[ht]
\includegraphics[scale=1.02]{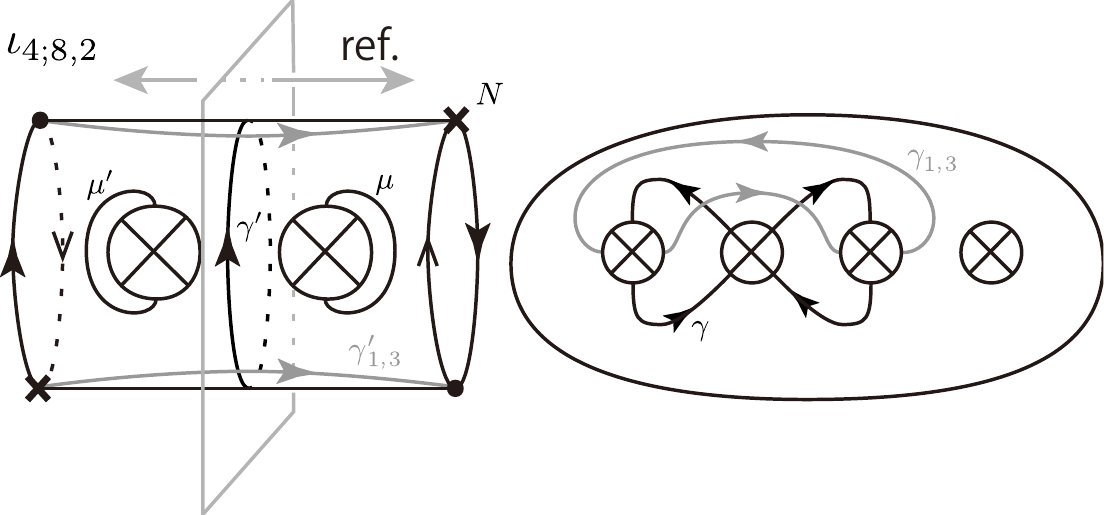}
\caption{Simple closed curves $\gamma ^\prime $, $\gamma _{1,3}^\prime $, $\mu $, and $\mu ^\prime $ on $N$ and a simple closed curve $\gamma $ on $N_4$. }\label{scc_genus4_sgn8-2}
\end{figure}

By the definition of $\iota _{4;8,2}$, we have
\begin{eqnarray*}
\iota _{4;8,2}(\gamma )=\gamma \quad \text{and} \quad  
\iota _{4;8,2}(\gamma _{1,3})=\gamma _{1,3}^{-1}\quad \text{with orientations, }
\end{eqnarray*}
and $\iota _{4;8,2}(\gamma _{1,2,3})=\gamma _{4}$ without orientations. 
We can check that $\bar{b}(2y2\bar{y}\bar{2})(\bar{2}y2)yY_{3,2}(\gamma )=\gamma $ and $\bar{b}(2y2\bar{y}\bar{2})(\bar{2}y2)yY_{3,2}(\gamma _{1,3})=\gamma _{1,3}^{-1}$ with orientations, and $\bar{b}(2y2\bar{y}\bar{2})(\bar{2}y2)yY_{3,2}(\gamma _{1,2,3})=\gamma _{4}$ without orientations. 
Thus, by Proposition~\ref{alexander_method}, we have $\bar{b}2y2\bar{y}\bar{2}\bar{2}y2yY_{3,2}=\iota _{4;8,2}$ in $\M (N_4)$. 
Remark that $y2\bar{y}$ commutes with $\bar{2}$. 
By Lemma~\ref{rel_y_ij}, the relation $Y_{3,2}=121y\bar{1}\bar{2}\bar{1}$ holds in $\M (N_4)$. 
Thus we have
\begin{eqnarray*}
\iota _{4;8,2}&=&\bar{b}2y2\bar{y}\bar{2}\bar{2}y2yY_{3,2}=\bar{b}2\underline{y2\bar{y}\bar{2}\bar{2}}y2y\cdot 121y\bar{1}\bar{2}\bar{1}\\
&\stackrel{\text{DIS}}{=}&\bar{b}\underline{2\bar{2}}\bar{2}y2\underline{\bar{y}y}2y121y\bar{1}\bar{2}\bar{1}\\
&=&\underline{\bar{b}\bar{2}}y22y121\underline{y\bar{1}}\bar{2}\bar{1}\\
&\stackrel{\text{DIS}}{=}&\bar{2}\bar{b}y22y\underline{121}1y\bar{2}\bar{1}\\
&\stackrel{\text{BR}}{=}&\bar{2}\bar{b}y22y2\underline{121}y\bar{2}\bar{1}\\
&\stackrel{\text{BR}}{=}&\underline{\bar{2}}\bar{b}(y2^2)^212y\bar{2}\bar{1}\\
&\stackrel{\text{CONJ}}{\to }&\bar{b}(y2^2)^212y\underline{\bar{2}\bar{1}\bar{2}}\\
&\stackrel{\text{BR}}{=}&\bar{b}(y2^2)^212\underline{y\bar{1}}\bar{2}\bar{1}\\
&\stackrel{\text{BR}}{=}&\bar{b}(y2^2)^2121y\bar{2}\bar{1}
\end{eqnarray*}
in $\M (N_4)$. 
Thus we obtain the \DC \ $\bar{b}(y2^2)^2121y\bar{2}\bar{1}$ for $\iota _{4;8,2}$.

\subsubsection{A \DC \ for  $\iota _{4;9,1}$}\label{section_DC_4-9-1}

We construct a \DC \ for $\iota _{4;9,1}$. 
Let $\iota _{4;9,1}$ be the involution on $N_4$ which is induced by the antipodal action on $N_4$ as in Figure~\ref{scc_genus4_sgn9-1} (in this case, $\iota _{4;9,1}$ is already an involution on $N_4$). 
Remark that $\iota _{4;9,1}$ has no fixed point. 
We regard the simple closed curves $\beta $ and $\gamma _{i}$ $(i=1,2,3,4)$ as the simple closed curves on $N_4$ in Figure~\ref{scc_genus4_sgn9-1}. 

\begin{figure}[ht]
\includegraphics[scale=1.02]{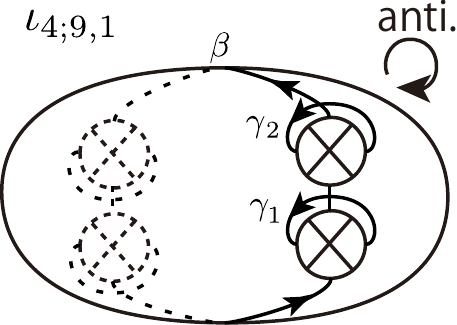}
\caption{Simple closed curves $\beta $ and $\gamma _{i}$ $(i=1,2,3,4)$  on $N_4$. }\label{scc_genus4_sgn9-1}
\end{figure}

By the definition of $\iota _{4;9,1}$, we have
\begin{eqnarray*}
\iota _{4;9,1}(\beta )=\beta , \quad  
\iota _{4;9,1}(\gamma _{i})=\gamma _{i+2}^{-1}\quad (i=1,2),\quad  
\iota _{4;9,1}(\gamma _{3})=\gamma _{1}^{-1}.
\end{eqnarray*}
We can check that $(1\bar{y})23(y2\bar{y}\cdot \bar{2})\bar{1}\bar{2}\bar{3}(\beta )=\beta $, $(1\bar{y})23(y2\bar{y}\cdot \bar{2})\bar{1}\bar{2}\bar{3}(\gamma _{i})=\gamma _{i+2}^{-1}$ $(i=1,2)$, and $(1\bar{y})23(y2\bar{y}\cdot \bar{2})\bar{1}\bar{2}\bar{3}(\gamma _{3})=\gamma _{1}^{-1}$. 
Thus, by Proposition~\ref{alexander_method}, we have 
\begin{eqnarray*}
\iota _{4;9,1}&=&1\bar{y}23y2\bar{y}\underline{\bar{2}\bar{1}\bar{2}}\bar{3}\\
&\stackrel{\text{BR}}{=}&1\bar{y}23y2\underline{\bar{y}\bar{1}}\bar{2}\underline{\bar{1}\bar{3}}\\
&\stackrel{\text{DIS}}{=}&1\bar{y}23y21\underline{\bar{y}\bar{2}\bar{3}\bar{1}}\\
&\stackrel{\text{CONJ}}{\to }&\bar{y}\bar{2}\bar{3}\bar{y}23y21
\end{eqnarray*}
in $\M (N_4)$. 
Thus we obtain the \DC \ $\bar{y}\bar{2}\bar{3}\bar{y}23y21$ for $\iota _{4;9,1}$.

\subsubsection{A \DC \ for  $\iota _{4;9,3}$}\label{section_DC_4-9-2}

We construct a \DC \ for $\iota _{4;9,3}$. 
Let $N$ be the surface which is homeomorphic to $N_4$ as on the left-hand side in Figure~\ref{scc_genus4_sgn9-2} and $\iota _{4;9,3}$ the involution on $N$ which is induced by the antipodal action on $N$. 
Remark that $\iota _{4;9,3}$ has no fixed point. 
Let $\alpha _i^\prime $ $(i=2, 3)$ and $\gamma ^\prime $ be simple closed curves on $N$ which are preserved by $\iota _{4;9,3}$ setwise as in Figure~\ref{scc_genus4_sgn9-2}, and $\gamma $ a simple closed curve on $N_4$ as on the right-hand side in Figure~\ref{scc_genus4_sgn9-2}. 
Then there exists a homeomorphism $F\colon N\to N_4$ such that $F(\alpha _i^\prime )=\alpha _i$ $(i=2,3)$ and $F(\gamma ^\prime )=\gamma $. 
Since the composition $F\circ \iota _{4;9,3} \circ F^{-1}$ is \topconj to $\iota _{4;9,3}$, we regard $F\circ \iota _{4;9,3} \circ F^{-1}$ as $\iota _{4;9,3}$. 

\begin{figure}[ht]
\includegraphics[scale=1.02]{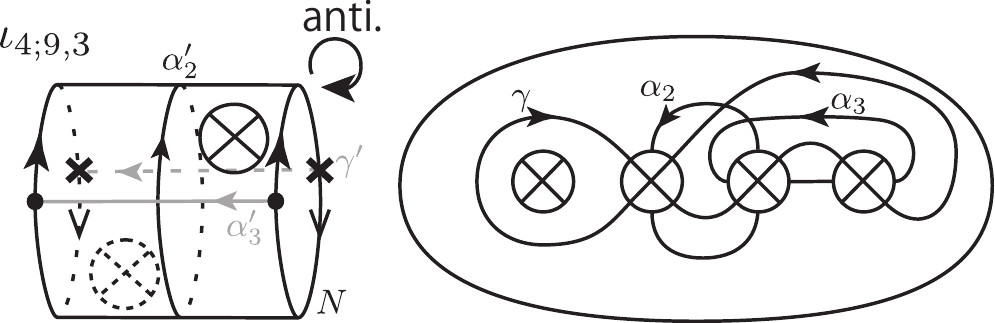}
\caption{Simple closed curves $\alpha _i^\prime $ $(i=2, 3)$ and $\gamma ^\prime $ on $N$ and a simple closed curve $\gamma $ on $N_4$. }\label{scc_genus4_sgn9-2}
\end{figure}

By the definition of $\iota _{4;9,3}$, we have
\begin{eqnarray*}
\iota _{4;9,3}(\alpha _2)=\alpha _2, \quad  
\iota _{4;9,3}(\alpha _3)=\gamma ^{-1},\quad \text{and}\quad  
\iota _{4;9,3}(\gamma )=\alpha _3^{-1}\quad \text{with orientations},
\end{eqnarray*}
and $\iota _{4;9,3}(\mu )=\gamma _{2,3,4}$ without orientations. 
We can check that $\bar{y}(\bar{2}y2)Y_{2,3}(\bar{2}\cdot y2\bar{y})\bar{b}(\alpha _2)=\alpha _2$, $\bar{y}(\bar{2}y2)Y_{2,3}(\bar{2}\cdot y2\bar{y})\bar{b}(\alpha _3)=\gamma ^{-1}$, and $\bar{y}(\bar{2}y2)Y_{2,3}(\bar{2}\cdot y2\bar{y})\bar{b}(\gamma )=\alpha _3^{-1}$ with orientations, and $\bar{y}(\bar{2}y2)Y_{2,3}(\bar{2}\cdot y2\bar{y})\bar{b}(\mu )=\gamma _{2,3,4}$ without orientations. 
Thus, by Proposition~\ref{alexander_method}, we have $\bar{y}\bar{2}y2Y_{2,3}\bar{2}y2\bar{y}\bar{b}=\iota _{4;9,3}$ in $\M (N_4)$. 
By Lemma~\ref{rel_y_ij}, the relation $Y_{2,3}=12\bar{y}\bar{2}\bar{1}$ holds in $\M (N_4)$. 
Thus we have
\begin{eqnarray*}
\iota _{4;9,3}&=&\bar{y}\bar{2}y2Y_{2,3}\bar{2}y2\bar{y}\bar{b}=\bar{y}\bar{2}y\underline{2\cdot 12}\bar{y}\underline{\bar{2}\bar{1}\cdot \bar{2}}y2\bar{y}\bar{b}\\
&\stackrel{\text{BR}}{=}&\bar{y}\bar{2}\underline{y1}21\bar{y}\bar{1}\bar{2}\underline{\bar{1}y}2\bar{y}\bar{b}\\
&=&\bar{y}\bar{2}\bar{1}y21\bar{y}\bar{1}\bar{2}y12\bar{y}\bar{b}\\
\end{eqnarray*}
in $\M (N_4)$. 
Thus we obtain the \DC \ $\bar{y}\bar{2}\bar{1}y21\bar{y}\bar{1}\bar{2}y12\bar{y}\bar{b}$ for $\iota _{4;9,3}$.

\subsubsection{\DC s for $\iota _{4;10}$ and $\iota _{5;7}$}\label{section_DC_4-10_5-7}

We construct a \DC \ for $\iota _{4;10}$, and give a \DC \ for $\iota _{5;7}$ from the \DC \ for $\iota _{4;10}$ by a blowup at a fixed point which lies in a \refline \ of $\iota _{4;10}$. 
Let $N$ be the surface which is homeomorphic to $N_4$ as on the left-hand side in Figure~\ref{scc_genus4_sgn10} and $\iota _{4;10}$ the involution on $N$ which is induced by the $\pi $-rotation of $N$ on the gray axis. 
The involution $\iota _{4;10}$ has one two-sided \refline . 
Let $x_{0}^\prime $ be a fixed point of $\iota _{4;10}$ which lies in the two-sided \refline , $\alpha _i^\prime $ $(i=1,2,3)$ and $\gamma ^\prime $ simple closed curves on $N$ which are preserved by $\iota _{4;10}$ setwise as on the left-hand side in Figure~\ref{scc_genus4_sgn10}, and $\gamma $ a simple closed curve on $N_4$ as on the right-hand side in Figure~\ref{scc_genus4_sgn10}. 
Then there exists a homeomorphism $F\colon N\to N_4$ such that $F(\alpha _i^\prime )=\alpha _i$ $(i=1,2,3)$, $F(\gamma ^\prime )=\gamma $, and $F(x_{0}^\prime )=x_{0}$, respectively. 
Since the composition $F\circ \iota _{4;10} \circ F^{-1}$ is \topconj to $\iota _{4;10}$, we regard $F\circ \iota _{4;10} \circ F^{-1}$ as $\iota _{4;10}$. 

\begin{figure}[ht]
\includegraphics[scale=1.1]{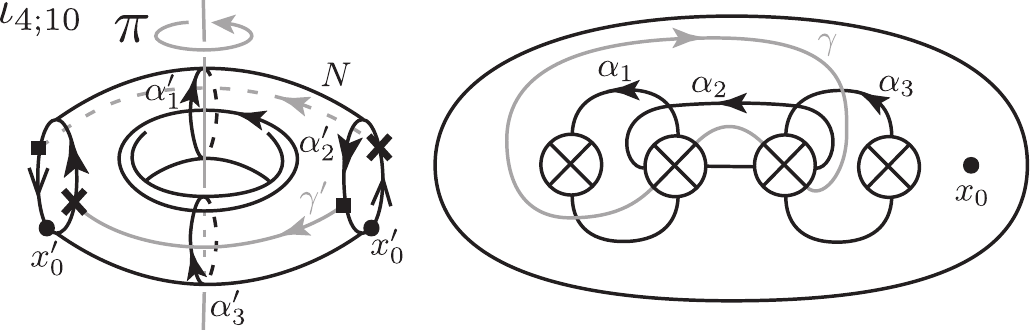}
\caption{Simple closed curves $\alpha _i^\prime $ $(i=1,2,3)$ and $\gamma ^\prime $ on $N$, a point $x_{0}^\prime $ in $N$, and a simple closed curve $\gamma $ on $N_4$. 
The surface $N$ is obtained from the torus with two boundary components by identifying the boundary along the oriented edges as on the left-hand side.}\label{scc_genus4_sgn10}
\end{figure}

By the definition of $\iota _{4;10}$, we have
\begin{eqnarray*}
\iota _{4;10}(\alpha _1)=\alpha _3^{-1},\quad  
\iota _{4;10}(\alpha _2)=\alpha _2, \quad  
\iota _{4;10}(\alpha _3)=\alpha _1^{-1},\quad  
\iota _{4;10}(\gamma )=\gamma ^{-1} , 
\end{eqnarray*}
and $\iota _{4;10}(x_0)=x_0$. 
Hence we regard $\iota _{4;10}$ as an element of $\M (N_4,x_0)$.
We can check that $\bar{b}\Delta (l_1^{-1})\cdot \bar{3}\bar{2}\bar{y}23\cdot y2\bar{y}\cdot 32\bar{1}\bar{2}\bar{3}(\alpha _1)=\alpha _3^{-1}$, $\bar{b}\Delta (l_1^{-1})\cdot \bar{3}\bar{2}\bar{y}23\cdot y2\bar{y}\cdot 32\bar{1}\bar{2}\bar{3}(\alpha _2)=\alpha _2$, $\bar{b}\Delta (l_1^{-1})\cdot \bar{3}\bar{2}\bar{y}23\cdot y2\bar{y}\cdot 32\bar{1}\bar{2}\bar{3}(\alpha _3)=\alpha _1^{-1}$, and $\bar{b}\Delta (l_1^{-1})\cdot \bar{3}\bar{2}\bar{y}23\cdot y2\bar{y}\cdot 32\bar{1}\bar{2}\bar{3}(\gamma )=\gamma ^{-1}$ relative to $x_0$, respectively. 
The complement of $\alpha _1\cup \alpha _2 \cup \alpha _3 \cup \gamma $ in $N_4$ is a disjoint union of two disks and the point $x_0$ lies in the interior of one of the disks. 
Since the mapping classes $\iota _{4;10}$ and $\bar{b}\Delta (l_1^{-1})\cdot \bar{3}\bar{2}\bar{y}23\cdot y2\bar{y}\cdot 32\bar{1}\bar{2}\bar{3}$ fix $x_0$, they preserve each connected component of $N_4-\alpha _1\cup \alpha _2 \cup \alpha _3 \cup \gamma $. 
Thus, by Proposition~\ref{alexander_method}, we have $\bar{b}\Delta (l_1^{-1})\bar{3}\bar{2}\bar{y}23y2\bar{y}32\bar{1}\bar{2}\bar{3}=\iota _{4;10}$ in $\M (N_4,x_{0})$. 
Remark that $\bar{2}\bar{y}2\cdot 3\cdot y2\bar{y}\cdot 3(\alpha _2)=\alpha _3$ and $\bar{y}23\cdot y2\bar{y}(\alpha _3)=\alpha _2$ with orientations of regular neighborhoods of curves. 
Thus we have 
\begin{eqnarray*}
\iota _{4;10}&=&\bar{b}\Delta (l_1^{-1})\bar{3}\underline{\bar{2}\bar{y}23y2\bar{y}32}\bar{1}\bar{2}\bar{3}\\
&=&\bar{b}\Delta (l_1^{-1})\underline{\bar{3}3}\bar{2}\bar{y}23y2\bar{y}3\bar{1}\bar{2}\bar{3}\\ 
&=&\bar{b}\Delta (l_1^{-1})\bar{2}\underline{\bar{y}23y2\bar{y}3}\bar{1}\bar{2}\bar{3}\\
&=&\bar{b}\Delta (l_1^{-1})\underline{\bar{2}2}\bar{y}23y2\bar{y}\bar{1}\bar{2}\bar{3}\\
&=&\bar{b}\Delta (l_1^{-1})\bar{y}23y2\bar{y}\bar{1}\bar{2}\bar{3}
\end{eqnarray*}
in $\M (N_4,x_0)$. 
Using the forgetful homomorphism, we obtain the \DC \ $\bar{b}\bar{y}23y2\bar{y}\bar{1}\bar{2}\bar{3}$ for $\iota _{4;10}$. 

By an argument in the proof of Proposition~\ref{prop_blowup_even-odd} (see also Figure~\ref{table_blowup_genus4-5}), the involution on $N_5$ which is obtained from $\iota _{4;10}$ by the blowup at the fixed point $x_0$ which lies in a \refline \ of $\iota _{4;10}$ is \topconj to $\iota _{5;7}$. 
By Lemma~\ref{rel_y_5i}, the mapping class $\Phi (\Delta (l_1^{-1}))=\Psi (l_1^{-1})\in \M (N_5)$ coincides with the crosscap slide $\overline{Y_{5,1}}$. 

By Lemma~\ref{rel_y_ij}, the relation $\overline{Y_{5,1}}=\bar{4}\bar{3}\bar{2}\bar{1}y1234$ holds in $\M (N_5)$. 
Thus we have 
\begin{eqnarray*}
\iota _{5;7}&=&\Phi (\iota _{4;10})=\Phi (\bar{b}\Delta (l_1^{-1})\bar{y}23y2\bar{y}\bar{1}\bar{2}\bar{3})\\
&=&\bar{b}\overline{Y_{5,1}}\bar{y}23y2\bar{y}\bar{1}\bar{2}\bar{3}\\
&=&\bar{b}\cdot \bar{4}\bar{3}\bar{2}\bar{1}y1234\cdot \bar{y}23y2\bar{y}\bar{1}\bar{2}\bar{3}\\
\end{eqnarray*}
Therefore, we obtain the \DC \ $\bar{b}\bar{4}\bar{3}\bar{2}\bar{1}y1234\bar{y}23y2\bar{y}\bar{1}\bar{2}\bar{3}$ for $\iota _{5;7}$.


\par
{\bf Acknowledgement:} The authors would like to express their gratitude to Susumu Hirose, for his encouragement and helpful advices. 
The first author was supported by JSPS KAKENHI Grant Number JP19K23409 and JP21K13794.
The second author was supported by MEXT Grants-in-Aid for Scientific Research on Innovative Areas (JP17H06460 and JP17H06463).
This study was partially supported by JST CREST Grant Number JPMJCR17J4.


\begin{thebibliography}{99}
\bibitem{Birman}
J. S. Birman, \emph{Mapping class groups and their relationship to braid groups}, Comm. Pure Appl. Math. \textbf{22} (1969), 213--238.

\bibitem{Birman-Chillingworth} 
J. S. Birman, D. R. J. Chillingworth, \emph{On the homeotopy group of a non-orientable surface}, Proc. Camb. Philos. Soc. \textbf{71} (1972), 437--448.

\bibitem{Birman-Hilden}
J. S. Birman, H. M. Hilden, \emph{On the mapping class groups of closed surfaces as covering spaces},
Advances in the Theory of Riemann Surfaces (Proc. Conf., Stony Brook, N.Y., 1969), Ann. of Math. Studies 66, Princeton Univ. Press, Princeton, N.J., 81--115, 1971.

\bibitem{BCCS}
E. Bujalance, F. J. Cirre, M. D. E. Conder, B. Szepietowski, \emph{Finite group actions on bordered surfaces of small genus}, J. Pure Appl. Algebra \textbf{214} (2010), no. 12, 2165--2185.

\bibitem{BEMS}
E. Bujalance, J. J. Etayo, E. Mart\'{i}nez, B. Szepietowski, \emph{On the connectedness of the branch loci of non-orientable unbordered Klein surfaces of low genus}, Glasg. Math. J. \textbf{57} (2015), 211--230.

\bibitem{Dehn}
M. Dehn, \emph{Die Gruppe der Abbildungsklassen}, Acta Math. \textbf{69} (1938), 135--206.

\bibitem{Dugger}
D. Dugger, \emph{Involutions on surfaces}, J. Homotopy Relat. Struct. \textbf{14} (2019), 919--992.

\bibitem{Endo-Nagami}
H. Endo, S. Nagami, \emph{Signature of relations in mapping class groups and non-holomorphic Lefschetz fibrations}, Trans. Amer. Math. Soc. \textbf{357} (2005), 3179--3199.

\bibitem{Evans-Kolbe}
M. E. Evans, B. Kolbe, \emph{Isotopic tiling theory for hyperbolic surfaces}, Geom. Dedicata \textbf{212} (2021), 177--204.
\bibitem{Farb-Margalit}
B. Farb, D. Margalit, \emph{A primer on mapping class groups}, Princeton University Press, Princeton, NJ, 2012.

\bibitem{Gurtas}
Y. Gurtas, \emph{Positive Dehn twist expressions for some new involutions in the mapping class group II}, 
arXiv:math.GT/0404311.

\bibitem{Hirose}
S. Hirose, \emph{Presentations of periodic maps on oriented closed surfaces of genera up to 4}, 
Osaka J. of Mathematics \textbf{47} (2010), 385--421.

\bibitem{Hirose2}
S. Hirose, \emph{Generators for the mapping class group of a nonorientable surface}, Kodai Math. J. \textbf{41} (2018), 154--159.

\bibitem{Ishizaka}
M. Ishizaka, \emph{Presentation of hyperelliptic periodic monodromies and splitting families}, Rev. Mat. Complut. \textbf{20} (2007), 483--495.

\bibitem{Karrass-Magnus-Solitar}
A. Karrass, W. Magnus, D. Solitar, \emph{Elements of finite order in groups with a single defining relation}, Comm. Pure Appl. Math. \textbf{13} (1960), 57--66.

\bibitem{Korkmaz1}
M. Korkmaz, \emph{Noncomplex smooth 4-manifolds with Lefschetz fibrations}, Internat. Math. Res. Notices, (2001), 115--128.

\bibitem{Korkmaz2}
M. Korkmaz, \emph{Mapping class groups of nonorientable surfaces}, Geom. Dedicata. \textbf{89} (2002), 109--133.

\bibitem{Lickorish1} 
W. B. R. Lickorish, \emph{Homeomorphisms of non-orientable two-manifolds}, Proc. Camb. Philos. Soc. \textbf{59} (1963), 307--317.

\bibitem{Macbeath} 
A. M. Macbeath, \emph{The classification of non-euclidean plane crystallographic groups}, Canadian J. Math. \textbf{19} (1967), 1192--1205.

\bibitem{Matsumoto}
Y. Matsumoto, \emph{Lefschetz fibrations of genus two --- a topological approach}, 
in Topology and Teichmüller Spaces (Katinkulta, 1995), World Sci. Publ., River Edge, NJ., 123–148, 1996.

\bibitem{Murasugi}
K. Murasugi, \emph{The center of a group with a single defining relation}, Math. Ann. \textbf{155} (1964), 246--251. 

\bibitem{Nielsen}
J. Nielsen, \emph{Die Struktur periodischer Transformationen von Fl\"{a}chen}, Math. -fys. Medd. Danske Vid. Selsk. \textbf{15} (1937) (English transl. in ``Jakob Nielsen collected works, Vol. 2'', 65--102).

\bibitem{Nielsen2}
J. Nielsen, \emph{Abbildungsklassen endlicher Ordnung}, Acta Math. \textbf{75} (1943), 23--115.

\bibitem{Omori}
Genki Omori, \emph{A positive factorization for the balanced superelliptic rotation}, arXiv: 2301.07924.

\bibitem{Stukow}
M. Stukow, \emph{A finite presentation for the mapping class group of a nonorientable surface with Dehn twists and one crosscap slide as generators}, J. Pure Appl. Algebra \textbf{218} (2014), 2226--2239.

\bibitem{Stukow2}
M. Stukow, \emph{A finite presentation for the hyperelliptic mapping class group of a nonorientable surface}, Osaka J. Math. \textbf{52} (2015), 495--514.

\bibitem{Szepietowski1} 
B. Szepietowski, \emph{Crosscap slides and the level 2 mapping class group of a nonorientable surface}, Geom. Dedicata \textbf{160} (2012), 169--183.

\bibitem{Szepietowski2} 
B. Szepietowski, \emph{A finite generating set for the level 2 mapping class group of a nonorientable surface}, Kodai Math. J. \textbf{36} (2013), 1--14.

\bibitem{Wilkie} 
H. C. Wilkie, \emph{On non-Euclidean crystallographic groups}, Math. Zeitschr. \textbf{91} (1966), 87--102.

\end{thebibliography}
\end{document}